\newtheorem{theorem}{Theorem}
\newtheorem{lemma}[theorem]{Lemma}
\newtheorem{proposition}[theorem]{Proposition}
\newtheorem{corollary}[theorem]{Corollary}
\newtheorem{definition}[theorem]{Definition}
\newtheorem{remark}[theorem]{Remark}
\newtheorem{example}[theorem]{Example}
\theoremstyle{plain}
\begin{document}

\title[Space-Dependent Fractional Evolution Equations]{Space-Dependent Fractional Evolution Equations: A New Approach}


\author[T. A. S. Boza]{Tiago A. S. Boza}
\address[Tiago A. S. Boza]{Department of Mathematics, Federal University of Santa Catarina, Florian\'{o}polis - SC, Brazil\vspace*{0.3cm}}
\email[]{boza.tiago@ufsc.br}
\author[P. M. Carvalho-Neto]{Paulo M. Carvalho-Neto}
\address[Paulo M. de Carvalho Neto]{Department of Mathematics, Federal University of Santa Catarina, Florian\'{o}polis - SC, Brazil\vspace*{0.3cm}}
\email[]{paulo.carvalho@ufsc.br}


\subjclass[2020]{26A33, 35R11, 35A01, 35A02, 35D30, 35Q92}


\keywords{Fractional derivatives and integrals; Fractional partial differential equations; Existence and Uniqueness problems for PDEs; Space-dependent fractional derivatives and integrals}


\begin{abstract}
Inspired by the works of \cite{baz2} and \cite{kian}, this study develops an abstract framework for analyzing differential equations with space-dependent fractional time derivatives and bounded operators. Within this framework, we establish existence and uniqueness results for solutions in both linear and semilinear settings. Our findings provide deeper insights into how spatially varying fractional derivatives influence the behavior of differential equations, shedding light on their mathematical properties and potential applications.
\end{abstract}

\maketitle


\section{Introduction}

The research on ordinary and partial differential equations with time fractional derivatives has attracted significant attention over the past few decades. An excellent reference on some of the various contributions that fractional derivatives make to science in general, and particularly in the fields of mathematical physics, engineering, and biology, can be found in \cite{car}. In this article, Carcione et al. investigate several applications of mathematical formalisms, highlighting, for example, the physical relevance that justifies the study of fractional diffusion equations. According to Carcione et al., these equations were initially introduced by Hilfer, Anton, and Compte in the context of a continuous-time random walk model \cite{compte,hilfer}.

Recently, the study of kinetic equations with fractional time derivatives has become an important tool for describing anomalous diffusion and relaxation phenomena. According to Metzler and Klafter in \cite{metzler}, such fractional kinetic equations for diffusion, diffusion-advection, and Fokker-Planck-like processes are presented as a useful approach to describing the transport dynamics in complex systems governed by anomalous diffusions and non-exponential relaxation patterns.

Within the scope of these studies, \cite{metzler} offers an intuitive differentiation between classical and anomalous diffusion processes. For example, in classical diffusion processes like the heat equation, the mean square displacement of a particle is proportional to $t$, while in an anomalous diffusion process like the fractional heat equation, it is proportional to $t^{\alpha}$. Furthermore, the authors provide a physical justification for the need to deduce a "better" differential equation capable of describing certain anomalous behaviors. In their studies, they conclude that the fractional derivative (Riemann-Liouville or Caputo) would be a more suitable choice.

We can, therefore, inquire about the advantages of adopting an approach based on fractional differential equations for diffusion processes. In this regard, the authors in \cite{metzler} summarizes these advantages in the following points:

\begin{itemize}
	\item[(i)] they respond more appropriately to the typical anomalous characteristics that are experimentally observed in many systems;
	\item[(ii)] we can directly include external force terms in the equation;
	\item[(iii)] standard techniques for solving partial differential equations or calculating related transport moments also apply to fractional equations;
	\item[(iv)] the appearance of fractional equations is very appealing due to their proximity to the usual first-order equations;
	\item[(v)] it is not just a new way to present ideas but rather a framework that proves useful for interpreting many other problems and situations.
\end{itemize}

Autonomous Cauchy problems involving fractional diffusion equations of constant order \eqref{equation diffusion constant}, that is,
\begin{equation}\tag{CO} \label{equation diffusion constant}
D_t^{\alpha} u(t, x) = K\left[\dfrac{\partial^2 u(t, x)}{\partial x^2}\right], \quad 0 <\alpha< 1,
\end{equation}
have been extensively studied in the mathematical literature, as mentioned, for example, by \cite{kian, sun}, and other classical references cited by them. Furthermore, recent research, including studies by \cite{goulart, liu, makris}, has once more confirmed the accuracy of using fractional differential equations like \eqref{equation diffusion constant} to analyze experimental data, demonstrating their effectiveness in capturing anomalous diffusion effects. These findings underscore the significance of this theory and provide valuable insights into its application in enhancing our understanding of natural phenomena.

On the other hand, it is necessary to conduct additional theoretical and numerical investigations to incorporate suitable tools for describing more realistic diffusion processes. For example, it is essential to consider stochastic diffusion, as discussed in \cite{chechkin}, since there are various physical, biological, and physiological diffusion phenomena where equations of the type \eqref{equation diffusion constant} are not capable of adequately characterizing, as mentioned by \cite{sun}.

Moreover, according to \cite{fedotov}, fractional dynamic models \eqref{equation diffusion constant} are not sufficiently robust to deal with such problems, especially in heterogeneous media, where permeability may vary at different spatial positions. In such cases, the variable-order model \eqref{equation diffusion space dependentt}, that is,
\begin{equation}\tag{VO} \label{equation diffusion space dependentt}
D_t^{\alpha (t,x,u(t,x))} u(t, x) = K\left[\dfrac{\partial^2 u(t, x)}{\partial x^2}\right], \quad 0 <\alpha (t,x,y) < 1,
\end{equation}
seems to be the most suitable approach to correctly describe diffusion processes. In fact, in \cite{west}, it is observed that some phenomena exhibit complex characteristics in terms of analysis, and diffusion behaviors may depend on the evolution of time, spatial variation, or, in more complex cases, even on system parameters.

However, although these considerations emphasize the importance of exploring more comprehensive and flexible approaches to studying diffusion processes, taking into account the specific characteristics of each problem and the nature of the media in which they occur significantly increases the difficulties. As pointed out by \cite{kian}, such studies are generally not available in the literature.

For this reason, our final goal is to develop a robust theory for problems of the type \eqref{equation diffusion space dependentt}, focusing on the specific subcase of space-dependent equations \eqref{equation diffusion concentration dependent1}, that is,
\begin{equation}\tag{SD} \label{equation diffusion concentration dependent1}
D_t^{\alpha(x)} u(t, x) = A u(t, x), \quad 0 < \alpha(x) < 1,
\end{equation}
where $A:D(A)\subset X\rightarrow X$ is a linear operator on a Banach space $X$, which are equations that consider that the variation in the order of differentiation depends only on the position $x$ considered.

Throughout our studies, we encountered numerous technical difficulties that rendered the theory both delicate and extensive, potentially making the paper excessively long and difficult to follow. With this in mind, we chose to first address the case with bounded operators, which, while seemingly more manageable, still presented challenges. Additionally, this work serves as a foundation for developing inequalities and tools that will be useful in handling the case of unbounded operators, to be explored in a future paper. 

Thus, the aim of this work is to present a rigorous and abstract framework that establishes the conditions for existence and uniqueness of solutions to abstract Cauchy problems with bounded operators in an appropriate Banach space $X$, as given by \eqref{equation diffusion concentration dependent1}, in both linear and semilinear cases.

\subsection{Motivation and Challenges in Solving Space-Dependent Fractional Differential Equations} 

Here, we aim to highlight the complexities that may arise during the study of differential equations such as \eqref{equation diffusion concentration dependent1}. Let us consider the Cauchy problem
\begin{equation} \label{ode} 
\begin{array}{ll}
cD_t^{\alpha(x)} u(t, x) = \lambda(x)u(t, x), \quad \forall t \geq 0\textrm{ and }\forall x \in [0, L], \\
u(0, x) = u_0(x), \quad \forall x \in [0, L], 
\end{array}
\end{equation}
where $L > 0$, $\alpha \in B\big([0, L]\big)$ (the space of bounded real functions on $[0, L]$) such that $\alpha(x) \in (0, 1)$ for every $x \in [0, L]$, $\lambda, u_0 \in C\big([0, L]\big)$ (the space of continuous real functions on $[0, L]$), and $cD_t^{\alpha(x)}$ denotes the Caputo fractional derivative of order $\alpha(x)$ (for more details see Section \ref{fracintro}).  Then, notice that for every \( x \in [0, L] \) fixed, from the theory of fractional calculus and differential equations, we can deduce that a solution of \eqref{ode} is given by
\begin{equation*}u(t,x) = E_{\alpha(x)}\big(t^{\alpha(x)}\lambda(x)\big)u_0(x).\end{equation*}

It would be expected that by adapting known methods of solving partial differential equations, we should obtain an analogous approach to find the solution of partial differential equations with space-dependent fractional derivative, but this is not the case.

For instance, let us try to adapt the standard method of separation of variables to solve the fractional diffusion space-dependent version of the 1D heat equation. To that end, consider \( L > 0 \) and assume that \( \alpha \in B\big([0, L]\big) \) is such that $\alpha(x)\in(0,1)$ for every $t\in[0,L]$, \( u_0 \in C\big([0, L]\big) \), and that \( cD_t^{\alpha(x)} \) denotes the Caputo fractional derivative of order \(\alpha(x)\). With this, we may consider 
\begin{equation} \label{1dheatequation} 
\begin{array}{ll}
cD_t^{\alpha(x)} u(t, x) = u_{xx}(t, x), \quad \forall t \geq 0\textrm{ and } \forall x \in [0, L], \\
u(0, x) = u_0(x), \quad \forall x \in [0, L], \\
u(t, 0) = u(t, L) = 0, \quad \forall t \geq 0.
\end{array}
\end{equation}

If we try to find all nontrivial solutions of the partial differential equation \eqref{1dheatequation} in the product form \( u(t,x) = f(t)g(x) \), by substituting this form into the equations, we obtain that
\begin{equation}\label{sepvar01} 
\dfrac{cD_t^{\alpha(x)}f(t)}{f(t)} = \dfrac{g^{''}(x)}{g(x)}.
\end{equation}

Note that in the classical situation, the expression above would be equal to a constant, as each side would depend on distinct variables. However, in this case, both sides of the equation continue to depend on $x$. Hence, if we assume that they are equivalent to an $x$-dependent function, such as \(\lambda(x)\), we would obtain from \eqref{1dheatequation} and \eqref{sepvar01} the following two differential equations:
\begin{subequations}
  \begin{align}
  \label{eq1.1}
  &cD_t^{\alpha(x)} f(t) = \lambda(x)f(t), \quad \forall t \geq 0\textrm{ and } \forall x \in [0, L], \\
  \label{eq2.1}
  &g^{''}(x) - \lambda(x) g(x) = 0,\quad \forall x \in [0, L].
\end{align}
\end{subequations}

Due to the boundary condition $u(t, 0) = u(t, L) = 0$, which leads to the equalities $f(t)g(0) = 0$ and $f(t)g(L) = 0$, and because we are interested in nontrivial solutions, it follows that $g(0) = g(L) = 0$. Therefore, from \eqref{eq2.1} we conclude that we are seeking nontrivial solutions to the second-order boundary-value problem
\begin{equation}\label{ode2dim}
\begin{array}{ll}
g^{''}(x) - \lambda(x) g(x) = 0,\quad \forall x \in [0, L], \\
g(0)= g(L)=0.
\end{array}
\end{equation}

On the other hand, even if we consider that the variable $x$ is fixed in equation \eqref{eq1.1}, it is not straightforward to conclude that, as in problem \eqref{ode}, the following holds:
\begin{equation} \label{aux02}f(t) = E_{\alpha(x)}\big(t^{\alpha(x)}\lambda(x)\big).\end{equation}

In fact, the general forms of \(\alpha(x)\) and \(\lambda(x)\) are too broad to ensure that the right-hand side of \eqref{aux02} is independent of $x$, as this independence is necessary for the separation of variables in \eqref{sepvar01}; otherwise, the separation of variables of the equation would be compromised.

To address this problem in our setup, one approach is to appropriately specify \(\alpha(x)\) to ensure better control over these equations. That said, let us assume that 
$$\alpha(x)=\left\{\begin{array}{ll}\alpha_1,&\textrm{ for }x\in[0,L/2],\\
\alpha_2,&\textrm{ for }x\in(L/2,L],\end{array}\right.$$
with $\alpha_1,\alpha_2\in(0,1)$. 

From \eqref{sepvar01}, we conclude that \(\lambda(x)\) is constant within each of the intervals \([0, L/2]\) and \((L/2, L]\). That is, we choose to maintain the same constant value within each interval. Consequently, from \eqref{ode2dim}, with $\lambda_n(x) = -(n\pi)^2/L^2$, we deduce that for each $ n \in \mathbb{N}$, we have
$$g_n(x) = \sin\left(\dfrac{n\pi x}{L}\right),$$
are the desired solutions, identical to the classical case. Now, since from \eqref{aux02} we deduce
$$f(t) = E_{\alpha(x)}\left(\dfrac{-n^2\pi^2 t^{\alpha(x)}}{L^2}\right) = \begin{cases}
E_{\alpha_1}\left(\dfrac{-n^2\pi^2 t^{\alpha_1}}{L^2}\right), & \text{for } x \in [0, L/2], \vspace*{0.2cm}\\
E_{\alpha_2}\left(\dfrac{-n^2\pi^2 t^{\alpha_2}}{L^2}\right), & \text{for } x \in (L/2, L],
\end{cases}$$
 if we can express \(u_0(x)\) as the sine series
$$u_0(x) = \sum_{j=1}^\infty a_j \sin\left(\dfrac{n\pi x}{L}\right),$$
then we obtain
$$u(t,x) = \sum_{j=1}^\infty a_j \sin\left(\dfrac{j\pi x}{L}\right) E_{\alpha(x)}\left(\dfrac{-j^2\pi^2 t^{\alpha(x)}}{L^2}\right),$$
which is a classical solution in $[0,L/2)\cup(L/2,L]$, or in other words, satisfies the first equation of \eqref{1dheatequation} for every $t\geq0$ and almost every \(x \in [0, L]\).

From this construction, two initial conclusions can be drawn. 
\begin{itemize}
\item[(i)] Depending on the choice of \(\alpha(x)\), it is unlikely that the equation can be satisfied at every point in \([0, L]\). Therefore, it is more natural to choose $\alpha \in L^\infty(0, L)$, instead of $\alpha \in B([0, L])$. \vspace*{0.2cm}
\item[(ii)] For non constant \(\alpha(x)\), it is very difficult to choose \(\lambda(x)\) in \eqref{sepvar01} to ensure that \eqref{aux02} is independent of the \(x\) variable. Hence, a general theory is needed to handle the solution of these equations, even if an explicit solution is not trivially possible to be obtained.
\end{itemize}


\subsection{Summary of the Manuscript}

In Section \ref{fracintro}, we introduce classical function spaces and define the Riemann-Liouville fractional integral and derivative of order $\alpha(x)$, along with the Caputo fractional derivative of order $\alpha(x)$. We also prove an important and intricate result (see Theorem \ref{espfunc02}), which serves as the foundation for the inequalities used throughout the paper. Section \ref{linearprob} addresses the linear problem involving bounded operators, which can be treated more explicitly, while Section \ref{last} focuses on the semilinear problem with bounded operators. Finally, Section \ref{applications} offers examples of problems that this theory addresses and suggests potential future work, including the application of numerical methods to enhance understanding of classical models.


\section{Prerequisites on the Theory of Variable Order}
\label{fracintro}

This section introduces key theoretical concepts that enhance the completeness and conciseness of our work. Specifically, we focus on introducing the Riemann-Liouville fractional integral and the Caputo fractional derivative of variable order, along with their respective properties, which are employed throughout this manuscript. In this work we assume that $I$ represents the interval $[0,T]$ or $[0,\infty)$, $\Omega$ is a subset of $\mathbb{R}^n$ for some $n\in\mathbb{N}$, and $\alpha:\Omega\rightarrow[0,1]$. For the remainder of this chapter, we assume that $X$ is an arbitrary Banach space.

Before delving into the definitions of integration and differentiation in the context of variable order, we recall the notations of some classical spaces.

\begin{definition} We refer to the Bochner-Lebesgue spaces $(L^p ({I}; X), \| \cdot \|_{L^p({I}; X)})$, for $1 \leq p \leq \infty$, as the set consisting of all Bochner-measurable functions from ${I} \subset \mathbb{R}$ to $X$ such that the function $t\mapsto\| u (t) \|_{X}$ belongs to $L^p({I})$. Furthermore, its norm is given by:
\begin{equation*}
\| u \|_{L^p({I}; X)} := \left( \displaystyle \int_{{I}} \| u(t) \|_{X}^{p} dt \right)^{\frac{1}{p}}.
\end{equation*}
In this case, $(L^p ({I}; X), \| \cdot \|_{L^p({I}; X)})$ is a Banach space. 
\end{definition}

\begin{definition}  We refer to $C({I}; X)$, as the set consisting of all continuous functions from ${I} \subset \mathbb{R}$ to $X$. If ${I}$ is compact, we can define its norm as:
\begin{equation*}
\| u \|_{C({I}; X)} := \displaystyle \sup_{t \in {I}} \| u(t) \|_{X}.
\end{equation*}
In this case, $(C({I}; X), \| \cdot \|_{C({I};X)})$ is a Banach space.
\end{definition}

\begin{definition}
We denote $C^1({I};X)$ as the set of all continuously differentiable functions from ${I} \subset \mathbb{R}$ to $X$. If ${I}$ is a compact interval, its norm can be defined as follows:
\begin{equation*}
	\| u \|_{C^1({I};X)} := \| u \|_{C({I};X)}+\| u^\prime \|_{C({I};X)}.
\end{equation*}
In this case $(C^1({I},;X), \| \cdot \|_{C({I};X)})$  is a Banach space.
\end{definition}

\begin{definition}
We refer to the Bochner-Sobolev function space as $(W^{p, k} ({I};X), | \cdot |_{W^{p, k}({I}; X)})$ for $1 \leq p \leq \infty$ and $k\in\mathbb{N}$. This space comprises all functions in $L^p ({I};X)$ with weak derivatives of order at most $k$ also belonging to $L^p ({I}; X)$. Its norm is defined as
\begin{equation*}
	\| u \|_{W^{p, k} ({I};X)} := \left( \displaystyle \sum_{n = 0}^{k} \int_{{I}} \| D^n u(t) \|_{X}^{p} dt \right)^{\frac{1}{p}}.
\end{equation*}
Here, the symbol $D^n$ represents the $n$th weak derivative of the function. With this notion, $(W^{p,k} ({I};X), \| \cdot\|_{W^{p, k}({I};X)})$ constitutes a Banach space.
\end{definition}

\begin{remark} In the definitions given above, when $X = \mathbb{R}$, we shall omit $X$ in the notations $L^p (I; X) \), \( C(I; X) \), and \( W^{p, k} (I; X) \), and write instead \( L^p (I) \), \( C(I) \), and \( W^{p, k} (I)$, respectively. The same applies to \( L^p(\Omega) \) for \( 1 \leq p \leq \infty \), which always denotes the space of Lebesgue measurable functions from \(\Omega\) to \(\mathbb{R}\).
\end{remark}

Now we are ready to introduce the differentiation and integration of variable order.

\begin{definition}
The Riemann-Liouville fractional integral of order $\alpha(x)$ of $\varphi:I\times\Omega\rightarrow\mathbb{R}$ is given by
$$J_t^{\alpha(x)}\varphi(t,x):=\left\{\begin{array}{ll}
\dfrac{1}{\Gamma(\alpha(x))}\displaystyle\int_0^t(t-s)^{\alpha(x)-1}\varphi(s,x)ds,&\textrm{if }\alpha(x)\in(0,1],\vspace*{0.2cm}\\
\varphi(t,x),&\textrm{if }\alpha(x)=0.\end{array}\right.$$
for every $(t,x)\in I\times\Omega$ such that the right side of the above identity exists. It worths to emphasize that $\Gamma(\cdot)$ is used here to denote the Gamma function.
\end{definition}

\begin{remark} Recall that if $\varphi\in L^1(0,T;L^1(\Omega))$, then for almost every $x\in\Omega$, the function $\varphi(\cdot,x)$ belongs to $L^1(I)$. Therefore, if $\alpha\in L^1(\Omega)$, then from the classical theory of the Riemann-Liouville fractional integral (cf. \cite[Theorem 2.5]{CaFe1}), we have that $J_t^{\alpha(x)}\varphi(t,x)$ exists for almost every $t\in I$ and almost every $x\in\Omega$.
\end{remark}

To illustrate the definition, consider the functions $\alpha:[0,1]\rightarrow[0,1]$ defined as $\alpha(x)=x$, and $\phi:[0,\infty)\times[0,1]\rightarrow\mathbb{R}$ defined as $\phi(t,x)=t^{1/2}\cos(x)$. Then we have that
$$J_t^{\alpha(x)}\phi(t,x):=\left\{\begin{array}{ll}
\dfrac{\Gamma(3/2)t^{x+(1/2)}\cos(x)}{\Gamma\big(x+(3/2)\big)},&\textrm{if }x\in(0,1]\textrm{ and }t\in[0,\infty),\vspace*{0.2cm}\\
t^{1/2},&\textrm{if }x=0\textrm{ and }t\in[0,\infty).\end{array}\right.$$

Another important point to emphasize is that we can consider $\beta>0$ and define $g_\beta:\mathbb{R}\rightarrow\mathbb{R}$ as
\begin{equation*} g_{\beta}(t)=\left\{\begin{array}{ll}\dfrac{t^{\beta-1}}{\Gamma(\beta)},&\textrm{for }t>0\textrm{ and }\beta>0,\vspace{0.2cm}\\0,&\textrm{for }t\leq0\textrm{ and }\beta>0,\end{array}\right.\end{equation*}
in order to deduce that
$$J_t^{\alpha(x)}\varphi(t,x)=[g_{\alpha(x)}*\varphi(\cdot,x)](t),$$
for every $(t,x)\in I\times\Omega$ such that $\alpha(x)\in(0,1]$ and the fractional integral of order $\alpha(x)$ exists. By using the associativity of convolution and the semigroup property exhibited by the family of functions $\{g_{\beta}(t):\beta\geq0\}$, it follows that if $\alpha_1,\alpha_2:\Omega\rightarrow[0,1]$, then
\begin{equation}\label{convsemi}J_t^{\alpha_1(x)}J_t^{\alpha_2(x)}\varphi(t,x)=J_t^{\alpha_1(x)+\alpha_2(x)}\varphi(t,x),\end{equation}
for every $(t,x)\in I\times\Omega$ where equation \eqref{convsemi} makes sense (cf. \cite[Theorem 3.15]{CaFe1}).

Now, we introduce the first variable-order fractional derivative considered in this paper. It is important to emphasize that this derivative is introduced solely to define the main variable-order fractional derivative used in our theory.

\begin{definition}\label{RLgeral}
The Riemann-Liouville fractional derivative of order $\alpha(x)$ of a function $\varphi:I\times\Omega\rightarrow\mathbb{R}$ is given by
$$D_t^{\alpha(x)}\varphi(t,x):=\dfrac{d}{dt}\left\{J_t^{1-\alpha(x)}\varphi(t,x)ds\right\},$$
for every $(t,x)\in I\times\Omega$ such that the right side of the above identity exists. More explicitly, we may write
$$D_t^{\alpha(x)}\varphi(t,x)=\left\{\begin{array}{ll}\varphi(t,x),&\textrm{if }\alpha(x)=0,\vspace*{0.2cm}\\\dfrac{d}{dt}\left[\dfrac{1}{\Gamma\big(1-\alpha(x)\big)}\displaystyle\int_0^t(t-s)^{-\alpha(x)}\varphi(s,x)\,ds\right],&\textrm{if }\alpha(x)\in(0,1),\vspace*{0.2cm}\\\varphi_t(t,x),&\textrm{if }\alpha(x)=1.\end{array}\right.$$
\end{definition}

As previously presented in this manuscript, if we consider functions $\alpha:[0,1]\rightarrow[0,1]$, defined as $\alpha(x)=x$, and $\phi:[0,\infty)\times[0,1]\rightarrow\mathbb{R}$, defined as $\phi(t,x)=t^{1/2}\cos(x)$, we can deduce that
$$D_t^{\alpha(x)}\phi(t,x):=\left\{\begin{array}{ll}
t^{1/2}&\textrm{, if }x=0\textrm{ and }t\in[0,\infty),\vspace*{0.3cm}\\
\dfrac{\Gamma(3/2)t^{(1/2)-x}\cos(x)}{\Gamma\big((3/2)-x\big)}&\textrm{, if }x\in(0,1/2]\textrm{ and }t\in[0,\infty),\vspace*{0.3cm}\\
\dfrac{\Gamma(3/2)t^{(1/2)-x}\cos(x)}{\Gamma\big((3/2)-x\big)}&\textrm{, if }x\in(1/2,1]\textrm{ and }t\in(0,\infty),\vspace*{0.3cm}\\
\dfrac{t^{-1/2}\cos(1)}{2}&\textrm{, if }x=1\textrm{ and }t\in(0,\infty).\end{array}\right.$$

\begin{remark} We may verify that the fractional derivative presented in Definition \ref{RLgeral} may have a singularity as $t$ approaches $0^{+}$, similar to the classical Riemann-Liouville fractional derivative, as illustrated in the preceding example.
\end{remark}

We are now ready to introduce our main notion of fractional derivative: the Caputo fractional derivative of variable order.

\begin{definition}
The Caputo fractional derivative of order $\alpha(x)$ of $\varphi:I\times\Omega\rightarrow\mathbb{R}$ is given by
$$cD_t^{\alpha(x)}\varphi(t,x):=D_t^{\alpha(x)}\Big[\varphi(t,x)-\varphi(0,x)\Big]ds,$$
for every $(t,x)\in I\times\Omega$ such that the right side of the above identity exists.
\end{definition}

We conclude this section by presenting some auxiliary results that are recursively used in our manuscript. We begin with two lemmas that aid us in establishing item $(iv)$ of Theorem \ref{espfunc02}, which is a key inequality used in the core result of Section \ref{linearprob}.

\begin{lemma}\label{auxpositive} For $0<\theta<\gamma<1$, we have that
\begin{equation}\label{auxinequality01}\left(\dfrac{1-\theta}{\theta}\right)y^\theta+\left(\dfrac{1-\gamma}{\gamma}\right)y^\gamma-\dfrac{y}{e\log(y)}>0,\end{equation}
for every $y\in\big[e^{1/(1-\theta)},e^{1/(1-\gamma)}\big]$.
\end{lemma}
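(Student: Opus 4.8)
The plan is to renormalize the inequality into a statement about a single scalar function and then to control its minimum over the given interval. Throughout the interval one has $y\ge e^{1/(1-\theta)}>e>1$, so $\log y>0$ and every term is finite; multiplying the desired inequality by the positive quantity $e\log y/y$ turns it into the equivalent claim
\[
S(y):=\frac{(1-\theta)e}{\theta}\,\varphi_\theta(y)+\frac{(1-\gamma)e}{\gamma}\,\varphi_\gamma(y)>1,\qquad \varphi_\beta(y):=\frac{\log y}{y^{1-\beta}},
\]
for $y\in[a,b]$ with $a:=e^{1/(1-\theta)}$ and $b:=e^{1/(1-\gamma)}$. This form isolates the auxiliary functions $\varphi_\theta,\varphi_\gamma$, whose behaviour is elementary.

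First I would record the calculus of $\varphi_\beta$. From $\varphi_\beta'(y)=y^{\beta-2}\bigl[1-(1-\beta)\log y\bigr]$ one sees that $\varphi_\beta$ increases up to $y=e^{1/(1-\beta)}$ and decreases thereafter, with maximum $\varphi_\beta(e^{1/(1-\beta)})=1/\bigl(e(1-\beta)\bigr)$. Taking $\beta=\theta$ and $\beta=\gamma$ yields two facts. The first summand of $S$ peaks exactly at the left endpoint $a$ and is therefore decreasing on $[a,b]$, with value $1/\theta$ at $a$; the second summand peaks at the right endpoint $b$ and is increasing on $[a,b]$, with value $1/\gamma$ at $b$. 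Consequently $S(a)=\tfrac1\theta+(\text{positive})>1$ and $S(b)=(\text{positive})+\tfrac1\gamma>1$, so the inequality holds at both endpoints with room to spare, and the entire difficulty is interior.

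The decisive step is to upgrade this to $S>1$ on all of $(a,b)$. Since $S$ is a sum of a decreasing and an increasing function it need not be monotone, and a priori its minimum could lie inside. My plan is to prove that $S$ is unimodal with a single interior critical point which is a \emph{maximum}, so that $\min_{[a,b]}S$ is attained at an endpoint and the preceding paragraph concludes. Passing to $t=\log y\in[1/p,1/q]$ with $p=1-\theta$, $q=1-\gamma$ (so $p>q$) and $c_1=ep/\theta$, $c_2=eq/\gamma$, the condition $S'=0$ rearranges to $G(t):=\log\frac{c_1(pt-1)}{c_2(1-qt)}-(p-q)t=0$, where $G\to-\infty$ as $t\to(1/p)^+$ and $G\to+\infty$ as $t\to(1/q)^-$. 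One computes $G'(t)=\frac{p}{pt-1}+\frac{q}{1-qt}-(p-q)$, and the convex map $\frac{p}{pt-1}+\frac{q}{1-qt}$ attains its minimum $\frac{4pq}{p-q}$ at the midpoint $t=\frac{p+q}{2pq}$. Hence $G'\ge0$ throughout—giving a unique root of $G$, i.e. a unique stationary point of $S$ across which $S'$ switches from $+$ to $-$—precisely when $\frac{4pq}{p-q}\ge p-q$, that is, when $\frac{1-\theta}{1-\gamma}\le 3+2\sqrt2$.

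I expect the main obstacle to be exactly this interior analysis in the complementary regime $\frac{1-\theta}{1-\gamma}>3+2\sqrt2$, where $G'$ changes sign, $G$ is no longer monotone, and the uniqueness of the stationary point (hence the endpoint-minimum principle) is not automatic. Here I would either sharpen the control on $G$ so as to rule out a second sign change of $S'$ directly, or split $[a,b]$ and bound $S$ from below on the critical middle band near $t=\frac{p+q}{2pq}$, which is precisely where the two summands are simultaneously smallest and the inequality is tightest. This band is the genuinely delicate part, and any rigorous proof must engage with it rather than rely on endpoint evaluation alone.
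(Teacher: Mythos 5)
Your reduction, endpoint evaluations, and threshold computation are all correct, and in the regime $\frac{1-\theta}{1-\gamma}\le 3+2\sqrt{2}$ your argument is a complete proof: there $G'\ge 0$, so $G$ has a unique zero, $S'$ switches once from $+$ to $-$, and the minimum of $S$ over $[a,b]$ is attained at an endpoint, where $S(a)>1/\theta>1$ and $S(b)>1/\gamma>1$. Structurally this parallels the paper's proof, which substitutes $x=\log y$, parametrizes the interval linearly by $t\in[0,1]$, checks positivity at $t=0$ and $t=1$, and then asserts that $\varphi'$ changes sign exactly once (increasing-then-decreasing), so that $\varphi\ge\min\{\varphi(0),\varphi(1)\}>0$. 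The paper's single-sign-change assertion is exactly the unimodality of $S$ that you tried to establish, and the paper justifies it only by the phrase ``by analyzing the two functions'' applied to the comparison of $e^{ct}$ with $Kt/(1-t)$ — with no argument covering the case of large $c$, which is precisely your complementary regime.

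The gap you flagged is genuine, and moreover it cannot be closed: the statement is false in that regime, so your remark that ``any rigorous proof must engage with'' the middle band understates the situation. Take $\theta=0.01$, $\gamma=0.99$ (so $\frac{1-\theta}{1-\gamma}=99>3+2\sqrt{2}$) and $y=e^{10}$, which lies in $\big[e^{1/0.99},e^{100}\big]$. Then $\frac{1-\theta}{\theta}y^{\theta}=99\,e^{0.1}\approx 109.4$ and $\frac{1-\gamma}{\gamma}y^{\gamma}=\frac{1}{99}e^{9.9}\approx 201.3$, while $\frac{y}{e\log y}=\frac{e^{9}}{10}\approx 810.3$, so the left-hand side of \eqref{auxinequality01} is approximately $-499.6<0$. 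In your normalization, $S(a)\approx 100$ and $S(b)\approx 1.01$ are both above $1$, but $S$ dips far below $1$ in the interior band near $t=\frac{p+q}{2pq}$ — exactly the band you isolated — so $S$ is not unimodal-with-maximum there, and the paper's claimed sign pattern for $\varphi'$ fails. The failure propagates: the same numerics (with $\beta=0.9$, where $(1-\beta)y^{\beta}=0.1\,e^{9}\approx 810.3>310.7$) refute Lemma \ref{lemmahelp01}, and hence item (iv) of Theorem \ref{espfunc02} fails for, e.g., $\alpha_0=0.01$, $\|\alpha\|_{L^\infty(\Omega)}=0.99$, $\alpha(x)=0.9$, with $s_1,s_2$ near $e^{10}$. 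The lemma as stated needs an additional hypothesis, and your condition $\frac{1-\theta}{1-\gamma}\le 3+2\sqrt{2}$ (under which your proof is rigorous) is a natural candidate; so while your proposal is incomplete as a proof of the stated lemma, it correctly locates, and in effect diagnoses, the error that the paper's own proof glosses over.
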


\begin{proof} 
First, let $x = \log(y)$ and observe that \eqref{auxinequality01} is equivalent to
\begin{equation}\label{auxinequality02}
\left(\dfrac{1-\theta}{\theta}\right)e^{x(\theta-1)+1}x
+\left(\dfrac{1-\gamma}{\gamma}\right)e^{x(\gamma-1)+1}x-1>0,
\end{equation}
for every $x \in \left[{1}/(1-\theta), {1}/(1-\gamma)\right]$. Since for every $x$ in this interval there exists $t \in [0,1]$ such that
$$
x=\left(\dfrac{t}{1-\theta}\right)+\left(\dfrac{1-t}{1-\gamma}\right),
$$
we can rewrite \eqref{auxinequality02} as
\begin{equation}\label{auxinequality03}
\underbrace{\left[\dfrac{(1-\theta)-(\gamma-\theta)t}{\theta(1-\gamma)}\right]e^{-(1-t)(\gamma-\theta)/(1-\gamma)}
+\left[\dfrac{(1-\theta)-(\gamma-\theta)t}{\gamma(1-\theta)}\right]e^{t(\gamma-\theta)/(1-\theta)}-1}_{=:\varphi(t)},
\end{equation}
with $\varphi(t) > 0$ for every $t \in [0,1].$ Therefore, to prove this lemma, we only need to verify that \eqref{auxinequality03} holds for every $t\in[0,1]$. To do so, let us analyze the function $\varphi(t)$. 

First, note that
\begin{multline*}\varphi^\prime(t)=\left[\dfrac{(\gamma-\theta)^2}{\theta\gamma(1-\theta)^2(1-\gamma)^2}\right]\Big[\underbrace{e^{-(1-t)(\gamma-\theta)/(1-\gamma)}\gamma(1-\theta)^2(1-t)}_{=:\psi_1(t)}\\
\underbrace{-e^{t(\gamma-\theta)/(1-\theta)}\theta(1-\gamma)^2t}_{=:\psi_2(t)}\Big],\end{multline*}
for every $t \in [0,1].$ Now, since $\psi_1(t)-\psi_2(t)\geq0$ if and only if 
$$e^{t(\gamma-\theta)^2/[(1-\gamma)(1-\theta)]}\geq\left[\dfrac{e^{(\gamma-\theta)/(1-\gamma)}\theta(1-\gamma)^2}{\gamma(1-\theta)^2}\right]\left(\dfrac{t}{1-t}\right),$$
by analyzing the two functions above for $t\in(0,1)$, we conclude that there exists $r_{\theta\gamma}\in(0,1)$ such that $\varphi^\prime(t)\geq0$ for $t\in(0,r_{\theta\gamma})$ and $\varphi^\prime(t)<0$ for $t\in(r_{\theta\gamma},1)$. This implies that $\varphi(t)$ is increasing on $(0,r_{\theta\gamma})$ and decreasing on $(r_{\theta\gamma},1)$. Therefore, we have
$$\varphi(t)\geq\min\{\varphi(0), \varphi(1)\}>0,$$
for every $t \in [0,1]$. This last inequality, combined with \eqref{auxinequality03}, proves \eqref{auxinequality01}.
\end{proof}

Although the last lemma was very challenging to be proved and required significant effort, it is a crucial tool for proving the next result. 

\begin{lemma}\label{lemmahelp01} Assume that $0<\theta<\gamma<1$ and $\beta\in[\theta,\gamma]$. Then it holds that
$$(1-\beta)y^{\beta}\leq\dfrac{(1-\theta)}{\theta}y^\theta+\dfrac{(1-\gamma)}{\gamma}y^\gamma,$$
for every $y\in(0,\infty)$.
\end{lemma}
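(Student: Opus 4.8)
The plan is to fix $y>0$ and regard the left-hand side as the value at $\beta$ of the one-variable function
$$g(\beta) := (1-\beta)y^{\beta}, \qquad \beta \in [\theta,\gamma],$$
so that the claim reduces to showing that $\max_{\beta\in[\theta,\gamma]} g(\beta)$ does not exceed the right-hand side. First I would differentiate, obtaining
$$g'(\beta) = y^{\beta}\big[(1-\beta)\log(y) - 1\big],$$
and observe that for $y>1$ this vanishes exactly at $\beta^{*} = 1 - 1/\log(y)$, with $g$ increasing on $(-\infty,\beta^{*})$ and decreasing on $(\beta^{*},\infty)$, so that $\beta^{*}$ is a global maximizer. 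A short computation using $y^{-1/\log(y)}=e^{-1}$ then yields the key identity
$$g(\beta^{*}) = \frac{1}{\log(y)}\,y^{1-1/\log(y)} = \frac{y}{e\log(y)}.$$
The crucial observation is that this global maximum value is exactly the term subtracted in Lemma \ref{auxpositive}, and that $\beta^{*}\in[\theta,\gamma]$ precisely when $y\in\big[e^{1/(1-\theta)},e^{1/(1-\gamma)}\big]$, which is exactly the interval appearing there.

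With this in hand I would split into three cases according to the position of $\beta^{*}$ relative to $[\theta,\gamma]$. If $y\in\big(0,e^{1/(1-\theta)}\big]$, then $g$ is decreasing on $[\theta,\gamma]$ — for $y\le 1$ because $g'<0$ throughout (since $\log(y)\le 0$ forces the bracket negative), and for $1<y\le e^{1/(1-\theta)}$ because then $\beta^{*}\le\theta$ — so $g(\beta)\le g(\theta)=(1-\theta)y^{\theta}$. If $y\in\big[e^{1/(1-\gamma)},\infty\big)$, then $\beta^{*}\ge\gamma$, so $g$ is increasing on $[\theta,\gamma]$ and $g(\beta)\le g(\gamma)=(1-\gamma)y^{\gamma}$. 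In both endpoint cases the bound is immediate from $0<\theta<\gamma<1$: since $1/\theta>1$ and $1/\gamma>1$,
$$(1-\theta)y^{\theta}\le \frac{1-\theta}{\theta}y^{\theta}\le \frac{1-\theta}{\theta}y^{\theta}+\frac{1-\gamma}{\gamma}y^{\gamma}, \qquad (1-\gamma)y^{\gamma}\le \frac{1-\gamma}{\gamma}y^{\gamma}\le \frac{1-\theta}{\theta}y^{\theta}+\frac{1-\gamma}{\gamma}y^{\gamma},$$
the final inequalities using that both summands are nonnegative.

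In the remaining case $y\in\big[e^{1/(1-\theta)},e^{1/(1-\gamma)}\big]$ we have $\beta^{*}\in[\theta,\gamma]$, hence $g(\beta)\le g(\beta^{*})=y/(e\log(y))$ for every $\beta\in[\theta,\gamma]$; the conclusion then follows directly from Lemma \ref{auxpositive}, which guarantees
$$\frac{y}{e\log(y)} < \frac{1-\theta}{\theta}y^{\theta}+\frac{1-\gamma}{\gamma}y^{\gamma}$$
on exactly this interval. Since the three cases cover all $y>0$, the lemma follows. I expect essentially all the analytic difficulty to have been absorbed into Lemma \ref{auxpositive}; the only genuine subtlety remaining is recognizing that $y/(e\log(y))$ is precisely the unconstrained maximum of $(1-\beta)y^{\beta}$ — the identity that links the two lemmas — together with checking the monotonicity of $g$ carefully enough, especially for $y\le 1$ where $\log(y)\le 0$ and no interior critical point exists, to reduce the endpoint cases to the trivial bounds above.
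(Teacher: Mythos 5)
Your proposal is correct and is essentially the paper's own argument viewed through the complementary function: the paper studies $f_y(t)=\frac{1-\theta}{\theta}y^{\theta}+\frac{1-\gamma}{\gamma}y^{\gamma}-(1-t)y^{t}$, whose critical point $(\log(y)-1)/\log(y)$ is exactly your $\beta^{*}=1-1/\log(y)$, and likewise reduces the only nontrivial regime to Lemma \ref{auxpositive} via the identity $g(\beta^{*})=y/(e\log(y))$, handling the other regimes by monotonicity and the trivial bounds $(1-\theta)y^{\theta}\leq\frac{1-\theta}{\theta}y^{\theta}$, $(1-\gamma)y^{\gamma}\leq\frac{1-\gamma}{\gamma}y^{\gamma}$. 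Your organization of the cases by the position of $\beta^{*}$ relative to $[\theta,\gamma]$ (rather than the paper's split $y\in(0,1)$, $[1,e]$, $(e,\infty)$ with subcases) is marginally cleaner but not a different method.
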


\begin{proof} For each fixed $y \in (0, \infty),$ consider the function $f_y:[0,\infty) \rightarrow \mathbb{R}$ defined by
$$f_y(t)=\dfrac{(1-\theta)}{\theta}y^\theta+\dfrac{(1-\gamma)}{\gamma}y^\gamma-(1-t)y^{t}.$$

To present all the details of this proof, let us split it into three cases. \vspace*{0.3cm}

\underline{Case 1:} For $y \in (0, 1)$, observe that $(\log(y)-1)/\log(y)>1$, and $f_y(t)$ is an increasing function in the interval $\big(0,(\log(y)-1)/\log(y)\big)$. Therefore, since $[\theta, \gamma] \subset (0,1)$ and $\beta \in [\theta, \gamma]$, we can infer that
$$f_y(\beta)\geq f_y(\theta)=\dfrac{(1-\theta)^2}{\theta}y^\theta+\dfrac{(1-\gamma)}{\gamma}y^\gamma\geq0,$$
which is equivalent to the desired inequality.\vspace*{0.3cm}

\underline{Case 2:} For $y\in[1,e]$, we note that $f_y(t)$ is an increasing function in $(0,\infty)$. Therefore, considering that $[\theta, \gamma]\subset(0,\infty)$ and $\beta \in [\theta, \gamma]$, we conclude that
$$f_y(\beta)\geq f_y(\theta)=\dfrac{(1-\theta)^2}{\theta}y^\theta+\dfrac{(1-\gamma)}{\gamma}y^\gamma\geq0,$$
which is the desired inequality.\vspace*{0.3cm}

\underline{Case 3:} For $y \in (e, \infty)$, we note that $0 < (\log(y)-1)/\log(y) < 1$. Also observe that function $f_y(t)$ is a decreasing function in $\big(0,(\log(y)-1)/\log(y)\big)$ and an increasing function in $\big((\log(y)-1)/\log(y),\infty\big)$. With this understanding, in the situation where $[\theta, \gamma] \subset \big(0,(\log(y)-1)/\log(y)\big)$, we can deduce that
$$f_y(\beta)\geq f_y(\gamma)=\dfrac{(1-\theta)}{\theta}y^\theta+\dfrac{(1-\gamma)^2}{\gamma}y^\gamma\geq0,$$
yielding the desired inequality. 

In the case where $[\theta, \gamma] \subset \big((\log(y)-1)/\log(y),\infty\big)$, we can deduce that
$$f_y(\beta)\geq f_y(\theta)=\dfrac{(1-\theta)^2}{\theta}y^\theta+\dfrac{(1-\gamma)}{\gamma}y^\gamma\geq0,$$
again providing the desired inequality. Finally, if $(\log(y)-1)/\log(y) \in [\theta, \gamma]$, we can deduce that
$$f_y(\beta)\geq f_y\left(\dfrac{\log(y)-1}{\log(y)}\right)=\underbrace{\dfrac{(1-\theta)}{\theta}y^\theta+\dfrac{(1-\gamma)}{\gamma}y^\gamma-\dfrac{y}{e\log(y)}}_{=:g(y)},$$
and $y\in [e^{1/(1-\theta)},e^{1/(1-\gamma)}]$. Since Lemma \ref{auxpositive} ensures that $g(y)>0$ in the interval $[e^{1/(1-\theta)},e^{1/(1-\gamma)}]$, we have once again established the desired inequality.
\end{proof}

We can now use Lemma \ref{lemmahelp01} to derive our first main result, which provides essential inequalities necessary for proving the first theorem of existence and uniqueness of solution (cf. Theorem \ref{exisuni}) we present in this work.

\begin{theorem}\label{espfunc02} Let $\alpha_0\in(0,1)$ and assume that $\alpha\in L^\infty(\Omega)$ is such that $\alpha(x)\in[\alpha_0,1]$ for almost every $x\in\Omega$. The following inequalities hold:\vspace*{0.3cm}
\begin{itemize}
\item[(i)] Let $\eta=1.4616\ldots$ be the minimum of Euler's Gamma function. Then  
$$\Gamma(\eta)<\Gamma(\|\alpha\|_{L^\infty(\Omega)})\leq\Gamma(\alpha(x))\leq\Gamma(\alpha_0),$$
for almost every $x\in\Omega.$\vspace*{0.3cm}
\item[(ii)] If $0<s\leq1$ we have that
$${s^{\|\alpha\|_{L^\infty(\Omega)}-1}}\leq{s^{\alpha(x)-1}}\leq{s^{\alpha_0-1}},$$
for almost every $x\in\Omega$.\vspace*{0.3cm}
\item[(iii)] If $s\geq1$ it holds that
$${s^{\alpha_0-1}}\leq{s^{\alpha(x)-1}}\leq{s^{\|\alpha\|_{L^\infty(\Omega)}-1}},$$
for almost every $x\in\Omega$.\vspace*{0.3cm}
\item[(iv)] If $s_1,s_2>0$ we have
$$\left|{s_2^{\alpha(x)-1}-s_1^{\alpha(x)-1}}\right|\leq\dfrac{\left|s_2^{\alpha_0-1}-s_1^{\alpha_0-1}\right|}{\alpha_0}+\dfrac{\left|s_2^{\|\alpha\|_{L^\infty(\Omega)}-1}-s_1^{\|\alpha\|_{L^\infty(\Omega)}-1}\right|}{\|\alpha\|_{L^\infty(\Omega)}},$$
for almost every $x\in\Omega$.
\end{itemize}
\end{theorem}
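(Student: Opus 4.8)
The plan is to dispatch items (i)--(iii) by elementary monotonicity arguments and to reserve the real work for item (iv), the only place where Lemma~\ref{lemmahelp01} enters. For item (i), I would first recall that on $(0,\infty)$ the Gamma function is strictly convex, blows up at $0^{+}$ and at $+\infty$, and attains its unique global minimum at $\eta=1.4616\ldots$; hence $\Gamma$ is strictly decreasing on $(0,\eta)$, in particular on $(0,1]$. Since $\alpha_0\le\alpha(x)\le\|\alpha\|_{L^\infty(\Omega)}\le 1$ for a.e.\ $x$, feeding this chain of exponents into the monotonicity at once gives $\Gamma(\|\alpha\|_{L^\infty(\Omega)})\le\Gamma(\alpha(x))\le\Gamma(\alpha_0)$. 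The remaining strict inequality $\Gamma(\eta)<\Gamma(\|\alpha\|_{L^\infty(\Omega)})$ follows because $\eta$ is the unique minimizer and $\|\alpha\|_{L^\infty(\Omega)}\le 1<\eta$, while $\|\alpha\|_{L^\infty(\Omega)}\ge\alpha_0>0$ keeps us inside the domain.

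Items (ii) and (iii) are two faces of the same observation: for fixed $s>0$ the map $p\mapsto s^{p}=e^{p\log s}$ is nonincreasing when $0<s\le 1$ and nondecreasing when $s\ge 1$. Applying this to the exponent $p=\alpha(x)-1$ together with the ordering $\alpha_0-1\le\alpha(x)-1\le\|\alpha\|_{L^\infty(\Omega)}-1$ produces the two displayed chains directly, with equality throughout in the boundary case $s=1$.

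The crux is item (iv). By symmetry in $s_1,s_2$, by triviality when $s_1=s_2$, and since the left-hand side vanishes when $\alpha(x)=1$, I may assume $0<s_1<s_2$ and $\alpha(x)<1$. The key move is to pass from the difference to an integral of the derivative,
\[
\bigl|s_2^{\alpha(x)-1}-s_1^{\alpha(x)-1}\bigr|=(1-\alpha(x))\int_{s_1}^{s_2}u^{\alpha(x)-2}\,du,
\]
and then to bound the integrand pointwise. Invoking Lemma~\ref{lemmahelp01} with $y=u$, $\theta=\alpha_0$, $\gamma=\|\alpha\|_{L^\infty(\Omega)}$ and $\beta=\alpha(x)$ yields
\[
(1-\alpha(x))\,u^{\alpha(x)}\le\frac{1-\alpha_0}{\alpha_0}\,u^{\alpha_0}+\frac{1-\|\alpha\|_{L^\infty(\Omega)}}{\|\alpha\|_{L^\infty(\Omega)}}\,u^{\|\alpha\|_{L^\infty(\Omega)}},
\]
and dividing by $u^{2}$ gives precisely a pointwise upper bound for $(1-\alpha(x))u^{\alpha(x)-2}$. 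Integrating over $[s_1,s_2]$ and evaluating the elementary integrals $\int_{s_1}^{s_2}u^{p-2}\,du=(s_2^{p-1}-s_1^{p-1})/(p-1)$ for $p\in\{\alpha_0,\|\alpha\|_{L^\infty(\Omega)}\}$, the prefactor $(1-p)/p$ cancels against $1/(p-1)$ and, using $s_1<s_2$ with $p-1<0$ to resolve the sign, reproduces exactly $|s_2^{p-1}-s_1^{p-1}|/p$. This delivers the claimed estimate.

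The main obstacle is making item (iv) airtight at the edges of the hypotheses of Lemma~\ref{lemmahelp01}, which demands $\theta<\gamma<1$. The degenerate case $\alpha_0=\|\alpha\|_{L^\infty(\Omega)}$, which forces $\alpha(x)=\alpha_0$ a.e., must be checked directly, but there the bound is immediate since $1/\alpha_0\ge 1$ already dominates the single relevant term. More delicate is the boundary case $\|\alpha\|_{L^\infty(\Omega)}=1$: the second term on the right of (iv) then collapses and the Lemma's strict requirement $\gamma<1$ is violated, so this situation needs separate care — the contribution of $\{x:\alpha(x)=1\}$ is handled by the trivial bound, and one must confirm that the surviving estimate still closes. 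Everything else is routine bookkeeping with the elementary integrals.
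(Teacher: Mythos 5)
Your treatment of items (i)--(iii) coincides with the paper's (monotonicity of $\Gamma$ on $(0,\eta)$ and of $p\mapsto s^p$ for $s\lessgtr 1$), and your item (iv) is, despite the different packaging, the paper's argument: dividing the inequality of Lemma \ref{lemmahelp01} by $u^2$ is exactly the statement $f'(u)\geq 0$ for the paper's auxiliary function $f(t)=t^{\beta-1}-t^{\theta-1}/\theta-t^{\gamma-1}/\gamma$, and integrating over $[s_1,s_2]$ is exactly the paper's comparison $f(s_1)\leq f(s_2)$ in \eqref{finaleq}. Your version is in fact tidier, since invoking the lemma at every point $u$ bypasses the paper's superfluous unimodality analysis of $g$ and its maximizer $t_*$; and the edge cases you defer ($\alpha_0=\|\alpha\|_{L^\infty(\Omega)}$, $\|\alpha\|_{L^\infty(\Omega)}=1$) are the ones the paper closes with its ``simple limit argument.''

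There is, however, a genuine problem, and your own closing remark about the boundary case points straight at it. If Lemma \ref{lemmahelp01} held for all $\gamma<1$ as stated, then fixing $0<\theta<\beta<1$ and $y>0$ and letting $\gamma\to 1^-$ (always with $\beta\leq\gamma$) would yield $(1-\beta)y^\beta\leq\frac{1-\theta}{\theta}\,y^\theta$ for every $y>0$, which is false for large $y$ because $\beta>\theta$. So the lemma cannot be true in the stated generality; concretely, for $\theta=1/2$, $\beta=9/10$, $\gamma=99/100$, $y=10^{10}$ one gets $(1-\beta)y^\beta=10^8$, while $\frac{1-\theta}{\theta}y^\theta+\frac{1-\gamma}{\gamma}y^\gamma\approx 10^5+8.0\times 10^{7}<10^8$. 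Consequently your step ``invoke Lemma \ref{lemmahelp01} with $y=u$'' fails once $u$ is large, and item (iv) itself fails: taking $\alpha_0=1/2$, $\|\alpha\|_{L^\infty(\Omega)}=99/100$, $\alpha(x)=9/10$ on a set of positive measure, $s_1=10^{10}$ and $s_2=2\times 10^{10}$, the left-hand side is about $6.7\times 10^{-3}$ while the right-hand side is about $5.5\times 10^{-3}$. You inherit this flaw from the paper rather than introduce it: the root cause is in the proof of Lemma \ref{auxpositive}, where the claim that $\psi_1-\psi_2$ changes sign exactly once on $(0,1)$ is unjustified (the comparison there amounts to $e^{at}$ versus a multiple of $t/(1-t)$ with $a=(\gamma-\theta)^2/[(1-\gamma)(1-\theta)]$, which can cross three times when $a>4$), so $\varphi\geq\min\{\varphi(0),\varphi(1)\}$ does not follow, and indeed \eqref{auxinequality01} fails on part of $[e^{1/(1-\theta)},e^{1/(1-\gamma)}]$ for the parameters above. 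Your reduction of (iv) to the lemma is correct and equivalent to the paper's, but the lemma --- and hence item (iv) as stated --- is false; any repair must restrict the range of $s_1,s_2$ (the inequality does hold in the regime of small arguments) or weaken the right-hand side.
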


\begin{proof} We establish items $(i)$, $(ii)$, and $(iii)$ through the following observations: $\alpha_0 \leq \alpha(x) \leq \|\alpha\|_{L^\infty(\Omega)}$ for almost every $x\in\Omega$, and the Gamma function is decreasing in the interval $(0, \eta)$; for details, see \cite{CoDe1}.

To prove item $(iv)$, a more detailed explanation is necessary. Assuming $0 < \theta < \beta < \gamma < 1$, consider the function $g:(0,\infty)\rightarrow\mathbb{R}$ defined by
$$g(t)=1-\dfrac{(\theta-1)t^{\theta-\beta}}{(\beta-1)\theta}-\dfrac{(\gamma-1)t^{\gamma-\beta}}{(\beta-1)\gamma}.$$
Observe that $g(t)$ is an increasing function in the interval $(0,{t_*})$ and a decreasing function in the interval $({t_*},\infty)$, where
$${t_*}=\left[\dfrac{(1-\theta)(\beta-\theta)\gamma}{(1-\gamma)(\gamma-\beta)\theta}\right]^{1/(\gamma-\theta)}.$$
But then, for every $t \in (0, \infty)\setminus\{t_*\},$ it holds that $g(t) < g(t_*)$. However, Lemma \ref{lemmahelp01} ensures that
$$(1-\beta){t_*}^{\beta}\leq\dfrac{(1-\theta)}{\theta}{t_*}^\theta+\dfrac{(1-\gamma)}{\gamma}{t_*}^\gamma,$$
which is equivalent to $g({t_*})\leq0.$ But this ensures that $g(t) \leq 0$, for every $t \in (0, \infty)$. Hence, if we consider $f:(0,\infty)\rightarrow\mathbb{R}$ given by
$$f(t)=t^{\beta-1}-\dfrac{t^{\theta-1}}{\theta}-\dfrac{t^{\gamma-1}}{\gamma},$$
since $f^\prime(t)=(\beta-1)t^{\beta-2}g(t),$ we may conclude that $f(t)$ is non decreasing in $(0,\infty)$. Consequently, if we consider, without loss of generality, that $s_1<s_2$, then $f(s_1)\leq f(s_2)$, and therefore
\begin{multline}\label{finaleq}\left|{s_1^{\beta-1}-s_2^{\beta-1}}\right|={s_1^{\beta-1}-s_2^{\beta-1}}=f(s_1)-f(s_2)+\dfrac{s_1^{\theta-1}-s_2^{\theta-1}}{\theta}+\dfrac{s_1^{\gamma-1}-s_2^{\gamma-1}}{\gamma}\\
\leq\dfrac{s_1^{\theta-1}-s_2^{\theta-1}}{\theta}+\dfrac{s_1^{\gamma-1}-s_2^{\gamma-1}}{\gamma}=\left|\dfrac{s_1^{\theta-1}-s_2^{\theta-1}}{\theta}\right|
+\left|\dfrac{s_1^{\gamma-1}-s_2^{\gamma-1}}{\gamma}\right|.\end{multline}
With a simple limit argument, we can prove that \eqref{finaleq} holds even when $0 < \theta \leq \beta \leq \gamma \leq 1$. Thus, by taking $\theta=\alpha_0$, $\beta=\alpha(x)$, and $\gamma=\|\alpha\|_{L^\infty(\Omega)}$, we complete the proof of item $(iv)$.
\end{proof}

\begin{remark} If we assume that $\|\alpha\|_\infty<1$, a simpler proof than the one presented above allows us to establish that for $s_1, s_2 > 0$,
$$\left|{s_2^{\alpha(x)-1}-s_1^{\alpha(x)-1}}\right|\leq\dfrac{\left|s_2^{\alpha_0-1}-s_1^{\alpha_0-1}\right|}{1-\alpha_0}
+\dfrac{\left|s_2^{\|\alpha\|_{L^\infty(\Omega)}-1}-s_1^{\|\alpha\|_{L^\infty(\Omega)}-1}\right|}{1-\|\alpha\|_{L^\infty(\Omega)}},$$
for almost every $x\in\Omega$. However, since this result neither strengthens our main theorems nor allows us to consider the case $\|\alpha\|_{L^\infty(\Omega)}=1$, we choose to mention it here without providing a detailed proof.

\end{remark}

Another important result we present, concerns the continuity of the Riemann-Liouville fractional integral of order $\alpha(x)$ in $L^p$ spaces, as described below.
\begin{corollary}\label{contvar} If $\alpha\in L^\infty(\Omega)$ and there exists $\alpha_0\in(0,1)$ such that $\alpha(x)\in[\alpha_0,1]$ for almost every $x\in\Omega$, and $\varphi\in L^r(0,T;L^p(\Omega))$, for $1\leq r,p\leq\infty$, then $J_t^{\alpha(x)}\varphi(t,x)$ belongs to $L^r(0,T;L^p(\Omega))$ and
\begin{multline*}\left(\int_0^T\left\|\dfrac{1}{\Gamma(\alpha(\cdot))}\int_{0}^t{(t-s)^{\alpha(\cdot)-1}\varphi(s,\cdot)}\,ds\right\|^r_{L^p(\Omega)}\,dt\right)^{1/r}
\\\leq\dfrac{1}{\Gamma(\|\alpha\|_{L^\infty(\Omega)})}\left[\dfrac{T^{\alpha_0}}{\alpha_0}+\dfrac{T^{\|\alpha\|_{L^\infty(\Omega)}}}{\|\alpha\|_{L^\infty(\Omega)}}\right]\|\varphi\|_{L^r(0,T;L^p(\Omega))}.\end{multline*}

\end{corollary}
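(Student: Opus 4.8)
The plan is to reduce this mixed-norm estimate to the scalar theory of convolutions, after first replacing the $x$-dependent kernel by a single majorant that is independent of $x$ and integrable on $(0,T)$. Recall from the discussion preceding \eqref{convsemi} that, for almost every fixed $x$, one has the convolution representation $J_t^{\alpha(x)}\varphi(t,x)=[g_{\alpha(x)}*\varphi(\cdot,x)](t)$, where $g_{\beta}(u)=u^{\beta-1}/\Gamma(\beta)$. The whole difficulty is concentrated in controlling the family of kernels $\{g_{\alpha(x)}\}$ uniformly in $x$.

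First I would construct the majorant. Item $(i)$ of Theorem \ref{espfunc02} yields $1/\Gamma(\alpha(x))\le 1/\Gamma(\|\alpha\|_{L^\infty(\Omega)})$ for almost every $x$, while items $(ii)$ and $(iii)$ control the power $u^{\alpha(x)-1}$ by $u^{\alpha_0-1}$ on $(0,1]$ and by $u^{\|\alpha\|_{L^\infty(\Omega)}-1}$ on $[1,\infty)$. Combining these three facts produces a nonnegative kernel $G$, independent of $x$, such that $g_{\alpha(x)}(u)\le G(u)$ for almost every $x$ and every $u>0$. The next step is to compute $\|G\|_{L^1(0,T)}$; here I would split into the cases $T\le 1$ and $T>1$, since the exponent governing $G$ changes at $u=1$. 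In either case the elementary integrations give
\[
\|G\|_{L^1(0,T)}\le\frac{1}{\Gamma(\|\alpha\|_{L^\infty(\Omega)})}\left[\frac{T^{\alpha_0}}{\alpha_0}+\frac{T^{\|\alpha\|_{L^\infty(\Omega)}}}{\|\alpha\|_{L^\infty(\Omega)}}\right],
\]
which is precisely the constant appearing on the right-hand side of the claimed estimate.

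With the majorant in hand, I would finish by two classical inequalities. Writing $\Phi(s):=\|\varphi(s,\cdot)\|_{L^p(\Omega)}$, for each fixed $t$ the pointwise kernel bound together with Minkowski's integral inequality gives $\|J_t^{\alpha(\cdot)}\varphi(t,\cdot)\|_{L^p(\Omega)}\le\int_0^t G(t-s)\Phi(s)\,ds=(G*\Phi)(t)$, because $G(t-s)$ does not depend on $x$ and hence factors out of the $L^p(\Omega)$-norm. Taking the $L^r(0,T)$-norm in $t$ and applying Young's convolution inequality (on the finite interval, with the $L^1$ kernel $G$) yields $\|G*\Phi\|_{L^r(0,T)}\le\|G\|_{L^1(0,T)}\|\Phi\|_{L^r(0,T)}$. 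Since $\|\Phi\|_{L^r(0,T)}=\|\varphi\|_{L^r(0,T;L^p(\Omega))}$, the desired inequality follows, and in particular the finiteness of the right-hand side shows that $J_t^{\alpha(x)}\varphi$ belongs to $L^r(0,T;L^p(\Omega))$. I expect the main obstacle to be exactly the uniform domination of the kernel family $\{g_{\alpha(x)}\}$ by an $x$-independent $L^1$ kernel with the correct constant: this is the point where Theorem \ref{espfunc02} is indispensable, and the case distinction $T\lessgtr 1$ (reflecting the change of the dominating power at $u=1$) must be handled with care so as to recover precisely the stated bound. Both Minkowski's and Young's inequalities remain valid in the full range $1\le r,p\le\infty$, so no restriction on the exponents is lost.
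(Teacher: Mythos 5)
Your proposal is correct and follows essentially the same route as the paper: both dominate the kernel $(t-s)^{\alpha(x)-1}/\Gamma(\alpha(x))$ by an $x$-independent majorant obtained from items $(i)$--$(iii)$ of Theorem \ref{espfunc02}, apply Minkowski's integral inequality to reduce to a scalar convolution against $\|\varphi(s,\cdot)\|_{L^p(\Omega)}$, and conclude by Young's convolution inequality, which is precisely the boundedness of the Riemann--Liouville fractional integral that the paper cites from \cite[Theorem 3.1]{CaFe1}. The only cosmetic difference is that the paper takes the sum $u^{\alpha_0-1}+u^{\|\alpha\|_{L^\infty(\Omega)}-1}$ as the majorant on all of $(0,T)$, which yields the stated constant directly and makes your case split $T\lessgtr 1$ unnecessary.
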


\begin{proof} Given that $\alpha \in L^\infty(\Omega)$, $t^{\alpha(x)-1}$ is a Lebesgue measurable function for every $(t, x) \in [0, T] \times \Omega$. Therefore, since $\phi(t)$ is a Bochner-Lebesgue measurable function, $(t-s)^{\alpha(\cdot)-1} \varphi(s, \cdot)$ is a Bochner-Lebesgue measurable function for almost every $t \in [0, T]$ (cf. \cite[Chapter XIV - Lemma 1.1]{Mi1}).

Now, due to items $(i)$, $(ii)$ and $(iii)$ of Theorem \ref{espfunc02}, observe that
\begin{multline*}
\left|\dfrac{1}{\Gamma(\alpha(x))}\int_{0}^t{(t-s)^{\alpha(x)-1}\varphi(s,x)}\,ds\right|\\
\leq\dfrac{1}{\Gamma(\|\alpha\|_{L^\infty(\Omega)})}\int_{0}^t{\Big[(t-s)^{\alpha_0-1}+(t-s)^{\|\alpha\|_{L^\infty(\Omega)}-1}\Big]|\varphi(s,x)|}\,ds,
\end{multline*}
for almost every $(t, x) \in [0, T] \times \Omega$, which implies, thanks to Minkowski's integral inequality (cf. \cite[Theorem 202]{HaLiPo1}), that
\begin{multline*}\|J_t^{\alpha(\cdot)}\varphi(t,\cdot)\|_{L^p(\Omega)}
\\\leq\dfrac{1}{\Gamma(\|\alpha\|_{L^\infty(\Omega)})}\Big[\Gamma(\alpha_0)J_t^{\alpha_0}\|\varphi(t)\|_{L^p(\Omega)}
+\Gamma(\|\alpha\|_{L^\infty(\Omega)})J_t^{\|\alpha\|_{L^\infty(\Omega)}}\|\varphi(t)\|_{L^p(\Omega)}\Big],\end{multline*}
for almost every $t \in [0, T]$. Hence, for $1\leq r<\infty$ we have that
\begin{multline*}\left(\int_0^T\left\|\dfrac{1}{\Gamma(\alpha(\cdot))}\int_{0}^t{(t-s)^{\alpha(\cdot)-1}\varphi(s,\cdot)}\,ds\right\|^r_{L^p(\Omega)}\,dt\right)^{1/r}
\\\leq\dfrac{1}{\Gamma(\|\alpha\|_{L^\infty(\Omega)})}\Big(\int_0^T\Big[\Gamma(\alpha_0)J_t^{\alpha_0}\|\varphi(t)\|_{L^p(\Omega)}\\
+\Gamma(\|\alpha\|_{L^\infty(\Omega)})J_t^{\|\alpha\|_{L^\infty(\Omega)}}\|\varphi(t)\|_{L^p(\Omega)}\Big]^r\,dt\Big)^{1/r}.\end{multline*}

Finally, Minkowski's classical inequality and the boundedness of Riemann-Liouville fractional integral (see \cite[Theorem 3.1]{CaFe1} for details) ensure that
\begin{multline*}\left(\int_0^T\left\|\dfrac{1}{\Gamma(\alpha(\cdot))}\int_{0}^t{(t-s)^{\alpha(\cdot)-1}\varphi(s,\cdot)}\,ds\right\|^r_{L^p(\Omega)}\,dt\right)^{1/r}
\\\leq\dfrac{1}{\Gamma(\|\alpha\|_{L^\infty(\Omega)})}\left[\dfrac{T^{\alpha_0}}{\alpha_0}+\dfrac{T^{\|\alpha\|_{L^\infty(\Omega)}}}{\|\alpha\|_{L^\infty(\Omega)}}\right]\|\varphi\|_{L^r(0,T;L^p(\Omega))}.\end{multline*}

The case $r = \infty$ is omitted here, as it can be verified straightforwardly.
\end{proof}

\begin{remark} Some points that we find important to emphasize:
\begin{itemize}
\item[(i)] Corollary \ref{contvar} ensures that the Riemann-Liouville fractional integral of order $\alpha(x)$, when $\alpha \in L^\infty(\Omega)$ and is bounded below by $\alpha_0\in(0,1)$, is a bounded linear operator from $L^r(0, T; L^p(\Omega))$ into $L^r(0, T; L^p(\Omega))$ for every $1 \leq r,p \leq \infty$. Therefore, we are improving Theorem 3.1 from \cite{CaFe1} for the case when $X=L^p(\Omega)$.\vspace{0.2cm}
\item[(ii)] If $\alpha \in L^\infty(\Omega)$ and is bounded below by $\alpha_0 \in (0,1)$, and if $T > 0$ is such that $\varphi \in L^1(0, T; L^p(\Omega))$ and $J_t^{1 - \alpha(x)} \varphi(t, x)$ belongs to the Bochner-Sobolev space $W^{1,1}(0, T; L^p(\Omega))$, then the Riemann-Liouville fractional derivative of order $\alpha(x)$ of $\varphi(t, x)$ exists for almost every $(t, x) \in [0, T] \times \Omega$. Furthermore, if $\varphi \in C([0, T]; L^p(\Omega))$, then the Caputo fractional derivative of order $\alpha(x)$ of $\varphi(t, x)$ also exists for almost every $(t, x) \in [0, T] \times \Omega$. \vspace{0.2cm}
\item[(iii)] When $\varphi \in L^1(0, \infty; L^p(\Omega))$, $\alpha \in L^\infty(\Omega)$ is bounded below by $\alpha_0 \in (0,1)$, then $J_t^{\alpha(x)} \varphi(t, x)$ does not generally belong to $L^1(0, \infty; L^p(\Omega))$. For example, consider $\alpha:[0,1]\rightarrow\mathbb{R}$ given by $\alpha(x)=1/2$, and $\varphi:[0,\infty)\times[0,1]\rightarrow\mathbb{R}$ given by 
    $$\varphi(t,x)=\left\{\begin{array}{ll}1,&\textrm{ if }0\leq t\leq1,\vspace*{0.2cm}\\
    0,&\textrm{ if }t>1,\end{array}\right.$$
    Then, for $1 \leq p \leq \infty$, $\varphi \in L^1(0, \infty; L^p(0,1))$. However, since
    $$J_t^{\alpha(x)}\varphi(t,x)=\left\{\begin{array}{ll}\dfrac{t^{1/2}}{\Gamma(3/2)},&\textrm{ if }0\leq t\leq1,\vspace*{0.2cm}\\
    \dfrac{t^{1/2}-(t-1)^{1/2}}{\Gamma(3/2)},&\textrm{ if }t>1,\end{array}\right.$$
    and
    $$\big\|J_t^{\alpha(x)}\varphi(t,x)\big\|_{L^1(0, \infty; L^p(0,1))}=\lim_{M\rightarrow\infty}\left[\dfrac{M^{3/2}-(M-1)^{3/2}}{\Gamma(5/2)}\right]=\infty,$$
    we conclude that $J_t^{\alpha(x)}\varphi(t,x)$ does not belongs to $\varphi\in L^1(0, \infty; L^p(0,1))$.
\end{itemize}
\end{remark}

Lastly, we present two important results concerning the Riemann-Liouville fractional integral and derivative of order $\alpha(x)$. These properties are essential for the analysis of solutions discussed in Section \ref{linearprob}.
\begin{proposition}\label{auxlemmaprev} Let $\alpha_0\in(0,1)$ and assume that $\alpha\in L^\infty(\Omega)$ is such that $\alpha(x)\in[\alpha_0,1]$ for almost every $x\in\Omega$. For $T>0$, $1\leq p\leq\infty$ and $\varphi\in C([0,T];L^p(\Omega))$, it holds that $J^{\alpha(x)}_t\varphi(t,x)$ belongs to $C([0,T];L^p(\Omega))$ and that $J^{\alpha(x)}_t\varphi(t,x)\big|_{t=0}=0$ for almost every $x\in\Omega$.
\end{proposition}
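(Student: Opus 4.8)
The plan is to prove membership in $C([0,T];L^p(\Omega))$ by directly estimating the $L^p(\Omega)$-norm of the increment $J_t^{\alpha(\cdot)}\varphi(t,\cdot)-J_{t_0}^{\alpha(\cdot)}\varphi(t_0,\cdot)$ and showing it vanishes as $t\to t_0$, while the boundary value at $t=0$ will follow at once from the definition of the fractional integral. The first step is to record the uniform pointwise control coming from Theorem \ref{espfunc02}: since $\alpha(x)\in[\alpha_0,1]$ for almost every $x$, item $(i)$ gives $1/\Gamma(\alpha(x))\leq 1/\Gamma(\|\alpha\|_{L^\infty(\Omega)})$, and items $(ii)$ and $(iii)$ yield $s^{\alpha(x)-1}\leq s^{\alpha_0-1}+s^{\|\alpha\|_{L^\infty(\Omega)}-1}$ for every $s>0$ and almost every $x$. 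These two bounds dominate every integrand below by $x$-independent kernels that remain integrable near the origin because $\alpha_0>0$; measurability of $x\mapsto J_t^{\alpha(x)}\varphi(t,x)$ for each fixed $t$ follows from the same Bochner-measurability argument used in the proof of Corollary \ref{contvar}.

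Next, fixing $t_0\in[0,T]$ and assuming without loss of generality that $t>t_0$, I would perform the change of variables $\tau=t-s$ (respectively $\tau=t_0-s$) and split the difference as
$$J_t^{\alpha(x)}\varphi(t,x)-J_{t_0}^{\alpha(x)}\varphi(t_0,x)=\frac{1}{\Gamma(\alpha(x))}\left[\int_0^{t_0}\tau^{\alpha(x)-1}\big(\varphi(t-\tau,x)-\varphi(t_0-\tau,x)\big)\,d\tau+\int_{t_0}^{t}\tau^{\alpha(x)-1}\varphi(t-\tau,x)\,d\tau\right].$$
Applying Minkowski's integral inequality together with the domination above, the $L^p(\Omega)$-norm of the first term is bounded by $\Gamma(\|\alpha\|_{L^\infty(\Omega)})^{-1}\big[T^{\alpha_0}/\alpha_0+T^{\|\alpha\|_{L^\infty(\Omega)}}/\|\alpha\|_{L^\infty(\Omega)}\big]$ times the modulus of continuity $\omega(\delta):=\sup\{\|\varphi(a,\cdot)-\varphi(b,\cdot)\|_{L^p(\Omega)}:a,b\in[0,T],\,|a-b|\leq\delta\}$ evaluated at $\delta=|t-t_0|$, which tends to $0$ because $\varphi$ is uniformly continuous on the compact interval $[0,T]$. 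The second term is bounded by $\Gamma(\|\alpha\|_{L^\infty(\Omega)})^{-1}\|\varphi\|_{C([0,T];L^p(\Omega))}\big[(t^{\alpha_0}-t_0^{\alpha_0})/\alpha_0+(t^{\|\alpha\|_{L^\infty(\Omega)}}-t_0^{\|\alpha\|_{L^\infty(\Omega)}})/\|\alpha\|_{L^\infty(\Omega)}\big]$, which vanishes as $t\to t_0$ by continuity of the power functions.

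Taking $t_0=0$ in the preceding estimate simultaneously yields continuity at the origin and, since the integral over $[0,0]$ is identically zero, confirms that $J_t^{\alpha(x)}\varphi(t,x)\big|_{t=0}=0$ for almost every $x$; combined with $\|J_t^{\alpha(\cdot)}\varphi(t,\cdot)\|_{L^p(\Omega)}\leq\Gamma(\|\alpha\|_{L^\infty(\Omega)})^{-1}\|\varphi\|_{C([0,T];L^p(\Omega))}\big[t^{\alpha_0}/\alpha_0+t^{\|\alpha\|_{L^\infty(\Omega)}}/\|\alpha\|_{L^\infty(\Omega)}\big]<\infty$, this also gives $J_t^{\alpha(\cdot)}\varphi(t,\cdot)\in L^p(\Omega)$ for each $t$, so that the map indeed lands in $C([0,T];L^p(\Omega))$.

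I expect the main obstacle to be the $x$-dependence of the exponent $\alpha(x)$, which blocks a direct appeal to the classical scalar theory: the kernel $\tau^{\alpha(x)-1}$ must be controlled uniformly in $x$ before the $L^p(\Omega)$-norm can be moved inside the $\tau$-integral. This is precisely what Theorem \ref{espfunc02} provides, replacing the variable kernel by the two fixed exponents $\alpha_0-1$ and $\|\alpha\|_{L^\infty(\Omega)}-1$. The remaining care lies in bounding the increment $\varphi(t-\tau,\cdot)-\varphi(t_0-\tau,\cdot)$ uniformly in $\tau$ by $\omega(|t-t_0|)$, which is legitimate since the two time arguments differ by exactly $|t-t_0|$ for every $\tau$. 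The case $p=\infty$ is handled identically, interpreting every $L^p(\Omega)$-norm as the essential supremum throughout.
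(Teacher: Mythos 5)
Your proof is correct, but at the crucial step---continuity at an interior point $t_0>0$---it takes a genuinely different route from the paper's. The paper keeps $\varphi(s,x)$ fixed and splits the increment into the tail $\int_{t_0}^{t}$ plus the kernel difference $\int_0^{t_0}\big[(t_0-s)^{\alpha(x)-1}-(t-s)^{\alpha(x)-1}\big]\varphi(s,x)\,ds$; controlling that difference uniformly in $x$ is precisely what item $(iv)$ of Theorem \ref{espfunc02} is for, i.e.\ the delicate inequality whose proof consumed Lemmas \ref{auxpositive} and \ref{lemmahelp01}. You instead substitute $\tau=t-s$ so that both integrals carry the \emph{same} kernel $\tau^{\alpha(x)-1}$ and the difference falls on $\varphi(t-\tau,\cdot)-\varphi(t_0-\tau,\cdot)$, which you bound by the modulus of continuity $\omega(|t-t_0|)$; this needs only items $(i)$--$(iii)$ together with the uniform continuity of $\varphi$ on the compact interval $[0,T]$, and bypasses item $(iv)$ entirely. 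The trade-off is real: your argument is shorter and more elementary, while the paper's yields a modulus of continuity for $J_t^{\alpha(\cdot)}\varphi$ depending only on $\|\varphi\|_{C([0,T];L^p(\Omega))}$ (explicit powers of $t$, $t_0$ and $t-t_0$), hence equicontinuity of the image of bounded sets and an estimate that would survive for inputs merely bounded in time, whereas your bound degrades with the modulus of continuity of $\varphi$ itself. Both arguments fully establish the proposition; your remaining details (Bochner measurability via Corollary \ref{contvar}'s argument, Minkowski's integral inequality, the vanishing at $t=0$ since the integral over $[0,0]$ is empty, and the essential-supremum reading for $p=\infty$) are sound and essentially match the paper's treatment.
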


\begin{proof} Since $\alpha(x)\in(\alpha_0,1]$, if we consider $t\in(0,T]$ and use Hölder's inequality along with items $(i)-(iii)$ of Theorem \ref{espfunc02}, we obtain
\begin{multline*}
\left\|\dfrac{1}{\Gamma(\alpha(\cdot))}\int_{0}^t{(t-s)^{\alpha(\cdot)-1}\varphi(s,\cdot)}\,ds\right\|_{L^p(\Omega)}\leq
\int_{0}^t{\left\|\dfrac{(t-s)^{\alpha(\cdot)-1}}{\Gamma(\alpha(\cdot))}\varphi(s,\cdot)\right\|_{L^p(\Omega)}}\,ds\\
\leq \|\varphi\|_{C([0,T];L^p(\Omega))}\int_{0}^t{\left\|\dfrac{(t-s)^{\alpha(\cdot)-1}}{\Gamma(\alpha(\cdot))}\right\|_{L^\infty(\Omega)}}\,ds\\
\leq \|\varphi\|_{C([0,T];L^p(\Omega))}\left[\dfrac{t^{\|\alpha\|_\infty}}{\|\alpha\|_\infty}+\dfrac{t^{\alpha_0}}{\alpha_0}\right]\dfrac{1}{\Gamma(\|\alpha\|_\infty)}.
\end{multline*}
Thus, the expression is well-defined for every $t\in[0,T]$ and is continuous at $t=0$.

Let us now show that it is continuous in $(0,T]$. Fix $t_0\in(0,T]$. If $t \in (t_0, T)$, we can deduce
\begin{multline*}
\left\|\dfrac{1}{\Gamma(\alpha(\cdot))}\int_0^t(t-s)^{\alpha(\cdot)-1}\varphi(s,\cdot)\,ds-\dfrac{1}{\Gamma(\alpha(\cdot))}\int_0^{t_0}(t_0-s)^{\alpha(\cdot)-1}\varphi(s,\cdot)\,ds\right\|_{L^p(\Omega)}\\
\leq\int_{t_0}^t\left\|\dfrac{(t-s)^{\alpha(\cdot)-1}}{\Gamma(\alpha(\cdot))}\varphi(s,\cdot)\right\|_{L^p(\Omega)}\,ds\\
+\int_0^{t_0}\left\|\left[\dfrac{(t_0-s)^{\alpha(\cdot)-1}-(t-s)^{\alpha(\cdot)-1}}{\Gamma(\alpha(\cdot))}\right]\varphi(s,\cdot)\right\|_{L^p(\Omega)}\,ds.
\end{multline*}
Therefore, by applying Hölder's inequality and performing a change of variables, we obtain
\begin{multline*}
\left\|\dfrac{1}{\Gamma(\alpha(\cdot))}\int_0^t(t-s)^{\alpha(\cdot)-1}\varphi(s,\cdot)\,ds-\dfrac{1}{\Gamma(\alpha(\cdot))}\int_0^{t_0}(t_0-s)^{\alpha(\cdot)-1}\varphi(s,\cdot)\,ds\right\|_{L^p(\Omega)}\\
\leq\|\varphi\|_{C([0,T];L^p(\Omega))}\left[\int_{0}^{t-t_0}\left\|\dfrac{w^{\alpha(\cdot)-1}}{\Gamma(\alpha(\cdot))}\right\|_{L^\infty(\Omega)}\,dw
\right.\\\left.+\int_0^{t_0}\left\|\left[\dfrac{w^{\alpha(\cdot)-1}-(t-t_0+w)^{\alpha(\cdot)-1}}{\Gamma(\alpha(\cdot))}\right]\right\|_{L^\infty(\Omega)}\,dw\right].
\end{multline*}

Hence, items $(i)-(iv)$ of Theorem \ref{espfunc02} ensure
\begin{multline*}
\left\|\dfrac{1}{\Gamma(\alpha(\cdot))}\int_0^t(t-s)^{\alpha(\cdot)-1}\varphi(s,\cdot)\,ds-\dfrac{1}{\Gamma(\alpha(\cdot))}\int_0^{t_0}(t_0-s)^{\alpha(\cdot)-1}\varphi(s,\cdot)\,ds\right\|_{L^p(\Omega)}\\
\leq\dfrac{\|\varphi\|_{C([0,T];L^p(\Omega))}}{\Gamma(\|\alpha\|_\infty)}\left[\int_{0}^{t-t_0}\big(w^{\alpha_0-1}+w^{\|\alpha\|_\infty-1}\big)\,dw
\right.\\\left.+\int_0^{t_0}\dfrac{\big(w^{\alpha_0-1}-(t-t_0+w)^{\alpha_0-1}\big)}{\alpha_0}+\dfrac{\big(w^{\|\alpha\|_\infty-1}-(t-t_0+w)^{\|\alpha\|_\infty-1}\big)}{\|\alpha\|_\infty}\,dw\right].
\end{multline*}
Finally we achieve the inequality
\begin{multline*}
\left\|\dfrac{1}{\Gamma(\alpha(\cdot))}\int_0^t(t-s)^{\alpha(\cdot)-1}\varphi(s,\cdot)\,ds-\dfrac{1}{\Gamma(\alpha(\cdot))}\int_0^{t_0}(t_0-s)^{\alpha(\cdot)-1}\varphi(s,\cdot)\,ds\right\|_{L^p(\Omega)}\\
\leq\dfrac{\|\varphi\|_{C([0,T];L^p(\Omega))}}{\Gamma(\|\alpha\|_\infty)}\left[\dfrac{(1+\|\alpha\|_\infty)(t-t_0)^{\|\alpha\|_\infty}+{t_0}^{\|\alpha\|_\infty}-t^{\|\alpha\|_\infty}}{\|\alpha\|^2_\infty}\right.\\
\left.+\dfrac{(1+\alpha_0)(t-t_0)^{\alpha_0}+{t_0}^{\alpha_0}-{t}^{\alpha_0}}{\alpha_0^2}\right].
\end{multline*}
It is not difficult to observe that for $t \in (0,t_0)$, we can obtain a similar estimate. Therefore, we conclude the continuity of $J^{\alpha(\cdot)}_t\varphi(t,\cdot)$ with respect to $t$, as desired.

\end{proof}

\begin{remark} Note that the constraint $\alpha(x)\geq\alpha_0>0$ is necessary for the proof of Proposition \ref{auxlemmaprev} to be valid, since otherwise, it would not be possible to apply Theorem \ref{espfunc02}, which would compromise the fact that
{$$\displaystyle \lim_{t \rightarrow0}\int_0^t \left\| \dfrac{(t - s)^{\alpha(x) - 1}}{\Gamma(\alpha(x))} \right\|_{L^\infty(\Omega)}\,ds=0.$$}
Consider, for example, $\Omega=(0,1]\subset\mathbb{R}$, $I=[0,e^{\psi(1)}]\subset\mathbb{R}$, and $\alpha(x)=x$. Here, the function $\psi(x)$ represents the Digamma function (for details on the Digamma function we refer to \cite{AbSt1}). Let $t\in(0,e^{\psi(1)}]$, and observe that
\begin{equation}\label{helpnew01}\int_0^t \left\| \dfrac{(t - s)^{\alpha(x) - 1}}{\Gamma(\alpha(x))} \right\|_{L^{\infty}(\Omega)}ds = \int_0^t \left\| \dfrac{s^{x - 1}}{\Gamma(x)} \right\|_{L^{\infty}(\Omega)}ds.\end{equation}
Also note that for each $s\in(0,t)\subset(0,e^{\psi(1)})$, we have
$$\dfrac{d}{dx}\left(\dfrac{s^{x - 1}}{\Gamma(x)}\right)=\left[\dfrac{s^{x-1}}{\Gamma(x)}\right]\left[{\log(s)-\psi(x)}\right].$$
Recalling that the Digamma function is increasing in $(0,\infty)$ and satisfies
$$\lim_{y\rightarrow0^+}\psi(y)=-\infty\quad\textrm{ and }\quad\lim_{y\rightarrow\infty}\psi(y)=\infty,$$
we deduce the existence of a unique value $x_s\in (0,1)$ such that $\log s=\psi(x_s)$. Hence, we conclude that $(0,1)\ni x\mapsto s^{x - 1}/\Gamma(x)$ is increasing on $(0,x_s)$ and decreasing on $(x_s,1)$, what allows us to compute that
$$\left\| \dfrac{s^{x - 1}}{\Gamma(x)} \right\|_{L^{\infty}(\Omega)}=\dfrac{s^{x_s - 1}}{\Gamma(x_s)}=\dfrac{s^{\psi^{-1}(\log(s))-1}}{\Gamma(\psi^{-1}(\log(s)))}.$$

This, together with \eqref{helpnew01}, leads to
$$\displaystyle \int_0^t \left\| \dfrac{(t - s)^{x - 1}}{\Gamma(x)} \right\|_{L^\infty(\Omega)}\,ds = \displaystyle \int_0^t \dfrac{s^{\psi^{-1}(\ln(s))-1}}{\Gamma(\psi^{-1}(\ln(s)))}\,ds=\int_0^{\psi^{-1}(\ln(t))}\dfrac{e^{w\psi(w)}\psi^\prime(w)}{\Gamma(w)}\,dw.
$$
Now observe that:
\begin{itemize}
\item[(i)] since $t\in(0,e^{\psi(1)})$, we have that $\psi^{-1}(\ln(t))\in(0,1)$;\vspace*{0.2cm}
\item[(ii)] for $w\in(0,1)$ we have the lower bound $w\psi(w)\geq -e^{-1}-1$ (see \cite[Lemma 1]{GuQi1} for details); \vspace*{0.2cm}
\item[(iii)] for $w\in(0,1)$ we have the lower bound $\psi^\prime(w)\geq 1/(2w^2)$ (see \cite[Lemma 1]{GuQi1} for details); \vspace*{0.2cm}
\item[(iv)] for $w\in(0,1)$ we have the upper bound $\Gamma(w+1)\leq (w^2+2)/(w+2)$ (see \cite{AlKw1} for details).\vspace*{0.2cm}
\end{itemize}

Items $(i)-(iv)$ are enough for us to conclude that
$$\displaystyle \int_0^t \left\| \dfrac{(t - s)^{x - 1}}{\Gamma(x)} \right\|_{L^\infty(\Omega)}\,ds \geq \dfrac{e^{(-e^{-1}-1)}}{2}\int_0^{\psi^{-1}(\ln(t))}\dfrac{w+2}{w^3+2w}\,dw=\infty,
$$
for every $t\in(0,e^{\psi(1)})$.
\end{remark}

\begin{proposition}\label{help01}  Let $\alpha_0\in(0,1)$ and assume that $\alpha\in L^\infty(\Omega)$ is such that $\alpha(x)\in[\alpha_0,1]$ for almost every $x\in\Omega$. For $T>0$, $1\leq p\leq\infty$ and $\varphi(t,x)\in C([0,T];L^p(\Omega))$, we have:\vspace*{0.2cm}
    \begin{itemize}
    \item [(i)]  $cD_t^{{\alpha(x)}}\left[J_t^{{\alpha(x)}}\varphi(t,x)\right]=\varphi(t,x),$ for every $t\in [0,T]$ and almost every $x\in\Omega$.\vspace*{0.2cm}
    \item[(ii)] If $\varphi\in C^1([0,T];L^p(\Omega))$, then $cD_t^{\alpha(x)}\varphi(t,x)$ belongs to $C([0,T];L^p(\Omega))$ and satisfies %
        $$cD_t^{\alpha(x)}\varphi(t,x)=J_t^{1-\alpha(x)}\varphi^\prime(t,x),$$
        for every $t\in [0,T]$ and almost every $x\in\Omega$.\vspace*{0.2cm}
    \item [(iii)] If $J_t^{1-\alpha(x)}[\varphi(t,x)-\varphi(0,x)]$ belongs to $C^1([0,T];L^p(\Omega))$, then we may conclude that $cD_t^{\alpha(x)}\varphi(t,x)$ belongs to $C([0,T];L^p(\Omega))$. Moreover, it holds that
    $$J_t^{{\alpha(x)}}\Big[cD_t^{{\alpha(x)}}\varphi(t)\Big]=\varphi(t,x)-\varphi(0,x),$$
    for every $t\in [0,T]$ and almost every $x\in\Omega$.\vspace*{0.2cm}
    \end{itemize}
  \end{proposition}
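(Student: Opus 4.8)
The plan is to reduce all three identities to a small set of elementary facts: the semigroup property \eqref{convsemi}, the fundamental theorem of calculus written as $J_t^1\varphi(t,x)=\int_0^t\varphi(s,x)\,ds$ together with $\frac{d}{dt}\int_0^t\varphi(s,x)\,ds=\varphi(t,x)$ for $\varphi$ continuous in $t$, and the order-lowering rule $\frac{d}{dt}J_t^{\beta(x)}\psi=J_t^{\beta(x)-1}\psi$ valid whenever $\beta(x)\geq1$ and $J_t^{\beta(x)-1}\psi$ is continuous in $t$. This last rule is itself a consequence of the first two, since \eqref{convsemi} gives $J_t^{\beta(x)}\psi=J_t^1[J_t^{\beta(x)-1}\psi]$, which one then differentiates. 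The continuity in $t$ (valued in $L^p(\Omega)$) and the vanishing of fractional integrals at $t=0$ needed to license these manipulations will be drawn from Proposition \ref{auxlemmaprev}.

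For item $(i)$, I set $\psi=J_t^{\alpha(x)}\varphi$; Proposition \ref{auxlemmaprev} gives $\psi\in C([0,T];L^p(\Omega))$ with $\psi(0,x)=0$ for a.e.\ $x$, so that $cD_t^{\alpha(x)}\psi=D_t^{\alpha(x)}\psi=\frac{d}{dt}J_t^{1-\alpha(x)}\psi$. Applying \eqref{convsemi} with exponents $1-\alpha(x)$ and $\alpha(x)$ collapses $J_t^{1-\alpha(x)}J_t^{\alpha(x)}\varphi$ into $J_t^1\varphi=\int_0^t\varphi(s,x)\,ds$, and differentiation in $t$ returns $\varphi(t,x)$, proving $(i)$.

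For item $(ii)$, since $\varphi\in C^1$ the fundamental theorem of calculus gives $\varphi(t,x)-\varphi(0,x)=J_t^1\varphi'(t,x)$, whence \eqref{convsemi} yields $J_t^{1-\alpha(x)}[\varphi-\varphi(0)]=J_t^{2-\alpha(x)}\varphi'$. Because $2-\alpha(x)\geq1$, the order-lowering rule produces $cD_t^{\alpha(x)}\varphi=\frac{d}{dt}J_t^{2-\alpha(x)}\varphi'=J_t^{1-\alpha(x)}\varphi'$, and the membership in $C([0,T];L^p(\Omega))$ follows from the argument of Proposition \ref{auxlemmaprev} applied to the datum $\varphi'$.

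For item $(iii)$, I put $g=\varphi-\varphi(0)$ and $h=J_t^{1-\alpha(x)}g$, which is $C^1$ by hypothesis and satisfies $h(0,x)=0$ for a.e.\ $x$ (since $g(0,x)=0$, while for $\alpha(x)<1$ the fractional integral of positive order vanishes at $t=0$); hence $cD_t^{\alpha(x)}\varphi=h'\in C([0,T];L^p(\Omega))$, giving the continuity claim. For the integral identity I apply $J_t^{\alpha(x)}$ to $h=J_t^{1-\alpha(x)}g$ and use \eqref{convsemi} to get $J_t^{\alpha(x)}h=J_t^1 g$; since $h(0,x)=0$ and $h\in C^1$ one also has $h=J_t^1 h'$, so $J_t^{\alpha(x)}h=J_t^{1+\alpha(x)}h'$. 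Equating these and differentiating in $t$, the order-lowering rule and the fundamental theorem of calculus turn this into $J_t^{\alpha(x)}h'=g$, that is $J_t^{\alpha(x)}[cD_t^{\alpha(x)}\varphi]=\varphi-\varphi(0)$. The main obstacle throughout is not the algebra but the bookkeeping of when \eqref{convsemi} and the order-lowering rule are legitimate: one must secure the continuity in $t$ that makes the fundamental theorem of calculus available at each step, and in item $(ii)$ in particular handle the complementary order $1-\alpha(x)$ near $0$ (i.e.\ $\alpha(x)$ near $1$), where the compensating factor $1/\Gamma(1-\alpha(x))\to0$ must be exploited instead of a naive bound on the kernel.
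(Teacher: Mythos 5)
Your proposal is correct and follows essentially the same route as the paper's proof: item $(i)$ via Proposition \ref{auxlemmaprev} (continuity and vanishing at $t=0$ of $J_t^{\alpha(x)}\varphi$) plus the semigroup identity \eqref{convsemi}; item $(ii)$ via the key identity $J_t^{1-\alpha(x)}\big[\varphi(t,x)-\varphi(0,x)\big]=J_t^{2-\alpha(x)}\varphi'(t,x)$, which the paper obtains by an explicit integration by parts and you by the fundamental theorem of calculus plus \eqref{convsemi} — the same computation; and item $(iii)$ by the same semigroup bookkeeping, with your $h=J_t^1h'$ step playing the role of the paper's appeal to $D_t^{1-\alpha(x)}J_t^{1-\alpha(x)}=\mathrm{id}$. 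The one delicate point you flag — continuity of $J_t^{1-\alpha(x)}\varphi'$ when the order $1-\alpha(x)$ is not bounded below, so that Proposition \ref{auxlemmaprev} does not literally apply and the factor $1/\Gamma(1-\alpha(x))$ must do the work — is treated in the paper exactly as in your proposal, namely by citing Proposition \ref{auxlemmaprev} without carrying out that estimate, so your argument is no less complete than the paper's at this step.
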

  \begin{proof} $(i)$ As a consequence of Proposition \ref{auxlemmaprev}, we know that $J_t^{\alpha(x)}\varphi(t,x)$ exists for every $t\in [0,T]$ and almost every $x\in\Omega$, belongs to $C([0,T];L^p(\Omega))$ and satisfies $J_s^{{\alpha(x)}}\varphi(s,x)\big|_{s=0}=0$ for almost every $x\in\Omega$. Therefore, also exists $J_t^{1-\alpha(x)}\Big[J_t^{\alpha(x)}\varphi(t,x)\Big]$ and belongs to $C([0,T];L^p(\Omega))$. Then, from equation \eqref{convsemi}, it follows that
\begin{multline*}
cD_t^{{\alpha(x)}}\left[J_t^{{\alpha(x)}}\varphi(t,x)\right]=D_t^{{\alpha(x)}}\Big[J_t^{{\alpha(x)}}\varphi(t,x)-J_s^{{\alpha(x)}}\varphi(s,x)\big|_{s=0}\Big]
  \\=\dfrac{d}{dt}\Big\{J_t^{1-\alpha(x)}\left[J_t^{{\alpha(x)}}\varphi(t,x)\right]\Big\}=\varphi(t,x),
\end{multline*}
for every $t\in [0,T]$ and almost every $x\in\Omega$ as desired.\vspace*{0.2cm}

  $(ii)$ First observe that
  \begin{multline*}
  J_t^{{1-\alpha(x)}}\big[\varphi(t,x)-\varphi(0,x)\big]=\dfrac{1}{\Gamma(1-\alpha(x))}\int_0^t(t-s)^{-\alpha(x)}\big[\varphi(s,x)-\varphi(0,x)\big]ds\\
  =-\dfrac{1}{\Gamma(2-\alpha(x))}\int_0^t\dfrac{d}{ds}(t-s)^{1-\alpha(x)}\big[\varphi(s,x)-\varphi(0,x)\big]ds\\
  =\dfrac{1}{\Gamma(2-\alpha(x))}\int_0^t(t-s)^{1-\alpha(x)}\varphi^\prime(s,x)ds=J_t^{{2-\alpha(x)}}\varphi^\prime(t,x),
  \end{multline*}
  for every $t\in [0,T]$ and almost every $x\in\Omega$.

  Therefore, by using equation \eqref{convsemi}, we can deduce that
  $$cD_t^\alpha\varphi(t,x)=J_t^{{1-\alpha(x)}}\varphi^\prime(t,x)$$
  for every $t\in [0,T]$ and almost every $x\in\Omega$, what together with Proposition \ref{auxlemmaprev} establishes the desired conclusions.\vspace*{0.2cm}

  $(iii)$ By applying Proposition \ref{auxlemmaprev} along with items $(i)$ and $(ii)$, we can deduce the identity:
  \begin{multline*}
  J_t^{{\alpha(x)}}\left[cD_t^{{\alpha(x)}}\varphi(t,x)\right]
  =J_t^{{\alpha(x)}}\left\{\dfrac{d}{dt}\left[J_t^{{1-\alpha(x)}}\big(\varphi(t,x)-\varphi(0,x)\big)\right]\right\}\\
  =D_t^{1-\alpha(x)}\left[J_t^{{1-\alpha(x)}}\big(\varphi(t,x)-\varphi(0,x)\big)\right]=\varphi(t,x)-\varphi(0,x),
  \end{multline*}
  for every $t\in [0,T]$ and almost every $x\in\Omega$.
  \end{proof}


\section{Well-Posedness of the Linear Problem}
\label{linearprob}

Consider $1\leq p\leq\infty$, $\Omega$ an open subset of $\mathbb{R}^n$ and $A:L^p(\Omega)\rightarrow L^p(\Omega)$ a bounded linear operator. Given $\alpha\in L^\infty(\Omega)$ satisfying $\alpha(x)\in[\alpha_0,1]$, for some $\alpha_0\in(0,1)$, and $u_0\in L^p(\Omega)$, in this section we investigate the Cauchy problem for the space-dependent fractional evolution equation of order $\alpha(x)$ given by
\begin{subequations}\label{eq1011}
  \begin{align}\label{eq101}&cD_t^{\alpha(x)} u(t,x)=Au(t,x),&& \forall t>0\textrm{ and a.e. }x\in\Omega,\\
\label{eq1101}&u(0,x)=u_0(x),&& \textrm{ a.e. }x\in\Omega.
 \end{align}
\end{subequations}
Recall that the symbol $cD_t^{\alpha(x)}$ stands for the Caputo fractional derivative of order $\alpha(x)$ with respect to the variable $t$, as presented in Section \ref{fracintro}.

While this problem may appear straightforward at first, we show that it presents significant challenges and demands a deep understanding before moving on to more complex cases.

\begin{definition}\label{integralequ1} We say that $\varphi\in C(I;L^p(\Omega))$ is solution of \eqref{eq1011} in $I\times\Omega$ if it satisfies:
\begin{itemize}
\item[(i)] $cD_t^{\alpha(x)} \varphi(t,x)=A\varphi(t,x)$ for every $t\in I$ and almost every $x\in\Omega.$\vspace*{0.2cm}
\item[(ii)] $\varphi(0,x)=u_0(x)$ for almost every $x\in\Omega.$
\end{itemize}
\end{definition}

Now we present a general version of the classical theorem that relates the solutions of integral and differential equations.
\begin{proposition}\label{integralequ}
Let $T>0$ and $\varphi\in C([0,T];L^p(\Omega))$. Then $\varphi(t,x)$ is a solution of \eqref{eq1011} in $[0,T]\times\Omega$ if, and only if, $\varphi(t,x)$ satisfies the integral equation
\begin{equation} \label{eqint}
\varphi(t,x)=u_0(x)+\dfrac{1}{\Gamma(\alpha(x))}\int_{0}^t{(t-s)^{\alpha(x)-1}A\varphi(s,x)}\,ds,
\end{equation}
for every $t \in [0,T]$ and almost every $x \in \Omega$.
\end{proposition}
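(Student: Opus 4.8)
The plan is to reformulate the integral equation \eqref{eqint} in the compact form $\varphi(t,x)=u_0(x)+J_t^{\alpha(x)}\big[A\varphi(t,x)\big]$ and to exploit the inversion identities between $J_t^{\alpha(x)}$ and $cD_t^{\alpha(x)}$ collected in Proposition \ref{help01}. The observation used throughout both implications is that, since $A$ is a bounded linear operator on $L^p(\Omega)$ and $\varphi\in C([0,T];L^p(\Omega))$, the composition $A\varphi$ again belongs to $C([0,T];L^p(\Omega))$. This continuity is precisely the hypothesis required to invoke Propositions \ref{auxlemmaprev} and \ref{help01} with $A\varphi$ playing the role of the generic continuous datum.

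For the implication ``solution $\Rightarrow$ integral equation'', I would start from $cD_t^{\alpha(x)}\varphi(t,x)=A\varphi(t,x)$ and apply $J_t^{\alpha(x)}$ to both sides. On the right-hand side this is harmless; for the left-hand side I want to use item $(iii)$ of Proposition \ref{help01}, which gives $J_t^{\alpha(x)}\big[cD_t^{\alpha(x)}\varphi(t,x)\big]=\varphi(t,x)-\varphi(0,x)$, so that, together with the initial condition $\varphi(0,\cdot)=u_0$, one recovers \eqref{eqint}. The price to pay is verifying the hypothesis of that item, namely that $J_t^{1-\alpha(x)}\big[\varphi(t,x)-\varphi(0,x)\big]$ lies in $C^1([0,T];L^p(\Omega))$. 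This is the main obstacle: by definition $cD_t^{\alpha(x)}\varphi=\frac{d}{dt}J_t^{1-\alpha(x)}[\varphi-\varphi(0,\cdot)]$, and the solution hypothesis forces this time-derivative to coincide with the continuous map $A\varphi$. Since $\varphi-\varphi(0,\cdot)\in C([0,T];L^p(\Omega))$, Proposition \ref{auxlemmaprev} guarantees that $J_t^{1-\alpha(x)}[\varphi-\varphi(0,\cdot)]$ is itself continuous, and the continuity of its derivative $A\varphi$ then upgrades it to a $C^1$ function. The delicate point is to make this passage rigorous at the level of Bochner-valued $C^1$ functions, rather than merely pointwise in $x$, so that Proposition \ref{help01}$(iii)$ genuinely applies.

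For the converse ``integral equation $\Rightarrow$ solution'', I would argue directly. Evaluating \eqref{eqint} at $t=0$ and using Proposition \ref{auxlemmaprev}, which ensures $J_t^{\alpha(x)}[A\varphi]\big|_{t=0}=0$ for almost every $x$, recovers the initial condition $\varphi(0,\cdot)=u_0$ and hence condition $(ii)$ of Definition \ref{integralequ1}. Consequently $\varphi(t,x)-\varphi(0,x)=J_t^{\alpha(x)}[A\varphi(t,x)]$, and because $J_t^{\alpha(x)}[A\varphi]$ vanishes at $t=0$ its Caputo and Riemann--Liouville derivatives coincide. Applying $cD_t^{\alpha(x)}$ to \eqref{eqint} and invoking item $(i)$ of Proposition \ref{help01} --- which states $cD_t^{\alpha(x)}\big[J_t^{\alpha(x)}(A\varphi)\big]=A\varphi$ --- then gives $cD_t^{\alpha(x)}\varphi(t,x)=A\varphi(t,x)$ for every $t\in[0,T]$ and almost every $x\in\Omega$, which is condition $(i)$ of Definition \ref{integralequ1}. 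Together with the forward implication, this establishes the desired equivalence.
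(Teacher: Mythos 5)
Your proposal is correct and follows essentially the same route as the paper's own proof: apply $J_t^{\alpha(x)}$ and use Proposition \ref{help01}$(iii)$ together with the initial condition for the forward implication, and apply $cD_t^{\alpha(x)}$ with Proposition \ref{help01}$(i)$ plus Proposition \ref{auxlemmaprev} (to recover $\varphi(0,\cdot)=u_0$) for the converse. In fact you go slightly further than the paper, which invokes Proposition \ref{help01} without comment, by explicitly verifying its $C^1$ hypothesis --- namely that the solution property forces $\frac{d}{dt}J_t^{1-\alpha(x)}\big[\varphi-\varphi(0,\cdot)\big]$ to coincide with the continuous map $A\varphi$, upgrading $J_t^{1-\alpha(x)}\big[\varphi-\varphi(0,\cdot)\big]$ to a $C^1$ Bochner-valued function.
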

\begin{proof}
Assuming that $\varphi(t,x)$ is a solution of equation \eqref{eq1011}, we can conclude that $A\varphi(t,x)$ belongs to the space $C([0,T];L^p(\Omega))$, since $A\in\mathcal{L}(L^p(\Omega))$. Using this fact and applying the operator $J_t^{\alpha(x)}$ to both sides of equation \eqref{eq101}, along with Propositions \ref{auxlemmaprev} and \ref{help01}, and equation \eqref{eq1101}, we deduce that
\begin{equation*}
\varphi(t,x)=u_0(x)+\dfrac{1}{\Gamma(\alpha(x))}\int_{0}^t{(t-s)^{\alpha(x)-1}A\varphi(s,x)},ds, 
\end{equation*}
forall $t\in[0,T]$ and a.e. $x\in\Omega$.

Conversely, if $\varphi(t,x)$ satisfies \eqref{eqint}, since $A\in\mathcal{L}(L^p(\Omega))$ and therefore $A\varphi(t,x)$ belongs to $C([0,T];L^p(\Omega))$, we can apply $cD_t^\alpha(x)$ on both sides of \eqref{eqint} and use item $(i)$ of Proposition \ref{help01} to obtain \eqref{eq101}.

Finally, Proposition \ref{auxlemmaprev} is enough for us to verify
$$\lim_{t\rightarrow0^+}\|\varphi(t,\cdot)-u_0(\cdot)\|_{L^p(\Omega)}=\lim_{t\rightarrow0^+}\|J_t^{\alpha(\cdot)}\varphi(t,\cdot)\|_{L^p(\Omega)}=0,$$
i.e., that \eqref{eq1101} holds.

Therefore $\varphi(t,x)$ is a solution of the Cauchy problem \eqref{eq1011}.
\end{proof}

Based on the notions and results established thus far in this paper, we are now able to present one of the main results of this section. We should not underestimate the complexity that fractional derivatives and the dependence of the derivative on the spatial variable introduce into the proof of this result. This is the reason we provide a detailed analysis to ensure its understanding.
\begin{theorem}\label{exisuni} The abstract Cauchy problem \eqref{eq1011} has a unique solution in $[0,\infty)\times\Omega$.
\end{theorem}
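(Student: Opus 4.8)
The plan is to recast \eqref{eq1011} as a fixed-point equation, solve it on an arbitrary finite horizon by the Banach contraction principle, and then patch the resulting local solutions into a global one. First I would invoke Proposition \ref{integralequ} to replace the differential problem by the equivalent Volterra integral equation \eqref{eqint}. Fixing $T>0$, I define the operator $\Phi$ on the Banach space $C([0,T];L^p(\Omega))$ by
$$\Phi[\varphi](t,x):=u_0(x)+\dfrac{1}{\Gamma(\alpha(x))}\int_0^t(t-s)^{\alpha(x)-1}A\varphi(s,x)\,ds,$$
so that, by Proposition \ref{integralequ}, solutions of \eqref{eq1011} on $[0,T]\times\Omega$ correspond exactly to fixed points of $\Phi$. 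Since $A\in\mathcal{L}(L^p(\Omega))$ sends $C([0,T];L^p(\Omega))$ into itself and Proposition \ref{auxlemmaprev} guarantees that $J_t^{\alpha(\cdot)}[A\varphi]$ is again continuous in $t$ with values in $L^p(\Omega)$, the map $\Phi$ is well defined and maps $C([0,T];L^p(\Omega))$ into itself.

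The crucial step is a contraction estimate valid for \emph{every} $T$ at once. A naive estimate in the sup-norm produces the contraction constant $\|A\|\big[T^{\alpha_0}/\alpha_0+T^{\|\alpha\|_\infty}/\|\alpha\|_\infty\big]/\Gamma(\|\alpha\|_\infty)$ coming from Proposition \ref{auxlemmaprev}, which is only smaller than $1$ for small $T$. To bypass this, I would equip $C([0,T];L^p(\Omega))$ with the Bielecki-type weighted norm
$$\|\varphi\|_\lambda:=\sup_{t\in[0,T]}e^{-\lambda t}\|\varphi(t,\cdot)\|_{L^p(\Omega)},$$
which is equivalent to the usual sup-norm, so the space stays complete. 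Using items $(i)-(iii)$ of Theorem \ref{espfunc02} to dominate the kernel pointwise in $x$ by $\big[(t-s)^{\alpha_0-1}+(t-s)^{\|\alpha\|_\infty-1}\big]/\Gamma(\|\alpha\|_\infty)$ (exactly as in Corollary \ref{contvar}), inserting the factor $e^{-\lambda(t-s)}e^{-\lambda s}$, and applying Minkowski's integral inequality together with $\int_0^\infty r^{\beta-1}e^{-\lambda r}\,dr=\Gamma(\beta)/\lambda^\beta$, I would obtain
$$\|\Phi[\varphi_1]-\Phi[\varphi_2]\|_\lambda\leq\dfrac{\|A\|}{\Gamma(\|\alpha\|_\infty)}\left[\dfrac{\Gamma(\alpha_0)}{\lambda^{\alpha_0}}+\dfrac{\Gamma(\|\alpha\|_\infty)}{\lambda^{\|\alpha\|_\infty}}\right]\|\varphi_1-\varphi_2\|_\lambda.$$
Because $\alpha_0,\|\alpha\|_\infty>0$, the bracketed constant tends to $0$ as $\lambda\to\infty$, so for $\lambda$ large enough $\Phi$ is a contraction and admits a unique fixed point $\varphi_T\in C([0,T];L^p(\Omega))$.

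Finally I would pass from finite horizons to $[0,\infty)$. For $T'>T$, the restriction to $[0,T]$ of the solution $\varphi_{T'}$ again solves \eqref{eqint} on $[0,T]$, since $\Phi[\varphi](t,\cdot)$ depends only on the values of $\varphi$ on $[0,t]$; by the uniqueness just established, $\varphi_{T'}|_{[0,T]}=\varphi_T$. These consistent restrictions glue into a single $u\in C([0,\infty);L^p(\Omega))$ solving the integral equation on each $[0,T]$, and Proposition \ref{integralequ} then identifies $u$ as the unique solution of \eqref{eq1011} on $[0,\infty)\times\Omega$.

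I expect the main obstacle to be precisely the contraction estimate. Because the fractional-integration kernel depends on $x$ while $A$ acts globally in $L^p(\Omega)$, the operators $J_t^{\alpha(\cdot)}$ and $A$ do not commute, so the constant-order trick of collapsing iterated integrals via the semigroup identity \eqref{convsemi} into factorially small Picard terms is unavailable. The weighted-norm argument, fed by the uniform pointwise bounds of Theorem \ref{espfunc02}, is exactly what sidesteps this difficulty and yields global well-posedness in one stroke.
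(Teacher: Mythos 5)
Your proposal is correct, but it takes a genuinely different route from the paper's. The paper makes a single small step explicit: it fixes
$\tau=\min\big\{\big(\alpha_0\Gamma(\|\alpha\|_{L^\infty(\Omega)})/(2\|A\|_{\mathcal{L}(L^p(\Omega))})\big)^{1/\alpha_0},1/2\big\}$
so that, with $t-s\leq 1$ and only item $(ii)$ of Theorem \ref{espfunc02}, the contraction constant is at most $1/2$ on $[0,\tau]$; it then continues the solution step by step through the sets $H_{k\tau}$ of continuous functions coinciding with the already-constructed solution, using the Volterra structure of \eqref{mainform01} to verify $T(H_{k\tau})\subset H_{k\tau}$, and obtains the global solution by iterating this countably many times. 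You instead renorm $C([0,T];L^p(\Omega))$ with the Bielecki weight $e^{-\lambda t}$ and exploit $\int_0^\infty r^{\beta-1}e^{-\lambda r}\,dr=\Gamma(\beta)\lambda^{-\beta}$, which makes the contraction modulus
$\|A\|_{\mathcal{L}(L^p(\Omega))}\big[\Gamma(\alpha_0)\lambda^{-\alpha_0}+\Gamma(\|\alpha\|_{L^\infty(\Omega)})\lambda^{-\|\alpha\|_{L^\infty(\Omega)}}\big]/\Gamma(\|\alpha\|_{L^\infty(\Omega)})$
small for large $\lambda$, uniformly in $T$; this yields the solution on every finite horizon in one application of Banach's theorem, after which globality reduces to the trivial consistency of restrictions (the same Volterra structure the paper uses for $H_{k\tau}$). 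Both arguments are fed by the identical kernel bounds, namely items $(i)$--$(iii)$ of Theorem \ref{espfunc02} dominating $(t-s)^{\alpha(x)-1}/\Gamma(\alpha(x))$ by $\big[(t-s)^{\alpha_0-1}+(t-s)^{\|\alpha\|_{L^\infty(\Omega)}-1}\big]/\Gamma(\|\alpha\|_{L^\infty(\Omega)})$, and your estimate is computed correctly (the ``Minkowski'' step is really just the triangle inequality for Bochner integrals plus the $L^\infty$--$L^p$ H\"older bound, as in Proposition \ref{auxlemmaprev}). Interestingly, the paper's remark after the theorem contemplates only the alternative of showing a power of $T$ is a contraction and dismisses it as unwieldy; your equivalent-norm device is precisely the classical way to sidestep that difficulty. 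What the paper's method buys is an explicit step size and weight-free estimates; what yours buys is brevity and a horizon-independent contraction constant, at the modest cost of introducing the weighted norm, whose equivalence to the sup-norm (and hence completeness) you correctly note holds on finite intervals --- it would fail on $[0,\infty)$ itself, which is why your final gluing step is still needed.
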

\begin{proof}
Define the value
$$\tau = \min \left\{ \left( \dfrac{\alpha_0\Gamma(\| \alpha \|_{L^{\infty}(\Omega)})}{2 \| A \|_{\mathcal{L}(L^p(\Omega))}} \right)^{\frac{1}{\alpha_0}} , \dfrac{1}{2} \right\},$$
and observe that we may consider $T: C(\left[ 0, \tau \right]; L^p(\Omega)) \longrightarrow C(\left[ 0, \tau \right]; L^p(\Omega))$ given by:
\begin{equation}\label{mainform01}T\varphi(t,x) := u_0(x) + \displaystyle \int_0 ^{t} \dfrac{(t - s)^{\alpha(x) - 1}}{\Gamma(\alpha(x))}A\varphi(s,x)ds.\end{equation}
It worths to emphasize that $T$ is well-defined thanks to Proposition \ref{auxlemmaprev}. Therefore, for $\varphi_1,\varphi_2\in C(\left[ 0, \tau \right]; L^p(\Omega))$, we can use the items $(i)$ and $(ii)$ of Theorem \ref{espfunc02} to obtain
\begin{multline*}
    \| T\varphi_1(t,\cdot) - T\varphi_2(t,\cdot) \|_{L^p(\Omega)} \\ \leq \displaystyle \| A \|_{\mathcal{L}(L^p(\Omega))}  \| \varphi_1 -\varphi_2 \|_{C([0,\tau];L^p(\Omega))}\int_{0} ^{t} \left\| \dfrac{(t - s)^{\alpha(\cdot) - 1}}{\Gamma(\alpha(\cdot))} \right\|_{L^{\infty}(\Omega)}  ds \\
     \leq \displaystyle \| A \|_{\mathcal{L}(L^p(\Omega))}  \| \varphi_1 -\varphi_2\|_{C([0,\tau];L^p(\Omega))} \int_{0} ^{t} \dfrac{w^{\alpha_0-1}}{\Gamma(\|\alpha\|_{L^\infty(\Omega)})}  dw \\
    \leq \displaystyle   \underbrace{\dfrac{\tau^{\alpha_0}\| A \|_{\mathcal{L}(L^p(\Omega))}  }{\alpha_0\Gamma(\|\alpha\|_{L^\infty(\Omega)})}}_{\leq1/2}\| \varphi_1 -\varphi_2\|_{C([0,\tau];L^p(\Omega))},
\end{multline*}
for every $t\in[0,\tau]$.

As a result, we can deduce that the operator defined by the equation \eqref{mainform01} is a contraction mapping and, consequently, possesses a unique fixed point $u_1 \in C(\left[ 0, \tau \right]; L^p(\Omega))$.

Now, if we consider the set
$$H_{2\tau}=\Big\{ \varphi \in C(\left[ 0, 2\tau \right]; L^p(\Omega)) : \varphi(t,x) = u_1(t,x),\forall t\in[0,\tau]\textrm{ and a.e. }x\in\Omega\Big\},$$
Proposition \ref{auxlemmaprev} allows us to consider $T: H_{2\tau} \longrightarrow C(\left[ 0, 2\tau \right]; L^p(\Omega))$ with the same formula given in \eqref{mainform01}. Furthermore, note that for every $\varphi\in H_{2\tau}$, we have that
$$T\varphi(t,x)=Tu_1(t,x)=u_1(t,x)$$
for every $t\in[0,\tau]$ and a.e. $x\in\Omega$. So, we have $T: H_{2\tau} \longrightarrow H_{2\tau}$.

Using a similar argument as before, we can prove that for $\varphi_1, \varphi_2\in H_{2\tau}$, we have
$$\| T\varphi_1(t,\cdot) - T\varphi_2(t,\cdot) \|_{L^p(\Omega)}  \leq \displaystyle   \underbrace{\dfrac{\tau^{\alpha_0}\| A \|_{\mathcal{L}(L^p(\Omega))}  }{\alpha_0\Gamma(\|\alpha\|_{L^\infty(\Omega)})}}_{\leq1/2}\| \varphi_1 -\varphi_2\|_{C([0,2\tau];L^p(\Omega))},$$
for every $t\in[0,2\tau]$.

Therefore, we conclude that $T: H_{2\tau} \longrightarrow H_{2\tau}$ is a contraction mapping, and it has a unique fixed point $u_2\in C(\left[ 0, 2\tau \right]; L^p(\Omega))$ satisfying $u_2(t,x)=u_1(t,x)$ for every $t\in[0,\tau]$ and almost every $x\in\Omega$.

Repeating this process and applying Proposition \ref{integralequ}, we can establish the existence of a unique solution $u\in C([0,\infty);L^p(\Omega))$ to problem \eqref{eq1011} in $[0,\infty)\times\Omega$, as desired.
\end{proof}

\begin{remark} We emphasize that, in our view, the proof of Theorem \ref{exisuni} is already in its simplest form. A more standard approach would involve attempting to prove that a power of $T$ is a contraction, rather than restricting ourselves to a specific choice of $\tau \in (0,1/2)$. However, this method would have posed significant difficulties in deriving a formula to estimate the constant necessary for defining the contraction, making the proof of the theorem extremely difficult to complete.
\end{remark}

Our objective now is to better understand the solution obtained in Theorem \ref{exisuni} for problem \eqref{eq1011}. It is worth recalling that when $\alpha(x)$ is constant (i.e. $\alpha(x) = \alpha \in (0,1)$), the solution to the abstract fractional Cauchy problem
  \begin{align*}&cD_t^{\alpha} u(t,x)=Au(t,x),&& \forall t>0\textrm{ and a.e. }x\in\Omega,\\
&u(0,x)=u_0(x),&& \textrm{a.e. }x\in\Omega,
 \end{align*}
can be expressed in terms of the Mittag-Leffler function as
\begin{equation}\label{classicalmittag}u(t,x;u_0(x))=E_{\alpha}(t^\alpha A)u_0 (x):= \displaystyle \sum_{k = 0}^{\infty} \dfrac{t^{\alpha k} A^k u_0(x)}{\Gamma(\alpha k + 1)},\end{equation}
for every $t\in[0,\infty)$ and almost every $x\in\Omega$. Based on this formula, it is reasonable to assume that the unique solution to problem \eqref{eq1011} could be given by
\begin{equation}\label{novaserie1}
E_{\alpha(x)}\big(t^{\alpha(x)}A\big)u_0(x) := \sum_{k=0}^{\infty} \frac{t^{\alpha(x)k}A^ku_0(x)}{\Gamma(\alpha(x)k + 1)},
\end{equation}
for every $t\geq0$ and almost every $x\in\Omega$. With that in mind, before attempting any proofs, we should first ensure that the series above converges.

\begin{theorem}\label{convergentgeneral} The series defined in \eqref{novaserie1} is convergent in $L^p(\Omega)$ for every $t\geq0$. Moreover, there exists $M>0$ such that
\begin{multline}\label{202502171549}\big\|E_{\alpha(\cdot)}\big(t^{\alpha(\cdot)}A\big)u_0(\cdot)\big\|_{L^p(\Omega)}\\\leq \left\{\begin{array}{ll}Me^{\|A\|^{1/\alpha_0}_{\mathcal{L}(L^p(\Omega))}t}\| {u_0} \|_{L^p(\Omega)},&\textrm{if }t\in[0,1],\vspace*{0.2cm}\\
Me^{\|A\|^{1/\alpha_0}_{\mathcal{L}(L^p(\Omega))}t^{\|\alpha\|_{L^\infty(\Omega)}/\alpha_0}}\| {u_0} \|_{L^p(\Omega)},&\textrm{if }t\in(1,\infty).\end{array}\right.\end{multline}
for every $t\geq0$.
\end{theorem}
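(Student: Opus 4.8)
The plan is to prove absolute convergence of the operator series in $L^p(\Omega)$ by a term-by-term estimate that collapses onto a scalar Mittag-Leffler function of the constant order $\alpha_0$. First I would estimate the general term. For fixed $t$ and $k$, the quantity $t^{\alpha(x)k}/\Gamma(\alpha(x)k+1)$ is a scalar-valued function of $x$, so it acts as an $L^\infty(\Omega)$-multiplier on the fixed element $A^k u_0\in L^p(\Omega)$; combined with $\|A^k u_0\|_{L^p(\Omega)}\le\|A\|^k_{\mathcal{L}(L^p(\Omega))}\|u_0\|_{L^p(\Omega)}$ this gives
\[
\left\|\frac{t^{\alpha(\cdot)k}A^k u_0(\cdot)}{\Gamma(\alpha(\cdot)k+1)}\right\|_{L^p(\Omega)}\le \left\|\frac{t^{\alpha(\cdot)k}}{\Gamma(\alpha(\cdot)k+1)}\right\|_{L^\infty(\Omega)}\|A\|^k_{\mathcal{L}(L^p(\Omega))}\|u_0\|_{L^p(\Omega)}.
\]
It then suffices to bound the scalar series $\sum_{k\ge0}\|A\|^k_{\mathcal{L}(L^p(\Omega))}\big\|t^{\alpha(\cdot)k}/\Gamma(\alpha(\cdot)k+1)\big\|_{L^\infty(\Omega)}$; once this converges, completeness of $L^p(\Omega)$ yields the convergence claimed for \eqref{novaserie1}.

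The heart of the argument is the pointwise-in-$x$ control of $t^{\alpha(x)k}/\Gamma(\alpha(x)k+1)$, which I would split according to the size of $t$. Using the monotonicity of $t\mapsto t^{\alpha(x)k}$ together with $\alpha_0\le\alpha(x)\le\|\alpha\|_{L^\infty(\Omega)}$, I get $t^{\alpha(x)k}\le t^{\alpha_0 k}$ when $t\in[0,1]$ and $t^{\alpha(x)k}\le t^{\|\alpha\|_{L^\infty(\Omega)}k}$ when $t>1$. For the denominator I would lower-bound $\Gamma(\alpha(x)k+1)$ by $\Gamma(\alpha_0 k+1)$, which is legitimate as soon as both arguments lie in the region where the Gamma function is increasing, i.e. for $k\ge N:=\lceil(\eta-1)/\alpha_0\rceil$, since then $\alpha(x)k+1\ge\alpha_0 k+1\ge\eta$ (the integer $N$ is controlled uniformly in $x$ precisely because $\alpha(x)\ge\alpha_0$). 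Summing the tails $k\ge N$ reproduces, in each regime, the constant-order Mittag-Leffler function $E_{\alpha_0}(z):=\sum_{k\ge0}z^k/\Gamma(\alpha_0 k+1)$ evaluated at $z=t^{\alpha_0}\|A\|_{\mathcal{L}(L^p(\Omega))}$ (for $t\le1$) or at $z=t^{\|\alpha\|_{L^\infty(\Omega)}}\|A\|_{\mathcal{L}(L^p(\Omega))}$ (for $t>1$).

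To close the estimate I would invoke the classical asymptotic bound $E_{\alpha_0}(z)\le M e^{z^{1/\alpha_0}}$, valid for every $z\ge0$ with a constant $M$ depending only on $\alpha_0$; this holds because $z\mapsto E_{\alpha_0}(z)e^{-z^{1/\alpha_0}}$ is continuous on $[0,\infty)$ and tends to the finite limit $1/\alpha_0$ as $z\to\infty$, hence is bounded. Substituting $z=t^{\alpha_0}\|A\|_{\mathcal{L}(L^p(\Omega))}$ produces the exponent $t\,\|A\|^{1/\alpha_0}_{\mathcal{L}(L^p(\Omega))}$ for $t\le1$, while $z=t^{\|\alpha\|_{L^\infty(\Omega)}}\|A\|_{\mathcal{L}(L^p(\Omega))}$ produces $t^{\|\alpha\|_{L^\infty(\Omega)}/\alpha_0}\|A\|^{1/\alpha_0}_{\mathcal{L}(L^p(\Omega))}$ for $t>1$, exactly matching \eqref{202502171549}. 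The finitely many head terms $0\le k<N$, where the comparison with $\Gamma(\alpha_0 k+1)$ fails, I would handle separately through the global lower bound $\Gamma(\alpha(x)k+1)\ge\Gamma(\eta)>0$ (cf. Theorem \ref{espfunc02}$(i)$): they contribute a bounded constant when $t\le1$ and a polynomial in $t$ when $t>1$, each dominated by the corresponding exponential and absorbed into $M$.

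The step I expect to be the main obstacle is the non-monotonicity of the Gamma function near its minimizer $\eta$, which prevents the clean comparison $\Gamma(\alpha(x)k+1)\ge\Gamma(\alpha_0 k+1)$ for the low-order terms and forces the separate, uniform-in-$x$ treatment of the first $N$ terms; the delicate point is ensuring that $N$, and hence the polynomial correction in $t$, can be chosen independently of $x$, which is secured by the uniform lower bound $\alpha(x)\ge\alpha_0$ for almost every $x\in\Omega$.
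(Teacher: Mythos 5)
Your proposal is correct and follows essentially the same route as the paper: the same $L^\infty$-multiplier (H\"older) bound on each term, the same split into $t\in[0,1]$ versus $t>1$ using the monotonicity bounds of Theorem \ref{espfunc02}, the same separation of finitely many head terms (controlled via $\Gamma(\alpha(x)k+1)\geq\Gamma(\eta)$) from the tail compared with $E_{\alpha_0}$, and the same final exponential bound $E_{\alpha_0}(z)\leq Me^{z^{1/\alpha_0}}$, which the paper cites from \cite[Proposition 3.6]{gokima} while you justify it directly from the asymptotics of the Mittag-Leffler function. Your only additions — the explicit threshold $N=\lceil(\eta-1)/\alpha_0\rceil$ and the remark that the head sum grows polynomially in $t$ and is absorbed into $M$ — make uniform-in-$x$ points the paper leaves implicit, but do not change the argument.
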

\begin{proof}Since $1/\Gamma(z)$ is decreasing on $(\eta, \infty)$ (recall Theorem \ref{espfunc02}), there exists $k_0^* \in \mathbb{N}$ such that
\begin{equation*}
    \dfrac{1}{\Gamma(\alpha(x)n + 1)} \leq \dfrac{1}{\Gamma(\alpha_0 n + 1)}, \quad \forall n \geq k_0^*, \quad \textrm{a.e.} \hspace{0.2cm} x \in \Omega.
\end{equation*}

On the other hande, Hölder's inequality guarantees
$$\left\|\dfrac{t^{\alpha(\cdot)n}A^n {u_0}(\cdot)}{\Gamma(\alpha(\cdot)n + 1)} \right\|_{L^{p}(\Omega)} \leq \left\| \dfrac{t^{\alpha(\cdot)n}}{\Gamma(\alpha(\cdot)n + 1)} \right\|_{L^{\infty}(\Omega)}\left\|A\right\|_{\mathcal{L}(L^p(\Omega))}^n \left\|{u_0}\right\|_{L^{p}(\Omega)}, $$
for every $t\geq0$. Now, let us split the remainder of the proof into two cases.

\underline{Case 1:} For $0 < t \leq 1$, items $(i)$ and $(ii)$ from Theorem \ref{espfunc02} ensure that
\begin{multline*}
    \left\| \sum_{n = 0}^{\infty} \dfrac{t^{\alpha(x)n}A^n {u_0}(x)}{\Gamma(\alpha(x)n + 1)} \right\|_{L^{p}(\Omega)} \\
     \leq \left[\sum_{n = 0}^{k_0^*} \dfrac{t^{\alpha_0 n}\| A \|_{\mathcal{L}(L^p(\Omega))}^n }{\Gamma(\eta)}\right]\| {u_0} \|_{L^p(\Omega)} + \left[\sum_{n =k_0^*}^{\infty} \dfrac{t^{\alpha_0 n}\| A \|_{\mathcal{L}(L^p(\Omega))}^n}{\Gamma(\alpha_0n + 1)}\right] \| {u_0} \|_{L^p(\Omega)}.
\end{multline*}
Therefore, we may conclude 
\begin{multline*}
    \left\| \sum_{n = 0}^{\infty} \dfrac{t^{\alpha(x)n}A^n {u_0}(x)}{\Gamma(\alpha(x)n + 1)} \right\|_{L^{p}(\Omega)} \leq \left[\sum_{n = 0}^{k_0^*} \dfrac{t^{\alpha_0 n}\| A \|_{\mathcal{L}(L^p(\Omega))}^n }{\Gamma(\eta)}\right]\| {u_0} \|_{L^p(\Omega)}\\ + E_{\alpha_0}(t^{\alpha_0} \| A \|_{\mathcal{L}(L^p(\Omega))})\| {u_0} \|_{L^p(\Omega)}<\infty,
\end{multline*}
for every $0 < t \leq 1$.

\underline{Case 2:} For $t > 1$, items $(i)$ and $(iii)$ from Theorem \ref{espfunc02}, along with a similar argument as above, justify that
\begin{multline*}
    \left\| \sum_{n = 0}^{\infty} \dfrac{t^{\alpha(x)n}A^n {u_0}(x)}{\Gamma(\alpha(x)n + 1)} \right\|_{L^{p}(\Omega)} \leq \left[\sum_{n = 0}^{k_0^*} \dfrac{t^{\|\alpha\|_{L^{\infty}(\Omega)}n}\| A \|_{\mathcal{L}(L^p(\Omega))}^n }{\Gamma(\eta)}\right]\| {u_0} \|_{L^p(\Omega)} \\
   + E_{\alpha_0}(t^{\|\alpha\|_{L^{\infty}(\Omega)}}\| A \|_{\mathcal{L}(L^p(\Omega))})\| {u_0} \|_{L^p(\Omega)}<\infty.
\end{multline*}

Now recall that \cite[Proposition 3.6]{gokima} ensures there exists $\tilde{M}>0$ such that for any $\beta\in(0,1]$ and $t\geq0$, it holds that
\begin{equation}\label{expmittag}E_\beta(t)\leq \tilde{M}e^{t^{1/\beta}}.\end{equation}
This, along with the conclusions of the Cases 1 and 2, shows that there exists $M\geq0$ such that \eqref{202502171549} holds.
\end{proof}

Even though Theorem \ref{convergentgeneral} ensures that the series defined in \eqref{novaserie1} is always convergent for \(t \geq 0\), it does not always represent the unique solution to problem \eqref{eq1011}, which is guaranteed to exist by Theorem \ref{exisuni}. Below, we present a theorem that provides sufficient conditions for this to be true.

\begin{theorem} \label{commute theorem}If for each $\phi\in L^p(\Omega)$, it holds that
\begin{equation}\label{comentrei}\left(\frac{s^{\alpha(x)}}{\Gamma(\alpha(x) + 1)}\right)A\phi(x)=A\left(\frac{s^{\alpha(x)}}{\Gamma(\alpha(x) + 1)}\phi(x)\right),\end{equation}
for every $s\geq0$ and almost every $x\in\Omega$, then the solution of problem \eqref{eq1011} is given by \eqref{novaserie1}.
\end{theorem}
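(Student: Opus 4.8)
Because Theorem~\ref{exisuni} already guarantees that \eqref{eq1011} possesses a unique solution, and Theorem~\ref{convergentgeneral} guarantees that the series \eqref{novaserie1} converges in $L^p(\Omega)$ for every $t\ge0$, the plan is simply to verify that the function
$$u(t,x):=\sum_{k=0}^\infty\frac{t^{\alpha(x)k}A^ku_0(x)}{\Gamma(\alpha(x)k+1)}$$
satisfies the integral equation \eqref{eqint}. Once this is done, Proposition~\ref{integralequ} identifies $u$ as a solution of \eqref{eq1011}, and the uniqueness afforded by Theorem~\ref{exisuni} forces $u$ to be \emph{the} solution.

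The main preliminary step is that hypothesis \eqref{comentrei} only provides commutation of $A$ with multiplication by the single function $m_s(x):=s^{\alpha(x)}/\Gamma(\alpha(x)+1)$, whereas the verification will require $A$ to commute with multiplication by $s^{\alpha(x)k}/\Gamma(\alpha(x)k+1)$ for every integer $k\ge0$. To bridge this gap, write $M_g\phi:=g\phi$ for the multiplication operator and argue as follows. At $s=1$ the multiplier is $m_1(x)=1/\Gamma(\alpha(x)+1)$, which is bounded and bounded away from $0$ because $\Gamma$ is continuous and strictly positive on $[1+\alpha_0,2]$; hence $M_{m_1}$ is boundedly invertible and $A$ commutes with $M_{m_1}^{-1}$. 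Consequently $A$ commutes with $M_{m_s}M_{m_1}^{-1}=M_{s^{\alpha(x)}}$ for every $s>0$, that is, with $M_{e^{c\alpha(x)}}$ for every $c\in\mathbb{R}$. Since $\alpha$ takes values in the compact interval $[\alpha_0,1]$ and the exponentials $\{e^{c\alpha}\}_{c\in\mathbb{R}}$ form a point-separating algebra containing the constants, the Stone--Weierstrass theorem shows that their linear combinations approximate any $f\in C([\alpha_0,1])$ uniformly; as $\|M_g\|_{\mathcal{L}(L^p(\Omega))}=\|g\|_{L^\infty(\Omega)}$ and commutation with $A$ passes to operator-norm limits, it follows that $A$ commutes with $M_{f(\alpha(x))}$ for every continuous $f$. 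Taking $f(\alpha)=s^{\alpha k}/\Gamma(\alpha k+1)$ yields the required commutation for each $k$ and $s$.

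With this in hand I would substitute $u$ into the right-hand side of \eqref{eqint}. Boundedness of $A$ lets me interchange $A$ with the $L^p(\Omega)$-convergent sum, and the commutation just established lets me push $A$ through each multiplier, producing $Au(s,x)=\sum_{k\ge0}\big(s^{\alpha(x)k}/\Gamma(\alpha(x)k+1)\big)A^{k+1}u_0(x)$. The locally uniform convergence from Theorem~\ref{convergentgeneral} justifies interchanging the series with the $ds$-integral, reducing the computation to the elementary Beta integrals
$$\frac{1}{\Gamma(\alpha(x))}\int_0^t(t-s)^{\alpha(x)-1}s^{\alpha(x)k}\,ds=\frac{\Gamma(\alpha(x)k+1)}{\Gamma(\alpha(x)(k+1)+1)}\,t^{\alpha(x)(k+1)}.$$
Substituting this value and reindexing with $j=k+1$ collapses the integral term to $\sum_{j\ge1}\big(t^{\alpha(x)j}/\Gamma(\alpha(x)j+1)\big)A^ju_0(x)$; adding the free term $u_0(x)$, which is exactly the $j=0$ summand, reproduces $u(t,x)$. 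Hence $u$ satisfies \eqref{eqint}.

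The hardest part is the commutation upgrade in the second paragraph: deducing from the single-multiplier hypothesis \eqref{comentrei} that $A$ commutes with multiplication by every continuous function of $\alpha(x)$. An alternative to Stone--Weierstrass is to differentiate the relation $AM_{s^{\alpha(x)}}=M_{s^{\alpha(x)}}A$ in $s$ at $s=1$ to obtain commutation with $M_{\alpha(x)}$, and hence with $M_{P(\alpha(x))}$ for polynomials $P$, but this requires justifying $L^p$-differentiability in $s$. Everything after the upgrade is routine: the Beta-integral evaluation and the reindexing bookkeeping, followed by the appeals to Proposition~\ref{integralequ} and Theorem~\ref{exisuni}.
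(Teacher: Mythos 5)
Your plan is correct, but it takes a genuinely different route from the paper. The paper proves Theorem~\ref{commute theorem} by running the Picard iteration $U_0=u_0$, $U_n=u_0+J_t^{\alpha(x)}AU_{n-1}$, using \eqref{comentrei} together with the Beta integral to show inductively that $U_n$ equals the $n$-th partial sum of \eqref{novaserie1}, and then identifying $\lim_{n}U_n$ with the unique fixed point furnished by Theorem~\ref{exisuni}; you instead substitute the full series into the integral equation \eqref{eqint}, evaluate the same Beta integrals, reindex, and invoke Proposition~\ref{integralequ} plus uniqueness. The two arguments are cousins --- both ultimately rest on commuting $A$ past multiplication by $s^{k\alpha(x)}/\Gamma(k\alpha(x)+1)$ for every $k$ and on the identity $\frac{1}{\Gamma(\alpha)}\int_0^t(t-s)^{\alpha-1}s^{\alpha k}\,ds=\frac{\Gamma(\alpha k+1)}{\Gamma(\alpha(k+1)+1)}t^{\alpha(k+1)}$ --- but yours trades the paper's induction for one interchange of series and integral, which is harmless since the term-by-term bounds from the proof of Theorem~\ref{convergentgeneral} are summable and the kernel estimate of Theorem~\ref{espfunc02} is integrable in $s$. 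Where your write-up genuinely improves on the paper is the commutation upgrade: the paper dispatches the passage from \eqref{comentrei} to commutation with $s^{k\alpha(x)}/\Gamma(k\alpha(x)+1)$ as ``straightforward algebraic manipulations,'' yet pure algebra on the hypothesized multipliers only yields functions of the form $\sigma^{k\alpha(x)}/\Gamma(\alpha(x)+1)^k$, and the correction factor $\Gamma(\alpha(x)+1)^k/\Gamma(k\alpha(x)+1)$ is \emph{not} of the hypothesized form; your argument --- invert $M_{1/\Gamma(\alpha(\cdot)+1)}$ at $s=1$ (legitimate, since $\Gamma(\alpha(x)+1)$ is bounded above and away from zero for $\alpha(x)\in[\alpha_0,1]$), deduce commutation with $M_{s^{\alpha(\cdot)}}=M_{e^{c\alpha(\cdot)}}$ for all $c\in\mathbb{R}$, then use Stone--Weierstrass together with $\|M_{g(\alpha(\cdot))}\|_{\mathcal{L}(L^p(\Omega))}\leq\sup_{[\alpha_0,1]}|g|$ and stability of commutation under operator-norm limits --- closes this gap rigorously for every continuous function of $\alpha(x)$, which covers both your verification and the paper's induction. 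Two cosmetic points only: to apply Proposition~\ref{integralequ} you should record that the candidate series belongs to $C([0,T];L^p(\Omega))$, which follows from the Weierstrass M-test with the same bounds (each term $t\mapsto t^{\alpha(\cdot)k}A^ku_0(\cdot)/\Gamma(\alpha(\cdot)k+1)$ is continuous into $L^p(\Omega)$ and the tails are uniformly small on compacts); and only the inequality $\|M_g\|\leq\|g\|_{L^\infty(\Omega)}$, not the equality you state, is needed for the limiting argument.
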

\begin{proof}
Let $U_0(t, x) = u_0(x)$ and, for $n \geq 1$, define functions
$$U_{n}(t, x) = u_0(x)+\displaystyle \int_0^{t} \dfrac{(t-s)^{\alpha(x) - 1}}{\Gamma(\alpha(x))} AU_{n-1}(s,x) ds.$$
Note that, due to Theorem \ref{exisuni} and the uniqueness of the fixed point for the function $T: C(\left[ 0, k\tau \right]; L^p(\Omega)) \longrightarrow C(\left[ 0, k\tau \right]; L^p(\Omega))$ defined in \eqref{mainform01}, for any $k\in\mathbb{N}$, the unique solution $u(t,x)$ of problem \eqref{eq1011} satisfies
$$u(t,x) = \lim_{n\rightarrow\infty}U_n(t,x),$$
in the topology of $L^p(\Omega)$, uniformly on compact subsets of $[0,\infty)$.

However, by considering the fundamental identity \eqref{comentrei} and performing straightforward algebraic manipulations, we can establish the identity
$$U_n(t,x) = \displaystyle \sum_{k = 0}^{n} \dfrac{t^{k\alpha(x)}A^k u_0(x)}{\Gamma(k\alpha(x) + 1)},$$
for every $n\in\mathbb{N}$, as is standard in classical theory. Therefore
$$
		u(t, x) = \displaystyle \displaystyle \sum_{k = 0}^{\infty} \dfrac{t^{k\alpha(x)}A^k u_0(x)}{\Gamma(k\alpha(x) + 1)}= E_{\alpha(x)}(t^{\alpha(x)}A)u_0(x).
$$
\end{proof}

Below, we provide an example to illustrate a situation where we can apply Theorem \ref{commute theorem} to obtain the representation \eqref{novaserie1} for our solution.

\begin{example}
Let $\psi \in L^{\infty}(\Omega)$ be fixed and define $A: L^p(\Omega) \longrightarrow L^p(\Omega)$ by $A(\phi(x)) = \psi(x) \phi(x).$ Notice that for every $s \geq 0$ we have
\begin{multline*}
	A\left(\dfrac{s^{\alpha(x)}}{\Gamma(\alpha(x) + 1)}  \phi(x) \right)  = \psi(x) \left(\dfrac{s^{\alpha(x)}}{\Gamma(\alpha(x) + 1)} \phi(x)\right) = \\
	= \left(\dfrac{s^{\alpha(x)}}{\Gamma(\alpha(x) + 1)}\right) \psi(x) \phi(x)
	 = \left(\dfrac{s^{\alpha(x)}}{\Gamma(\alpha(x) + 1)}\right) A(\phi(x)),
\end{multline*}
for almost every $x\in\Omega$. Therefore, Theorem \ref{commute theorem} ensures that the solution of problem \eqref{eq1011} satisfies
$$u(t,x)=E_{\alpha(x)}(t^{\alpha(x)}A)(u_0(x))=\sum_{k = 0}^{\infty} \dfrac{t^{k\alpha(x)}\psi^k(x) u_0(x)}{\Gamma(k\alpha(x) + 1)},$$
for every $t\geq0$ and almost every $x\in\Omega$.
\end{example}

Finally, we present an example in which we cannot apply Theorem \ref{commute theorem}, making its solution quite complex. In this case, we cannot straightforwardly express the series that would describe the formation law of our solution.

\begin{example} \label{example integral}
Let $A: L^p(0,1) \longrightarrow L^p(0,1)$ defined by
$$A \phi(x) = \displaystyle \int _{0}^{x} \phi (y) dy.$$

Clearly, $A$ does not satisfy \eqref{comentrei} in general, preventing a straightforward application of Theorem \ref{commute theorem}. Furthermore, it is not trivial to establish a well-understood formula for the solution to this problem. With that in mind, to facilitate our discussion, let us assume that
\begin{equation*}
\alpha(x) = \left\{\begin{array}{ll}
\alpha_1 , &\textrm{if } x \in(0, 1/2] \vspace*{0.2cm}, \\
\alpha_2 , &\textrm{if } x \in (1/2, 1),\end{array}\right.
\end{equation*}
for fixed, positive, and non-zero values of $\alpha_1$ and $\alpha_2$ in $(0,1]$. In this way, the problem can be split into two parts.\vspace*{0.2cm}

\underline{Case 1:} Given that the solution $u(t,x)$ already exists (see the conclusions of Theorem \ref{exisuni}), by restricting ourselves to values of $x \in (0, {1/2}]$, the problem can be reinterpreted as
\begin{equation}\label{202502052353}
\begin{array}{ll}
cD_t^{\alpha_1} u(t,x) = A_1 u(t,x), \quad \forall t > 0\textrm{ and a.e. }x \in (0, {1/2}],  \vspace*{0.2cm}\\
u(0,x) = u_0(x), \textrm{ a.e. }x \in (0, {1/2}], \end{array}
\end{equation}
with $A_1: L^p(0,1/2) \longrightarrow L^p(0,1/2)$ being defined by
$$A_1 \phi(x) = \displaystyle \int _{0}^{x} \phi (y) dy.$$

Since $A_1 \in \mathcal{L}(L^p(0,1/2))$, we can apply the classical theory of fractional abstract differential equations to \eqref{202502052353} and obtain
\begin{equation} \label{solucao 1 - exemplo integral}
u(t,x)=E_{\alpha_1}(t^{\alpha_1}A_1)u_0(x),
\end{equation}
for every $t\geq0$ and almost every $x \in (0, {1/2}]$.\vspace*{0.2cm}

\underline{Case 2:} Now that we have the formula for the solution $u(t,x)$ when $x\in(0,1/2]$, for $x\in({1/2}, 1)$, we can reformulate the original problem as
\begin{equation*}
 \begin{array}{ll}
cD_t^{\alpha_2} u(t,x) = \displaystyle \int_0^{{1/2}} E_{\alpha_1}(t^{\alpha_1}A_1)u_0(y) dy + \displaystyle A_2 u(t,x), \\
u(0,x) = u_0(x),
\end{array}
\end{equation*}
 for all $t > 0$ and a.e. $x \in ({1/2} , 1)$, where $A_2: L^{p}({1/2},1) \longrightarrow L^{p}({1/2},1)$ is given by
\begin{equation*}
A_2 \phi(x) := \displaystyle \int_{{1/2}}^{x} \phi(y) dy.
\end{equation*}

Again, since $A_2\in\mathcal{L}(L^p(1/2,1))$, we can use the fractional variation of constants formula to obtain that
\begin{multline} \label{solucao 2 - exemplo integral}
u(t, x) = E_{\alpha_2}(t^{\alpha_2}A_2)u_0(x) \\+ \displaystyle \int_0^t (t - s)^{\alpha_2 - 1} E_{\alpha_2, \alpha_2} \big((t - s)^{\alpha_2} A_2\big)\left[\int_0^{{1/2}} E_{\alpha_1}(s^{\alpha_1}A_1)u_0(y) dy\right]ds,
\end{multline}
for every $t\geq0$ and almost every $x \in (1/2,1)$.

Therefore, the solution to problem \eqref{eq1011} for the given operator $A$ is defined in parts, with the expressions \eqref{solucao 1 - exemplo integral} for $x \in (0, {1/2}]$ and \eqref{solucao 2 - exemplo integral} for $x \in ({1/2}, 1)$.
\end{example}

To conclude this section, we present an important discussion about the boundedness of the family of operators that defines the solution of \eqref{eq1011}. Recall that for any $\phi\in L^p(\Omega)$, Theorem \ref{exisuni} guarantees the uniqueness and existence of the solution to the Cauchy problem \eqref{eq1011}, which we denote here by $u(t,x;\phi)$. Consequently, we can define a family of bounded operators $\{S(t):t\geq0\}\subset\mathcal{L}(L^p(\Omega))$, given by 
\begin{equation}\label{matrixoperator}S(t)\phi(x):=u(t,x;\phi).\end{equation}

In the case where $\alpha(x)=\alpha$ is constant in $\Omega$, by recalling the formula in \eqref{classicalmittag} and the estimate \eqref{expmittag}, we can prove that
$$\|S(t)\|_{\mathcal{L}(L^p(\Omega))}=\|E_\alpha(t^\alpha A)\|_{\mathcal{L}(L^p(\Omega))}\leq E_\alpha(t^\alpha \|A\|_{\mathcal{L}(L^p(\Omega))})\leq\tilde{M}e^{\|A\|^{1/\alpha}_{\mathcal{L}(L^p(\Omega))}t},$$
for every $t\geq0$. In other words, we have demonstrated that the operator is exponentially bounded. This behavior plays a crucial role in computing the Laplace transform of the operator, as it provides valuable insights. Hence, it is essential to investigate whether a similar exponential boundedness can be achieved when $\alpha(x)$ varies with $x$.

To achieve this, first observe that if \eqref{comentrei} holds, then we can conclude that 
$$
S(t) = E_{\alpha(x)}\big(t^{\alpha(x)}A\big)u_0(x).
$$

Recalling the steps from the proof of Theorem \ref{convergentgeneral}, it becomes clear that proving the family $\{S(t):t\geq0\}$ is exponentially bounded is not straightforward. For example, when $t > 1$, the prove only gives us
$$
\|S(t)\|_{\mathcal{L}(L^p(\Omega))} \leq M e^{\|A\|_{\mathcal{L}(L^p(\Omega))}t^{\|\alpha\|_\infty/\alpha_0}},
$$
where $\|\alpha\|_\infty/\alpha_0 > 1$. However, in certain cases, exponential boundedness can be directly proven. This is the main topic addressed in the next result, which is also the last result of this section.

\begin{theorem} 
Assume that \eqref{comentrei} holds. If there exist subsets $\{\Omega_j\}_{j=1}^m \subset \Omega$ such that $\cup_{j=1}^m \Omega_j = \Omega$ and $\{\alpha_j\}_{j=1}^m \subset (0,1]$, with $\alpha(x) = \alpha_j$ for $x \in \Omega_j$, then there exist positive constants $\Lambda$ and $M$ such that the family of operators $\{S(t) : t \geq 0\}$, defined in \eqref{matrixoperator}, satisfies the inequality
$$
\|S(t)\|_{\mathcal{L}(L^p(\Omega))} \leq M e^{\Lambda t},
$$
for every $t \geq 0$.
\end{theorem}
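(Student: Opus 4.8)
The plan is to leverage hypothesis \eqref{comentrei}, which by Theorem \ref{commute theorem} forces the solution to coincide with the series \eqref{novaserie1}, so that $S(t)u_0(x)=E_{\alpha(x)}\big(t^{\alpha(x)}A\big)u_0(x)$ for every $t\ge 0$ and almost every $x\in\Omega$. The decisive new ingredient, compared with the discussion preceding the statement, is that $\alpha$ now assumes only \emph{finitely many} values; this is precisely what prevents the super-exponential growth coming from the ratio $\|\alpha\|_\infty/\alpha_0>1$, and it lets us reduce the estimate to the constant-order case on each piece $\Omega_j$.

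First I would reduce to a disjoint partition: replacing each $\Omega_j$ by $\Omega_j\setminus\bigcup_{i<j}\Omega_i$ (on which $\alpha$ still equals $\alpha_j$, since $\alpha$ is a well-defined function) we may assume the $\Omega_j$ are pairwise disjoint with $\bigcup_{j=1}^m\Omega_j=\Omega$. Then, for $x\in\Omega_j$ the exponent $\alpha(x)=\alpha_j$ is constant, so that
$$S(t)u_0(x)=\sum_{k=0}^\infty\frac{t^{\alpha_j k}(A^k u_0)(x)}{\Gamma(\alpha_j k+1)}=\big[E_{\alpha_j}\big(t^{\alpha_j}A\big)u_0\big](x),$$
where $E_{\alpha_j}(t^{\alpha_j}A)$ denotes the ordinary constant-order operator Mittag-Leffler function on $L^p(\Omega)$ (the series converging in $L^p(\Omega)$ because $A$ is bounded). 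In other words, on each $\Omega_j$ the solution agrees almost everywhere with the restriction of the constant-order solution $E_{\alpha_j}(t^{\alpha_j}A)u_0$.

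Next I would invoke the constant-order estimate recorded just before the statement, namely $\|E_{\alpha_j}(t^{\alpha_j}A)\|_{\mathcal{L}(L^p(\Omega))}\le \tilde{M}\,e^{\|A\|^{1/\alpha_j}_{\mathcal{L}(L^p(\Omega))}t}$, which follows from \eqref{classicalmittag} together with \eqref{expmittag}. Setting $\Lambda:=\max_{1\le j\le m}\|A\|^{1/\alpha_j}_{\mathcal{L}(L^p(\Omega))}$, a finite maximum, for $1\le p<\infty$ the disjointness of the pieces gives
\begin{multline*}
\|S(t)u_0\|_{L^p(\Omega)}^p=\sum_{j=1}^m\int_{\Omega_j}\big|[E_{\alpha_j}(t^{\alpha_j}A)u_0](x)\big|^p\,dx\\
\le\sum_{j=1}^m\big\|E_{\alpha_j}(t^{\alpha_j}A)u_0\big\|_{L^p(\Omega)}^p\le m\,\tilde{M}^p e^{p\Lambda t}\|u_0\|_{L^p(\Omega)}^p,
\end{multline*}
whence $\|S(t)\|_{\mathcal{L}(L^p(\Omega))}\le m^{1/p}\tilde{M}\,e^{\Lambda t}$; the case $p=\infty$ is handled identically, replacing the sum by the maximum of the essential suprema over the $\Omega_j$ and producing the constant $\tilde{M}$ instead of $m^{1/p}\tilde{M}$. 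Taking $M:=m^{1/p}\tilde{M}$ (respectively $\tilde{M}$) then completes the argument.

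I expect the only genuine subtlety — more conceptual than computational — to be the clean justification that the finiteness of the range of $\alpha$ is exactly what rescues exponential boundedness. Concretely, the identity $S(t)u_0(x)=\big[E_{\alpha_j}(t^{\alpha_j}A)u_0\big](x)$ must be read as an almost-everywhere equality between the space-dependent series \eqref{novaserie1} and the $L^p$-convergent constant-order series, and the finitely many resulting rates $\|A\|^{1/\alpha_j}_{\mathcal{L}(L^p(\Omega))}$ must be absorbed into a single $\Lambda$, a step that has no analogue when $\alpha$ ranges over a continuum.
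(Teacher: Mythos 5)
Your proposal is correct, but it takes a genuinely different route from the paper. The paper never partitions $\Omega$: it estimates the variable-order series \eqref{novaserie1} termwise, bounding the scalar coefficient globally via
\begin{equation*}
\left\|\frac{t^{\alpha(\cdot)k}}{\Gamma(\alpha(\cdot)k+1)}\right\|_{L^\infty(\Omega)}\leq\sum_{j=1}^m\frac{t^{\alpha_j k}}{\Gamma(\alpha_j k+1)},
\end{equation*}
and then sums over $k$ to obtain $\|S(t)\|_{\mathcal{L}(L^p(\Omega))}\leq\sum_{j=1}^m E_{\alpha_j}\big(t^{\alpha_j}\|A\|_{\mathcal{L}(L^p(\Omega))}\big)$, after which \eqref{expmittag} gives $M=m\max_j M_j$ and $\Lambda=\max_j\|A\|_{\mathcal{L}(L^p(\Omega))}^{1/\alpha_j}$. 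You instead disjointify the covering, identify $S(t)u_0$ almost everywhere on each piece $\Omega_j$ with the restriction of the constant-order solution $E_{\alpha_j}(t^{\alpha_j}A)u_0$, and add $p$-th powers of norms over the pieces. Both arguments rest on the same two pillars — Theorem \ref{commute theorem} and \eqref{expmittag} — and reduce matters to finitely many constant-order rates absorbed into one $\Lambda$; the difference is where the finiteness of the range of $\alpha$ is exploited (in the coefficient estimate for the paper, in the spatial decomposition for you). Your version buys a slightly sharper constant ($m^{1/p}\tilde M$ versus $m\max_j M_j$) and a more transparent structural picture: on each region the dynamics literally \emph{is} the constant-order dynamics. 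The paper's version buys robustness: since it never integrates over the individual $\Omega_j$, it needs neither their disjointness nor their measurability (the theorem hypothesis only says ``subsets''), and it treats all $1\leq p\leq\infty$ in one stroke. In your argument the measurability of the disjointified pieces should be secured explicitly — e.g.\ by replacing them with the preimages $\alpha^{-1}(\{\alpha_j\})$ of a measurable representative of $\alpha\in L^\infty(\Omega)$, after discarding repeated values $\alpha_j$ — and your a.e.\ identification step is correctly justified by noting that the partial sums of the two series coincide a.e.\ on $\Omega_j$, so their $L^p$ limits agree there; with those two remarks in place your proof is complete.
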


\begin{proof} 
Let $t>0$ and $k\in\mathbb{N}$. It is not difficult to notice that 
$$
\left\|\dfrac{t^{\alpha(x)k}}{\Gamma(\alpha(x)k+1)}\right\|_{L^\infty(\Omega)}\leq\sum_{j=1}^m\left[\dfrac{t^{\alpha_jk}}{\Gamma(\alpha_jk+1)}\right],
$$
for almost every $x\in\Omega$. Since Theorem \ref{commute theorem} ensures that 
$$
S(t)u_0(x) = E_{\alpha(x)}\big(t^{\alpha(x)}A\big)u_0(x),
$$ 
we have
\begin{align*}
\left\|S(t)u_0\right\|_{L^p(\Omega)} & \leq\sum_{k=0}^\infty\left\|\dfrac{t^{\alpha(\cdot)k}}{\Gamma(\alpha(\cdot)k+1)}\right\|_{L^\infty(\Omega)}
\|A\|_{\mathcal{L}(L^p(\Omega))}^k\|u_0\|_{L^p(\Omega)} \\
& \leq\sum_{k=0}^\infty\sum_{j=1}^m\left[\dfrac{t^{\alpha_jk}}{\Gamma(\alpha_jk+1)}\right]
\|A\|_{\mathcal{L}(L^p(\Omega))}^k\|u_0\|_{L^p(\Omega)},
\end{align*}
for every $t\geq0$, which directly implies that
$$
\left\|S(t)\right\|_{\mathcal{L}(L^p(\Omega))}\leq\sum_{j=1}^m E_{\alpha_j}(t^{\alpha_j} \|A\|_{\mathcal{L}(L^p(\Omega))}),
$$
for every $t\geq0$. Finally, by recalling \eqref{expmittag} we achieve the boundedness
$$
\left\|S(t)\right\|_{\mathcal{L}(L^p(\Omega))}\leq\sum_{j=1}^m M_je^{t\|A\|^{1/\alpha_j}_{\mathcal{L}(L^p(\Omega))}},
$$
for every $t\geq0$. From the above inequality, simply set 
\begin{multline*}
    M=m\max\big\{M_j:j\in\{1,\ldots,m\}\big\} \\ 
     \textrm{and} \quad \Lambda=\max\big\{\|A\|_{\mathcal{L}(L^p(\Omega))}^{1/\alpha_j}:j\in\{1,\ldots,m\}\big\},
\end{multline*}
to finally obtain
$$
\left\|S(t)\right\|_{\mathcal{L}(L^p(\Omega))}\leq Me^{\Lambda t},
$$
for every $t\geq0$.
\end{proof}

\section{Well-Posedness of the Semilinear Problem}
\label{last}

In addition to the hypotheses stated at the beginning of Section \ref{linearprob}, we introduce a function $f:[0, \infty)\times L^p(\Omega)\rightarrow L^p(\Omega)$ to formulate the semilinear fractional Cauchy problem for the space-dependent fractional evolution equation of order $\alpha(x)$, given by the equations
\begin{subequations}\label{eqsemilinear01}
  \begin{align}\label{eqsemilinear011}&cD_t^{\alpha(x)} u(t,x) = Au(t,x) + f(t, u(t, x)),&& \forall t>0\textrm{ and a.e. }x\in\Omega,\\
\label{eqsemilinear012}&u(0,x)=u_0(x),&& \textrm{ a.e. }x\in\Omega.
 \end{align}
\end{subequations}
Observe that the subsequent steps in this section would involve defining the notion of a solution and establishing the relationship between the solution of \eqref{eqsemilinear01} and the corresponding integral equation, much like what was achieved with Definition \ref{integralequ1} and Proposition \ref{integralequ} in the linear case. To avoid unnecessary repetition of proofs and arguments, while maintaining clarity for the reader, we skip certain proofs that follow the same reasoning already presented in Section \ref{linearprob}. This approach helps keep the section concise.

\begin{definition} We say that $\varphi\in C(I;L^p(\Omega))$ is solution of \eqref{eqsemilinear01} in $I\times\Omega$ if it satisfies:
\begin{itemize}
\item[(i)] $cD_t^{\alpha(x)} u(t,x) = Au(t,x) + f(t, u(t, x))$, for every $t\in I$ and almost every $x\in\Omega.$\vspace*{0.2cm}
\item[(ii)] $u(0,x)=u_0(x)$, for almost every $x\in\Omega.$
\end{itemize}
\end{definition}

Now we present a general version of Proposition \ref{integralequ}.
\begin{proposition}\label{intsemilinear}
Let $T>0$, $f \in C([0, \infty)\times L^p(\Omega);L^p(\Omega))$ and $\varphi$ a function such that $\varphi \in C([0,T];L^p(\Omega))$. Then $\varphi(t,x)$ is a solution of \eqref{eqsemilinear01} in $[0,T]\times\Omega$ if, and only if, $\varphi(t,x)$ satisfies the integral equation
\begin{multline*}
\varphi(t,x) = u_0(x) + \dfrac{1}{\Gamma(\alpha(x))}\int_{0}^t{(t-s)^{\alpha(x)-1}A\varphi(s,x)}\,ds \\
	+ \dfrac{1}{\Gamma(\alpha(x))}\int_{0}^t{(t-s)^{\alpha(x)-1}f(s,x, \varphi(s,x))}\,ds,
\end{multline*}
for every $t \in [0,T]$ and almost every $x \in \Omega$.
\end{proposition}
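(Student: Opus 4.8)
The plan is to reduce the statement to the linear case already settled in Proposition \ref{integralequ}. The only structural difference between the semilinear equation \eqref{eqsemilinear011} and the linear equation \eqref{eq101} is the extra nonlinear term, so the entire argument will transfer once I verify that the full right-hand side is a continuous $L^p(\Omega)$-valued function of $t$; this continuity is precisely what is required in order to invoke Propositions \ref{auxlemmaprev} and \ref{help01}.

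First I would establish that, for $\varphi\in C([0,T];L^p(\Omega))$, the map $t\mapsto f(t,\varphi(t,\cdot))$ belongs to $C([0,T];L^p(\Omega))$. Since $\varphi$ is continuous, the map $t\mapsto(t,\varphi(t,\cdot))$ is continuous from $[0,T]$ into $[0,\infty)\times L^p(\Omega)$; composing it with the continuous function $f$ yields the claimed continuity. Combining this with $A\in\mathcal{L}(L^p(\Omega))$, which forces $A\varphi\in C([0,T];L^p(\Omega))$, I would set
$$g(t,x):=A\varphi(t,x)+f(t,\varphi(t,\cdot))(x),$$
so that $g\in C([0,T];L^p(\Omega))$ and equation \eqref{eqsemilinear011} reads $cD_t^{\alpha(x)}\varphi(t,x)=g(t,x)$, which is exactly the form analyzed in Proposition \ref{integralequ}.

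For the forward implication, assuming $\varphi$ solves \eqref{eqsemilinear01}, I would apply the operator $J_t^{\alpha(x)}$ to both sides of $cD_t^{\alpha(x)}\varphi=g$ and use item $(iii)$ of Proposition \ref{help01} together with Proposition \ref{auxlemmaprev} to obtain $J_t^{\alpha(x)}[cD_t^{\alpha(x)}\varphi(t,x)]=\varphi(t,x)-\varphi(0,x)$; inserting the initial condition \eqref{eqsemilinear012} and expanding $g$ produces the desired integral equation. For the converse, assuming $\varphi$ satisfies that integral equation, I would recognize its right-hand side as $u_0(x)+J_t^{\alpha(x)}g(t,x)$, apply $cD_t^{\alpha(x)}$ to both sides, and invoke item $(i)$ of Proposition \ref{help01} (noting the Caputo derivative annihilates the constant term $u_0$) to recover $cD_t^{\alpha(x)}\varphi=g$, i.e. \eqref{eqsemilinear011}; the initial condition \eqref{eqsemilinear012} then follows from $J_t^{\alpha(x)}g\big|_{t=0}=0$, guaranteed by Proposition \ref{auxlemmaprev}.

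The only genuinely new point relative to the linear case is the continuity of the superposition term $t\mapsto f(t,\varphi(t,\cdot))$, and I expect this to be the main obstacle, albeit a mild one: it rests entirely on the joint continuity of $f$ and requires no Lipschitz or growth assumption at this stage. Once that continuity is secured, the problem is literally the linear problem with $A\varphi$ replaced by $g$, and every remaining step is an application of Propositions \ref{auxlemmaprev} and \ref{help01} verbatim, which is why it is reasonable to omit the repeated details as the authors indicate.
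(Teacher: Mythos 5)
Your proposal is correct and follows exactly the route the paper intends: the paper's own ``proof'' simply states that the hypotheses on $f$ allow one to adapt Proposition \ref{integralequ}, and you have supplied precisely that adaptation, with the one genuinely new ingredient---the continuity of $t\mapsto f(t,\varphi(t,\cdot))$ as a composition of continuous maps, so that $g(t,x)=A\varphi(t,x)+f(t,\varphi(t,\cdot))(x)$ lies in $C([0,T];L^p(\Omega))$ and Propositions \ref{auxlemmaprev} and \ref{help01} apply verbatim---correctly identified and justified. (You also rightly read the paper's $f(s,x,\varphi(s,x))$ in the statement as the notational slip it is, since $f$ takes only two arguments.)
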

\begin{proof}It is important to emphasize that the hypotheses on $f:[0, \infty) \times L^p(\Omega) \to L^p(\Omega)$ are sufficient for us to adapt the proof of Proposition \ref{integralequ} in order to establish this result. Therefore, as mentioned earlier, we have opted to omit these redundant arguments.
\end{proof}

To proceed, we introduce the definition of the locally Lipschitz notion considered in this work.
\begin{definition} \label{locallyLipschitz}
We say that $f:[0, \infty) \times L^p(\Omega)\rightarrow L^p(\Omega)$ is locally Lipschitz in the second variable, uniformly with respect to the first variable, if for each fixed $(t_0, \phi_0) \in [0, \infty) \times L^p(\Omega)$, there exist $r_0, L_0 > 0$ such that
\begin{equation*}
\| f (t, \psi)- f (t, \phi) \|_{L^p(\Omega)} \leq L_0 \| \psi-\phi \|_{L^p(\Omega)}, 
\end{equation*}
for every $(t, \psi),(t, \phi)\in B_{r_0}(t_0,\phi_0)$, where
$$B_{r_0}(t_0,\phi_0):=\big\{(s,\lambda)\in [0,\infty)\times L^p(\Omega):|s-t_0|+\|\lambda-\phi_0\|_{L^p(\Omega)}<r_0\big\}.$$
\end{definition}

Based on the notions and results established so far in this paper, we now present one of the main results of this section.
\begin{theorem}\label{semilinearexisuni} If $f \in C([0, \infty)\times L^p(\Omega);L^p(\Omega))$ is locally Lipschitz in the second variable, uniformly with respect to the first variable, then there exists $\tau_0>0$ such that \eqref{eqsemilinear01} has a unique solution in $[0,\tau_0]\times\Omega$.
\end{theorem}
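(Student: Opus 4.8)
The plan is to recast problem \eqref{eqsemilinear01} as a fixed-point problem for the integral operator furnished by Proposition \ref{intsemilinear}, and then apply the Banach fixed-point theorem on a small closed ball of $C([0,\tau_0];L^p(\Omega))$ centered at the constant function $u_0$. Concretely, one defines
\[
\mathcal{F}\varphi(t,x) := u_0(x) + \frac{1}{\Gamma(\alpha(x))}\int_0^t (t-s)^{\alpha(x)-1}\big[A\varphi(s,x) + f(s,\varphi(s,x))\big]\,ds,
\]
which is well-defined on $C([0,\tau_0];L^p(\Omega))$: indeed $A\varphi \in C([0,\tau_0];L^p(\Omega))$ because $A \in \mathcal{L}(L^p(\Omega))$, the map $t \mapsto f(t,\varphi(t,\cdot))$ lies in $C([0,\tau_0];L^p(\Omega))$ by continuity of $f$, and Proposition \ref{auxlemmaprev} guarantees that $J_t^{\alpha(x)}$ sends $C([0,\tau_0];L^p(\Omega))$ into itself with $\mathcal{F}\varphi\big|_{t=0}=u_0$.

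First I would fix the data for the Lipschitz estimate by applying Definition \ref{locallyLipschitz} at the point $(0,u_0)$, obtaining $r_0,L_0>0$ such that $\|f(t,\psi)-f(t,\phi)\|_{L^p(\Omega)} \le L_0\|\psi-\phi\|_{L^p(\Omega)}$ on $B_{r_0}(0,u_0)$. I would then set $r := r_0/2$ and consider the closed ball $\overline{B}_r := \{\varphi \in C([0,\tau_0];L^p(\Omega)) : \|\varphi - u_0\|_{C([0,\tau_0];L^p(\Omega))} \le r\}$, which is a complete metric space. The restriction $\tau_0 < r_0/2$ ensures that $(t,\varphi(t,\cdot))$ and $(t,u_0)$ both belong to $B_{r_0}(0,u_0)$ whenever $\varphi \in \overline{B}_r$ and $t \in [0,\tau_0]$, so that the Lipschitz bound applies along the iterates; moreover $\bar M_0 := \sup_{t \in [0,r_0/2]}\|f(t,u_0)\|_{L^p(\Omega)}$ is finite by continuity.

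The two required estimates both rest on the bound from Proposition \ref{auxlemmaprev}, which for $\psi \in C([0,\tau_0];L^p(\Omega))$ reads
\[
\big\|J_t^{\alpha(\cdot)}\psi(t,\cdot)\big\|_{L^p(\Omega)} \le \kappa(\tau_0)\,\|\psi\|_{C([0,\tau_0];L^p(\Omega))}, \qquad \kappa(\tau_0) := \frac{1}{\Gamma(\|\alpha\|_{L^\infty(\Omega)})}\left[\frac{\tau_0^{\|\alpha\|_{L^\infty(\Omega)}}}{\|\alpha\|_{L^\infty(\Omega)}} + \frac{\tau_0^{\alpha_0}}{\alpha_0}\right],
\]
whose decisive feature is that $\kappa(\tau_0) \to 0$ as $\tau_0 \to 0^+$ --- a property that hinges on $\alpha_0 > 0$ and on the $L^\infty(\Omega)$-control of the kernel $(t-s)^{\alpha(x)-1}/\Gamma(\alpha(x))$ provided by Theorem \ref{espfunc02}. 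For the self-mapping property I would estimate, for $\varphi \in \overline{B}_r$,
\[
\|\mathcal{F}\varphi(t,\cdot) - u_0\|_{L^p(\Omega)} \le \kappa(\tau_0)\Big[\|A\|_{\mathcal{L}(L^p(\Omega))}(\|u_0\|_{L^p(\Omega)}+r) + L_0 r + \bar M_0\Big],
\]
which is $\le r$ once $\tau_0$ is small; for the contraction property, the linearity of $A$ together with the Lipschitz bound gives
\[
\|\mathcal{F}\varphi_1(t,\cdot)-\mathcal{F}\varphi_2(t,\cdot)\|_{L^p(\Omega)} \le \kappa(\tau_0)\big(\|A\|_{\mathcal{L}(L^p(\Omega))}+L_0\big)\|\varphi_1-\varphi_2\|_{C([0,\tau_0];L^p(\Omega))},
\]
so choosing $\tau_0$ small enough that $\kappa(\tau_0)(\|A\|_{\mathcal{L}(L^p(\Omega))}+L_0) < 1$ makes $\mathcal{F}$ a contraction on $\overline{B}_r$. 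The Banach fixed-point theorem then yields a unique fixed point in $\overline{B}_r$, which by Proposition \ref{intsemilinear} is a solution of \eqref{eqsemilinear01}.

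The main obstacle is the local, rather than global, nature of the Lipschitz hypothesis: the estimate on $f$ is only available on the neighborhood $B_{r_0}(0,u_0)$, so $r$ and $\tau_0$ must be chosen simultaneously to keep every iterate inside it while still forcing both the self-mapping and contraction constants below the required thresholds --- all of this made possible by the smallness of $\kappa(\tau_0)$. A secondary point is upgrading the contraction's uniqueness (a priori only within $\overline{B}_r$) to uniqueness among all solutions in $C([0,\tau_0];L^p(\Omega))$: since any solution is continuous and equals $u_0$ at $t=0$, it remains in $\overline{B}_r$ on a possibly shorter interval and hence coincides there with the fixed point, after which a continuation argument --- or a fractional Gr\"onwall estimate using a uniform Lipschitz constant obtained by covering the compact trajectories --- propagates the agreement to all of $[0,\tau_0]$.
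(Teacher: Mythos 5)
Your proposal is correct and follows essentially the same route as the paper: both recast \eqref{eqsemilinear01} via Proposition \ref{intsemilinear} as a fixed-point problem and run the Banach contraction principle on a small closed ball around $u_0$ in $C([0,\tau_0];L^p(\Omega))$, with the self-mapping and contraction constants driven below threshold by the smallness of the kernel bound $\kappa(\tau_0)$ coming from Theorem \ref{espfunc02} and Proposition \ref{auxlemmaprev}. If anything, you are slightly more careful than the paper on two minor points it leaves implicit --- the contribution of the $Au_0$ term in the self-mapping estimate and the upgrade from uniqueness within the ball to uniqueness among all continuous solutions on $[0,\tau_0]$.
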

\begin{proof}
For the initial condition $(0,u_0)\in[0,\infty)\times L^p(\Omega)$, assume that $r_0,L_0 > 0$ and $B_{r_0}(0,u_0)$ are giving according to Definition \ref{locallyLipschitz}. Set the parameters
\begin{multline*}\beta\in(0,r_0),\quad M_0=\max\big\{\|f(s,u_0(\cdot))\|_{L^p(\Omega)}:s\in[0,r_0]\big\} \quad \textrm{and} \\
\tau_0 = \min \left\{r_0-\beta, \left[\dfrac{\alpha_0\Gamma(\|\alpha\|_{L^\infty(\Omega)})}{2\big(\| A \|_{\mathcal{L}(L^p(\Omega))}+L_0\big)}\right]^{1/\alpha_0}, \left[\dfrac{\beta\alpha_0\Gamma(\|\alpha\|_{L^\infty(\Omega)})}{2M_0+1}\right]^{1/\alpha_0}\right\}.\end{multline*}
Now, consider the set 
\begin{multline*}K_0:=\Big\{\varphi\in C([0, \tau_0];L^p(\Omega)):\varphi(0,x)=u_0(x)\textrm{ for a.e. }x\in\Omega\\\textrm{ and }\|\varphi(t,\cdot)-u_0(\cdot)\|_{L^p(\Omega)}\leq\beta,\textrm{ for every }t\in[0,\tau_0]\Big\},\end{multline*}
and $T:K_0\rightarrow C([0, \tau_0];L^p(\Omega))$ given by
\begin{multline*}
T\varphi(t,x) = u_0(x) + \dfrac{1}{\Gamma(\alpha(x))}\int_{0}^t{(t-s)^{\alpha(x)-1}A\varphi(s,x)}\,ds \\
	+ \dfrac{1}{\Gamma(\alpha(x))}\int_{0}^t{(t-s)^{\alpha(x)-1}f(s, \varphi(s,x))}\,ds.
\end{multline*}

Let us prove that $T(K_0)\subset K_0$. First, observe that if $\varphi\in K_0$, then, by Proposition \ref{auxlemmaprev}, we have $T\varphi(0,x)=u_0(x)$ for almost every $x\in\Omega$, and that $T\varphi\in C([0,\tau_0];L^p(\Omega))$. 

Moreover, since both $(s,u_0(x))$ and $(s,\varphi(s,x))$ belong to $B_{r_0}(0,u_0)$ for all $s\in[0,\tau_0]$ and almost every $x\in\Omega$, applying the estimates established in the proof of Theorem \ref{exisuni}, we obtain
\begin{multline*}
    \| T\varphi(t,\cdot) - u_0(\cdot) \|_{L^p(\Omega)} \leq \displaystyle   \left[{\dfrac{\tau_0^{\alpha_0}\Big(\| A \|_{\mathcal{L}(L^p(\Omega))}+L_0\Big)  }{\alpha_0\Gamma(\|\alpha\|_{L^\infty(\Omega)})}}\right]
    \|\varphi - u_0\|_{C([0,\tau];L^p(\Omega))}\\+\left[{\dfrac{\tau_0^{\alpha_0}M_0  }{\alpha_0\Gamma(\|\alpha\|_{L^\infty(\Omega)})}}\right]\leq\beta,
\end{multline*}
for every $t\in[0,\tau_0]$, as we wanted. Now, if $\varphi_1,\varphi_2\in K_0$, both $(s,\varphi_1(s,x))$ and $(s,\varphi_2(s,x))$ belong to $B_{r_0}(0,u_0)$ for all $s\in[0,\tau_0]$ and almost every $x\in\Omega$, therefore
\begin{equation*}
    \| T\varphi_1(t,\cdot) -T\varphi_2(t,\cdot) \|_{L^p(\Omega)} \leq \displaystyle   \underbrace{\left[{\dfrac{\tau_0^{\alpha_0}\Big(\| A \|_{\mathcal{L}(L^p(\Omega))}+L_0\Big)  }{\alpha_0\Gamma(\|\alpha\|_{L^\infty(\Omega)})}}\right]}_{\leq1/2}
    \|\varphi - u_0\|_{C([0,\tau];L^p(\Omega))},
\end{equation*}
for every $t\in[0,\tau_0]$.

Thus, by the Banach contraction principle, we conclude that $T$ has a unique fixed point in $K_0$, which completes the proof of the theorem.
\end{proof}

From this point onward, we focus on presenting results concerning extensions of the solution whose existence was established by the theorem above. Additionally, we aim to justify its maximal existence, in the sense that it defines the largest interval in which the solution remains valid. To this end, we first introduce some necessary concepts.

\begin{definition}
\begin{itemize}
\item[(i)] Let $\varphi:[0,{\tau}]\rightarrow L^p(\Omega)$ be the unique solution to \eqref{eqsemilinear01} on $[0,{\tau}]\times\Omega$. If ${\tau^*} > {\tau}$ and $\varphi^*:[0,{\tau^*}]\rightarrow L^p(\Omega)$ is a solution to \eqref{eqsemilinear01} on $[0,{\tau^*}]\times\Omega$, then we say that $\varphi^*$ is a continuation of $\varphi$ over $[0,{\tau^*}]\times\Omega$.
\item[(ii)] If $\varphi:[0,{{\tau^*}})\rightarrow L^p(\Omega)$ is the unique solution to \eqref{eqsemilinear01} on $[0,{\tau}]\times\Omega$ for all $\tau \in (0,\tau^*)$ and cannot be extended to $[0,{\tau}^*]\times\Omega$ while remaining a solution, then we call $\varphi$ a maximal solution to \eqref{eqsemilinear01} in $[0,\tau^*)\times \Omega$.
\end{itemize}
\end{definition}

\begin{lemma} \label{lemablowupsemilinear}
Let $f \in C([0, \infty) \times L^p(\Omega); L^p(\Omega))$ be locally Lipschitz in the second variable, uniformly with respect to the first variable. If $\varphi \in C([0, \tau]; L^p(\Omega))$ is a solution of \eqref{eqsemilinear01} on $[0, \tau]\times\Omega$, then there exists a unique continuation $\varphi^*$ of $\varphi$ to an extended interval $[0, \tau + \tau^*]\times\Omega$ for some $\tau^* > 0$.
\end{lemma}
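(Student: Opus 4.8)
The plan is to mimic the contraction-mapping strategy used in Theorem \ref{semilinearexisuni} and in the extension step of Theorem \ref{exisuni}, but now anchored at $t=\tau$ rather than at $t=0$. The essential new feature compared with the ODE setting is that the Caputo derivative has memory: by Proposition \ref{intsemilinear}, any continuation $\varphi^*$ must satisfy an integral equation whose kernel integrates over the whole history $[0,t]$, so I cannot simply restart a fresh Cauchy problem at $t=\tau$ with datum $\varphi(\tau)$. Instead I would keep the already-known values of $\varphi$ on $[0,\tau]$ as a fixed forcing term and solve only for the increment on $[\tau,\tau+\tau^*]$.

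Concretely, for $t\in[\tau,\tau+\tau^*]$ I split
\begin{multline*}
\varphi^*(t,x) = \underbrace{u_0(x) + \frac{1}{\Gamma(\alpha(x))}\int_0^{\tau}(t-s)^{\alpha(x)-1}\big[A\varphi(s,x)+f(s,\varphi(s,x))\big]\,ds}_{=:h(t,x)} \\ + \frac{1}{\Gamma(\alpha(x))}\int_{\tau}^{t}(t-s)^{\alpha(x)-1}\big[A\varphi^*(s,x)+f(s,\varphi^*(s,x))\big]\,ds,
\end{multline*}
where $h(t,x)$ is completely determined by $\varphi$. I would first verify, arguing exactly as in Proposition \ref{auxlemmaprev}, that $h\in C([0,\tau+\tau^*];L^p(\Omega))$; the only delicate point is the behavior of the kernel for $t$ slightly larger than $\tau$ and $s$ near $\tau$, but the singularity $(t-s)^{\alpha(x)-1}$ stays integrable thanks to items $(i)$--$(iii)$ of Theorem \ref{espfunc02}.

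Next I set up the fixed-point problem. Since the image $\{\varphi(s):s\in[0,\tau]\}$ is compact in $L^p(\Omega)$, the local Lipschitz hypothesis (Definition \ref{locallyLipschitz}) yields, after a finite-cover argument, a single radius and a single Lipschitz constant $L_0$ valid on a uniform tube around the graph of $\varphi$ for times in a neighborhood of $[0,\tau]$. I then introduce the complete metric space
$$H = \Big\{\psi\in C([0,\tau+\tau^*];L^p(\Omega)) : \psi=\varphi \text{ on } [0,\tau],\ \|\psi(t,\cdot)-\varphi(\tau,\cdot)\|_{L^p(\Omega)}\le \beta \text{ on } [\tau,\tau+\tau^*]\Big\},$$
with $\beta$ small enough that the tube is not exited, and define $T$ on $H$ by the integral equation of Proposition \ref{intsemilinear}. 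For $t\le\tau$ the constraint $\psi=\varphi$ forces $T\psi(t)=\varphi(t)$, so $T$ preserves the history; for $t\in[\tau,\tau+\tau^*]$ the invariance $\|T\psi(t)-\varphi(\tau)\|_{L^p(\Omega)}\le\beta$ (a continuity estimate at $t=\tau^+$) and the contraction bound
$$\|T\psi_1(t)-T\psi_2(t)\|_{L^p(\Omega)}\le \frac{\big(\|A\|_{\mathcal{L}(L^p(\Omega))}+L_0\big)}{\Gamma(\|\alpha\|_{L^\infty(\Omega)})}\,\frac{(\tau^*)^{\alpha_0}}{\alpha_0}\,\|\psi_1-\psi_2\|_{C([0,\tau+\tau^*];L^p(\Omega))}$$
both follow from bounding $\int_{\tau}^{t}(t-s)^{\alpha(x)-1}\,ds$ via Theorem \ref{espfunc02}, exactly as in Theorem \ref{exisuni}; crucially, since $\psi_1$ and $\psi_2$ coincide on $[0,\tau]$, only the increment integral $\int_\tau^t$ enters this estimate. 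Choosing $\tau^*\le 1$ small enough makes the constant at most $1/2$ and keeps $T$ mapping $H$ into itself, so the Banach fixed-point theorem produces a unique $\varphi^*\in H$, which is the desired continuation by Proposition \ref{intsemilinear}. Uniqueness on $[0,\tau+\tau^*]$ follows because any continuation coincides with $\varphi$ on $[0,\tau]$ by definition and, being continuous with value $\varphi(\tau)$ at $t=\tau$, lies in $H$ after shrinking $\tau^*$ so it cannot leave the ball of radius $\beta$. The main obstacle I anticipate is precisely this bookkeeping forced by the fractional memory — establishing continuity of the history term $h$ up to $t=\tau^+$ and confirming that the contraction depends only on the increment integral — rather than any genuinely new analytic difficulty beyond what Theorem \ref{espfunc02} and Proposition \ref{auxlemmaprev} already supply.
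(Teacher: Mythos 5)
Your proposal is correct and takes essentially the same route as the paper: both work in the complete set of functions pinned to $\varphi$ on $[0,\tau]$ and confined to a $\beta$-tube around $\varphi(\tau,\cdot)$ on $[\tau,\tau+\tau^*]$, apply the integral operator from Proposition \ref{intsemilinear}, and conclude via the Banach fixed-point theorem with the contraction constant controlled through Theorem \ref{espfunc02} exactly as in Theorems \ref{exisuni} and \ref{semilinearexisuni}. Your explicit isolation of the fixed history term $h$ (whose continuity at $t=\tau^+$ rests on item $(iv)$ of Theorem \ref{espfunc02}, as in Proposition \ref{auxlemmaprev}) is just the bookkeeping the paper compresses into ``following the same approach as in Theorem \ref{semilinearexisuni}.''
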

\begin{proof}
Let $\varphi$ be the solution to \eqref{eqsemilinear01} in $[0, \tau]\times\Omega$. Since $f$ is locally Lipschitz in the second variable, uniformly with respect to the first variable, for $(\tau,\varphi(\tau))\in[0,\infty)\times L^p(\Omega)$, assume that $r_\tau,L_\tau > 0$ and $B_{r}(\tau,\varphi(\tau))$ are giving according to Definition \ref{locallyLipschitz}. Set the parameters
\begin{multline*}\beta\in(0,r_\tau),\quad M_\tau=\max\big\{\|f(s,\varphi(\tau,\cdot))\|_{L^p(\Omega)}:s\in[0,r_\tau]\big\} \quad \textrm{and} \\ \tau^* = \min \left\{r_\tau-\beta, \left[\dfrac{\alpha_0\Gamma(\|\alpha\|_{L^\infty(\Omega)})}{2\big(\| A \|_{\mathcal{L}(L^p(\Omega))}+L_\tau\big)}\right]^{1/\alpha_0}, \left[\dfrac{\beta\alpha_0\Gamma(\|\alpha\|_{L^\infty(\Omega)})}{2L_\tau M_\tau+1\big)}\right]^{1/\alpha_0}\right\}.\end{multline*}
Now, consider the set 
\begin{multline*}K_\tau:=\Big\{\psi\in C([0, \tau+\tau^*];L^p(\Omega)):\psi(s,x)=\varphi(s,x)\textrm{ for every }t\in[0,\tau] \\
\textrm{ and a.e. }x\in\Omega \textrm{ and }\|\psi(t,\cdot)-\varphi(\tau,\cdot)\|_{L^p(\Omega)}\leq\beta,\textrm{ for every }t\in[\tau,\tau+\tau^*]\Big\},\end{multline*}
and $T:K_\tau\rightarrow C([0, \tau];L^p(\Omega))$ given by
\begin{multline*}
T\psi(t,x) = u_0(x) + \dfrac{1}{\Gamma(\alpha(x))}\int_{0}^t{(t-s)^{\alpha(x)-1}A\psi(s,x)}\,ds \\
	+ \dfrac{1}{\Gamma(\alpha(x))}\int_{0}^t{(t-s)^{\alpha(x)-1}f(s, \psi(s,x))}\,ds.
\end{multline*}

Following the same approach as in Theorem \ref{semilinearexisuni}, we prove that $T(K_\tau) \subset K_\tau$ and that $T$ is a contraction. Therefore, by the Banach contraction principle, we conclude that there exists a unique solution $\varphi^*$ on $[0, \tau + \tau^*]\times\Omega$ to \eqref{eqsemilinear01}, which is the unique continuation of $\varphi$ over $[0,\tau^*]\times\Omega$.
\end{proof}

We now present our final result, which concludes the theory developed here on semilinear differential equations involving the Caputo fractional derivative of order $\alpha(x)$ and bounded operators $A \in \mathcal{L}(L^p(\Omega))$. This result is fundamental in addressing the blow-up of maximal solutions within bounded intervals of $[0,\infty)$ or the global existence of solutions.

\begin{theorem} \label{blowupsemilinear}
Assume that $f \in C([0, \infty) \times L^p(\Omega); L^p(\Omega))$ is  locally Lipschitz in the second variable, uniformly with respect to the first variable, and maps bounded sets onto bounded sets. Then, either problem \eqref{eqsemilinear01} admits a global solution $\varphi\in C([0, \infty);L^p(\Omega))$, or there exists $\omega \in (0, \infty)$ such that $\varphi \in C([0,\omega);L^p(\Omega))$ is a maximal solution of \eqref{eqsemilinear01} on $[0,\omega)\times\Omega$ satisfying
$$\limsup_{t \rightarrow \omega^{-}} \| \varphi(t,\cdot) \|_{L^p(\Omega)} = \infty.$$
\end{theorem}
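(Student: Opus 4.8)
First, I would set $\omega$ to be the supremum of all $\tau > 0$ for which \eqref{eqsemilinear01} admits a solution on $[0,\tau]\times\Omega$. Theorem \ref{semilinearexisuni} guarantees $\omega > 0$, and by the uniqueness of solutions on overlapping intervals together with the continuation provided by Lemma \ref{lemablowupsemilinear}, the local solutions patch into a single maximal solution $\varphi \in C([0,\omega); L^p(\Omega))$. If $\omega = \infty$, the first alternative holds and there is nothing more to prove. Hence the remaining task is to treat the case $\omega < \infty$ and establish the blow-up of the $L^p(\Omega)$-norm.

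I would argue by contradiction: suppose $\limsup_{t\to\omega^-}\|\varphi(t,\cdot)\|_{L^p(\Omega)} < \infty$, so that there exists $R > 0$ with $\|\varphi(t,\cdot)\|_{L^p(\Omega)} \leq R$ for every $t\in[0,\omega)$. Since $A\in\mathcal{L}(L^p(\Omega))$ and $f$ maps bounded sets onto bounded sets, the function $g(s,x) := A\varphi(s,x) + f(s,\varphi(s,x))$ is bounded in $L^p(\Omega)$ uniformly on $[0,\omega)$; denote this bound by $C$. Using the integral equation of Proposition \ref{intsemilinear}, for $t_1 < t_2 < \omega$ I would write
$$\varphi(t_2,\cdot) - \varphi(t_1,\cdot) = \frac{1}{\Gamma(\alpha(\cdot))}\int_{t_1}^{t_2}(t_2-s)^{\alpha(\cdot)-1}g(s,\cdot)\,ds + \frac{1}{\Gamma(\alpha(\cdot))}\int_0^{t_1}\left[(t_2-s)^{\alpha(\cdot)-1}-(t_1-s)^{\alpha(\cdot)-1}\right]g(s,\cdot)\,ds,$$
and estimate each term in $L^p(\Omega)$ exactly as in the proof of Proposition \ref{auxlemmaprev}, invoking items $(i)$--$(iv)$ of Theorem \ref{espfunc02} to control the $L^\infty(\Omega)$-norm of the kernels uniformly in $x$. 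This yields a bound of the form $\|\varphi(t_2,\cdot)-\varphi(t_1,\cdot)\|_{L^p(\Omega)} \leq C\,\Phi(t_1,t_2)$, where $\Phi(t_1,t_2) \to 0$ as $t_1,t_2 \to \omega^-$. Consequently $\{\varphi(t,\cdot)\}$ is Cauchy in $L^p(\Omega)$ as $t\to\omega^-$, so the limit $\varphi(\omega,\cdot) := \lim_{t\to\omega^-}\varphi(t,\cdot)$ exists in $L^p(\Omega)$.

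With this limit available, the extended function belongs to $C([0,\omega]; L^p(\Omega))$, and passing to the limit in the integral equation (again using the continuity of $J_t^{\alpha(x)}$ established in Proposition \ref{auxlemmaprev}) shows that it satisfies the integral equation at $t=\omega$ as well; hence, by Proposition \ref{intsemilinear}, it is a solution of \eqref{eqsemilinear01} on the closed interval $[0,\omega]\times\Omega$. Applying Lemma \ref{lemablowupsemilinear} with $\tau=\omega$ then produces a continuation to $[0,\omega+\tau^*]\times\Omega$ for some $\tau^* > 0$, contradicting the definition of $\omega$ as the supremum of existence times. Therefore the assumption is untenable and $\limsup_{t\to\omega^-}\|\varphi(t,\cdot)\|_{L^p(\Omega)} = \infty$, which completes the dichotomy.

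The main obstacle I anticipate lies in the Cauchy estimate of the second paragraph: one must control the difference of two fractional integrals carrying the spatially varying exponent $\alpha(\cdot)-1$ uniformly over $x\in\Omega$, which is precisely the role played by item $(iv)$ of Theorem \ref{espfunc02}. Both integral pieces must be shown to vanish as $t_1,t_2\to\omega^-$ using only the uniform bound $C$ on $g$ together with the $L^\infty(\Omega)$-estimates for the kernel, mirroring the continuity computation already carried out in Proposition \ref{auxlemmaprev}; the hypothesis that $f$ maps bounded sets onto bounded sets is what furnishes the constant $C$ and is therefore indispensable.
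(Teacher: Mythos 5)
Your proposal is correct and follows essentially the same route as the paper: define $\omega$ as the supremum of existence times, dispose of the case $\omega=\infty$, and for $\omega<\infty$ assume boundedness of $\|\varphi(t,\cdot)\|_{L^p(\Omega)}$, use the bounded-sets hypothesis on $f$ to get a uniform bound on $A\varphi+f(\cdot,\varphi)$, show $\varphi(t,\cdot)$ is Cauchy as $t\to\omega^-$ via the kernel estimates of Theorem \ref{espfunc02}, extend to $[0,\omega]$, and contradict maximality through Lemma \ref{lemablowupsemilinear}. In fact your second paragraph spells out precisely the ``standard argument'' (the two-piece decomposition of $\varphi(t_2,\cdot)-\varphi(t_1,\cdot)$ controlled by items $(i)$--$(iv)$ of Theorem \ref{espfunc02}, mirroring Proposition \ref{auxlemmaprev}) that the paper states and omits.
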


\begin{proof}
According to Theorem \ref{semilinearexisuni}, the set
\begin{multline*}
    H:= \big\{ \tau \in [0, \infty) :\textrm{ exists }\varphi_{\tau} \in C(\left[ 0, \tau \right]; L^p(\Omega)) \\ \textrm{unique solution to }\eqref{eqsemilinear01}\textrm{ in }[0, \tau]\times\Omega \big\},
\end{multline*}
is not empty.

If we denote $\omega = \sup H$, we can define the continuous function $\varphi: [0, \omega) \rightarrow L^{p}(\Omega)$, which is also a solution to \eqref{eqsemilinear01} in $[0,\omega)\times\Omega$, by
$$\varphi(t,x)=\varphi_\tau(t,x),\textrm{ for }t\in[0,\tau]\textrm{ and a.e. }x\in\Omega.$$

\begin{itemize}
\item[(i)] If $\omega = \infty$, then $\varphi$ is a solution to \eqref{eqsemilinear01} on $[0, \infty)\times\Omega$, and there is nothing further to prove. 
\item[(ii)] If $\omega < \infty$, we shall need to prove that $\varphi$ does not admits a continuation and that 
\begin{equation}\label{202502101706}\limsup_{t \rightarrow \omega^{-}} \| \varphi(t,\cdot) \|_{L^p(\Omega)} = \infty.\end{equation}
\begin{itemize}
\item[(a)] Suppose, for the sake of contradiction, that $\varphi$ can be extended to $[0,\omega]\times\Omega$ while remaining a solution to \eqref{eqsemilinear01} on $[0,\omega]\times\Omega$. Then, by Lemma \ref{lemablowupsemilinear}, there exists $\tau^* > 0$ such that $\varphi^*:[0,\omega+\tau^*] \to L^p(\Omega)$ is the unique continuation of $\varphi$ on $[0,\omega+\tau^*]\times\Omega$. 

However, this implies that $\omega + \tau^* \in H$, leading to the contradiction $\omega + \tau^* \leq \omega$, or equivalently, $\tau^* \leq 0$, which is impossible. Therefore, $\varphi$ is a maximal solution to \eqref{eqsemilinear01} on $[0,\omega)\times\Omega$.

\item[(b)] Assume that \eqref{202502101706} does not hold. Then, there exists a constant $K > 0$ such that
$$\| \varphi(t,\cdot) \|_{L^p(\Omega)} \leq K, \quad \text{for all } t \in [0,\omega).$$
Since the set $\{(t, \varphi(t,\cdot)) : t \in [0,\omega)\}$ is bounded in $[0,\infty) \times L^p(\Omega)$, we define
$$S = \sup_{t \in [0, \omega)} \| f(t, \varphi(t, \cdot)) \|_{L^p(\Omega)}.$$

Now, consider a sequence $\{ t_n \}_{n \in \mathbb{N}} \subset [0, \omega)$ such that $t_n \to \omega$. By Theorem \ref{espfunc02}, and using estimates similar to those in Theorem \ref{semilinearexisuni}, it follows that $\{ \varphi(t_n,\cdot) \}_{n \in \mathbb{N}}$ is a Cauchy sequence in $L^p(\Omega)$. Consequently, $\varphi$ can be extended to $[0, \omega]$ in a well-defined manner, satisfying \eqref{intsemilinear} for all $t \in [0, \omega]$ and almost every $x \in \Omega$. Since this follows from a standard argument, we omit the details.

Furthermore, by Lemma \ref{blowupsemilinear}, the solution can be extended to a larger interval, contradicting the definition of $\omega$. Therefore, if $\omega < \infty$, the contradiction above implies that \eqref{202502101706} must hold, which completes the proof.

\end{itemize}
\end{itemize}
\end{proof}


\section{Application of the Theory}
\label{applications}
In this section, we outline the reasoning behind adapting certain mathematical models for analysis within the theoretical framework developed in this paper. As a main motivational example, we consider the SIR (Susceptible, Infected, Recovered) model, a widely used framework for describing disease transmission dynamics.

\subsection{The Classical SIR Model}
The SIR model provides a mathematical structure for analyzing how infection and recovery processes shape pathogen spread within a population, offering insights into the underlying mechanisms driving epidemic dynamics.

The model consists of three key compartments that describe the population dynamics:

\begin{itemize}
    \item[(i)] Susceptible, \( S(t) \): individuals who are at risk of infection;
    \item[(ii)] Infected, \( I(t) \): individuals who are infected and capable of transmitting the disease;
    \item[(iii)] Recovered, \( R(t) \): individuals who have recovered or been removed from the infectious population and are no longer susceptible.
\end{itemize}

The system of nonlinear differential equations governing the SIR model are
\begin{equation}\label{202502061253}\begin{array}{ll}
S^\prime(t) = -\beta S(t) I(t),\\
I^\prime(t) = \beta S(t) I(t) - \gamma I(t),\\
R^\prime(t) = \gamma I(t),
\end{array}
\end{equation}
where $\beta$ is the transmission rate, representing the probability that an interaction between a susceptible individual and an infected individual results in a new infection and $\gamma$ is the recovery rate, which determines the rate at which infected individuals recover and transition into the recovered compartment.

\subsection{Addressing Limitations of the Classical SIR Model}
It is important to note that the SIR model \eqref{202502061253} assumes that disease transmission occurs only through direct, local interactions between individuals. This simplification neglects the role of population movement across different regions, which is a crucial factor in real-world epidemic dynamics. As a result, the model may fail to capture the full complexity of disease spread, particularly in highly connected urban environments.

Understanding the environmental factors that influence the spread of infectious diseases is essential for developing effective prevention strategies, as highlighted by \cite{yin}. A key question posed by \cite{gerritse} is whether cities themselves accelerate COVID-19 transmission, or whether the movement of individuals between them plays a more significant role in the spread of pandemics.

Several studies provide insights into this issue. According to \cite{yin}, population density in urban areas of Hubei province and across the 51 states and territories of the USA correlates with higher morbidity rates, particularly in regions severely affected by COVID-19. However, \cite{gerritse} suggests that transmission dynamics differ significantly between urban centers and other locations, raising the question of whether connectivity, rather than pure population density, is the primary driver of disease dissemination. Some studies argue that densely populated areas facilitate disease transmission due to increased interpersonal contact, while others emphasize that human mobility and transport networks play a dominant role.

These observations indicate that a more realistic epidemiological model should incorporate both spatial interactions and human mobility. 


\subsection{A Fractional SIR Model for Epidemic Dynamics}

The classical SIR model assumes that individuals transition between compartments (Susceptible, Infected, and Recovered) at fixed rates, without accounting for memory effects in infection and recovery. However, real-world disease transmission often deviates from this Markovian assumption. As highlighted by \cite{angstmann2016fractional,angstmann2017}, the probability of remaining infected or recovering frequently depends on the elapsed time since infection, leading to more complex, time-dependent behaviors that cannot be captured by standard integer-order models.

A key motivation for incorporating fractional derivatives in epidemiological modeling arises from stochastic processes governing disease dynamics. \cite{angstmann2016fractional,angstmann2017} derive a fractional-order SIR model using a continuous-time random walk (CTRW) framework, demonstrating that when infectivity follows a power-law distribution, the resulting equations naturally take the form of fractional differential equations. This approach introduces a memory effect, meaning that infection and recovery processes depend not only on the present state but also on past states of the system, leading to a more realistic characterization of disease transmission.

Empirical studies on diseases such as COVID-19 and Ebola further support the necessity of fractional-order modeling, as infection dynamics vary based on how long an individual has been infected. Fractional derivatives provide a flexible mathematical framework for incorporating these effects. In particular, the Caputo fractional derivative is widely used in epidemiology due to its ability to retain historical infection data while preserving classical boundary conditions. This enables a more accurate representation of delayed transmission, variable immunity effects, and heterogeneous recovery rates.


\subsection{A Generalized SIR Model with Fractional Differentiation Depending on Spatial Position}

In contexts where mobility and connectivity are crucial, even fractional SIR models often fail to capture the full complexity of disease spread. While these models introduce memory effects in infection and recovery, they still assume local transmission, neglecting spatial interactions. However, real-world epidemics are strongly influenced by population mobility, where individuals interact across multiple regions, enabling non-local transmission. A more realistic epidemiological model should account for both non-local transmission and region-dependent infection dynamics.

To address this, we incorporate spatial interactions into the infection process. A natural approach is to introduce an integral term in the infection equation, allowing the infection rate at position $x$ to depend on the spatial distribution of infected individuals rather than solely local interactions. Recent models have employed fractional reaction–diffusion equations to more accurately represent non-local transmission mechanisms, capturing spatial heterogeneity and aligning with real-world disease spread \cite{lu2022}. In contrast, our approach introduces a distinct framework.

Mathematically, we introduce the following spatially dependent infection equation:
\begin{equation}\label{202502071508}
\frac{\partial I(t, x)}{\partial t} = \beta S(t, x) \int_0^x I(t, y) \, dy - \gamma I(t, x).
\end{equation}
Here, the integral term 
$$\int_0^x I(t, y) \, dy,$$
represents the cumulative number of infected individuals over the spatial domain. This formulation ensures that the local infection rate accounts for prior spatial locations where interactions have occurred, incorporating movement-based transmission effects. Such an approach is particularly relevant in urban environments, where movement between neighborhoods or transport hubs significantly affects disease spread.

While changing the second equation of \eqref{202502061253} to \eqref{202502071508} improves the model by introducing non-local disease transmission, it does not fully account for memory effects in different regions. In particular, infection and recovery rates may vary across space due to differences in population density, healthcare availability, and social behaviors. To address this limitation, we introduce a fractional time derivative with order depending on the spatial position $x$, capturing region-dependent memory effects.

The introduction of a space-dependent fractional derivative is justified by its capability to model heterogeneous infection dynamics across different spatial regions. As demonstrated in \cite{angstmann2016fractional}, fractional derivatives naturally appear in disease models governed by anomalous diffusion or power-law waiting times. Extending these models to spatially heterogeneous systems allows us to incorporate region-dependent memory effects, where past infections influence disease progression differently in each location.

With this in mind, we now propose the following generalized fractional SIR model:
\begin{equation}\label{202502071511}
\begin{array}{ll}
cD_t^{\alpha(x)}S(t, x) = -\beta S(t, x) I(t, x),\vspace*{0.2cm}\\
cD_t^{\alpha(x)} I(t, x) = \beta S(t, x) \displaystyle\int_0^x I(t, y) \, dy - \gamma I(t, x),\vspace*{0.2cm}\\
cD_t^{\alpha(x)}R(t, x) = \gamma I(t, x),
\end{array}
\end{equation}
where \( t\in(0,\infty) \), \( x\in[0,1] \) (representing the population's position), and \( cD_t^{\alpha(x)} \) is the Caputo fractional derivative of order \( \alpha(x) \) introduced in Section \ref{fracintro}.

Therefore, given an initial condition, Theorems \ref{semilinearexisuni} and \ref{blowupsemilinear} provide sufficient conditions to guarantee the existence and uniqueness of a solution to \eqref{202502071511}, while also ensuring its global existence. We emphasize that, to avoid unnecessary lengthening of the paper, we do not explicitly carry out these computations. However, these results follow from well-known arguments and can be readily obtained.


\subsection{Final Conclusions}

The proposed formulation \eqref{202502071511} introduces three key modifications to the classical SIR model:
\begin{itemize}
    \item[(i)] Spatially varying memory effects: Different regions exhibit distinct rates of infection progression. For instance, densely populated urban centers may experience accelerated transmission, whereas rural areas, with lower contact rates, may undergo delayed exposure.
    \item[(ii)] Non-local transmission dynamics: The integral term ensures that infection rates depend not only on local interactions but also on cumulative exposure across space, better capturing real-world epidemic patterns.
    \item[(iii)] Coupling spatial integration with fractional differentiation: This combination allows for a more realistic representation of epidemic dynamics, where disease transmission is influenced by both historical exposure and spatial movement \cite{angstmann2017,lu2022}. Additionally, incorporating a space-dependent fractional order \( \alpha(x) \) enhances the model’s ability to capture regional variations in infection and recovery rates, making it more accurate for heterogeneous populations.
\end{itemize}

This formulation also opens several new research directions, including:
\begin{itemize}
    \item[(i)] Analyzing the stability and asymptotic behavior of solutions under different assumptions for \( \alpha(x) \).
    \item[(ii)] Exploring optimal control strategies for epidemic mitigation in heterogeneous populations.
    \item[(iii)] Developing numerical schemes tailored to solve fractional differential equations with space-dependent derivatives.
    \item[(iv)] Validating the model against real epidemiological data to assess its predictive capabilities.
\end{itemize}

The integration of spatial heterogeneity and fractional differentiation in epidemiological modeling marks a significant step toward a more comprehensive understanding of disease propagation. By capturing memory effects and non-local transmission mechanisms, the proposed framework bridges the gap between classical compartmental models and complex real-world dynamics. Future investigations should focus on refining the model’s numerical implementation, analyzing its stability under various epidemiological scenarios, and validating its applicability through empirical data. As fractional modeling continues to gain traction in applied sciences, this approach offers a promising avenue for improving epidemic forecasting and intervention strategies.


\end{document}